\newcounter{subeqn} %
\tikzset{double line with arrow/.style args={#1,#2}{decorate,decoration={markings,%
mark=at position 0 with {\coordinate (ta-base-1) at (0,1pt);
\coordinate (ta-base-2) at (0,-1pt);},
mark=at position 1 with {\draw[#1] (ta-base-1) -- (0,1pt);
\draw[#2] (ta-base-2) -- (0,-1pt);
}}}}
\tikzset{
    labl/.style={anchor=south, rotate=90, inner sep=.5mm}
}
\newtheorem{Theorem}{Theorem}[section]
\newtheorem{Proposition}[Theorem]{Proposition}
\newtheorem{Lemma}[Theorem]{Lemma}
\newtheorem{Corollary}[Theorem]{Corollary}	
\newtheorem{Conjecture}[Theorem]{Conjecture}
\newtheorem{Definition}[Theorem]{Definition}
\newtheorem{Example}[Theorem]{Example}
\newtheorem{Remark}[Theorem]{Remark}
\newtheorem{TheoremA}{Theorem}
\newcommand{\nc}{\newcommand}
\newcommand{\renc}{\renewcommand}
\nc{\fa}{\mathfrak a}
\nc{\fb}{\mathfrak b}
\nc{\fg}{\mathfrak g}
\nc{\fk}{\mathfrak k}
\nc{\fh}{\mathfrak h}
\nc{\ft}{\mathfrak t}
\nc{\fw}{\mathfrak w}
\nc{\fM}{\mathfrak M}
\nc{\CC}{\mathbb{C}}
\nc{\GG}{\mathbb{G}}
\nc{\KK}{\mathbb{K}}
\nc{\NN}{\mathbb{N}}
\nc{\PP}{\mathbb{P}}
\nc{\QQ}{\mathbb{Q}}
\nc{\RR}{\mathbb{R}}
\nc{\ZZ}{\mathbb{Z}}
\nc{\cA}{\mathcal{A}}
\nc{\cB}{\mathcal{B}}
\nc{\cC}{\mathcal{C}}
\nc{\cD}{\mathcal{D}}
\nc{\cE}{\mathcal{E}}
\nc{\cF}{\mathcal{F}}
\nc{\cG}{\mathcal{G}}
\nc{\cI}{\mathcal{I}}
\nc{\cJ}{\mathcal{J}}
\nc{\cK}{\mathcal{K}}
\nc{\cL}{\mathcal{L}}
\nc{\cM}{\mathcal{M}}
\nc{\cN}{\mathcal{N}}
\nc{\cO}{\mathcal{O}}
\nc{\cP}{\mathcal{P}}
\nc{\cQ}{\mathcal{Q}}
\nc{\cR}{\mathcal{R}}
\nc{\cS}{\mathcal{S}}
\nc{\cT}{\mathcal{T}}
\nc{\cU}{\mathcal{U}}
\nc{\cV}{\mathcal{V}}
\nc{\cW}{\mathcal{W}}
\nc{\cX}{\mathcal{X}}
\nc{\cY}{\mathcal{Y}}
\nc{\cZ}{\mathcal{Z}}
\nc{\ba}{\mathbf{a}}
\nc{\bb}{\mathbf{b}}
\nc{\bc}{\mathbf{c}}
\nc{\bd}{\mathbf{d}}
\nc{\bg}{\mathbf{g}}
\nc{\bi}{\mathbf{i}}
\nc{\bj}{\mathbf j}
\nc{\bk}{\mathbf k}
\nc{\bm}{\mathbf{m}}
\nc{\br}{\mathbf{r}}
\nc{\bs}{\mathbf{s}}
\nc{\bt}{\mathbf{t}}
\nc{\bv}{\mathbf{v}}
\nc{\bw}{\mathbf{w}}
\nc{\bz}{\mathbf{z}}
\nc{\bA}{\mathbf A}
\nc{\bB}{\mathbf B}
\nc{\bF}{\mathbf F}
\nc{\bG}{\mathbf G}
\nc{\bL}{\mathbf{L}}
\nc{\bM}{\mathbf{M}}
\nc{\bN}{\mathbf{N}}
\nc{\bP}{\mathbf{P}}
\nc{\bR}{\mathbf{R}}
\nc{\bS}{\mathbf{S}}
\nc{\bT}{\mathbf{T}}
\nc{\bU}{\mathbf{U}}
\nc{\bX}{\mathbf{X}}
\nc{\bY}{\mathbf{Y}}
\nc{\sB}{\mathscr{B}}
\nc{\sC}{\mathscr{C}}
\nc{\Ann}{\operatorname{Ann}}
\nc{\Aut}{\operatorname{Aut}}
\nc{\Coker}{\operatorname{Coker}}
\nc{\Der}{\operatorname{Der}}
\nc{\End}{\operatorname{End}}
\nc{\Ext}{\operatorname{Ext}}
\nc{\fmod}{\operatorname{--fmod}}
\nc{\gr}{\operatorname{gr}}
\nc{\Hom}{\operatorname{Hom}}
\renc{\Im}{\operatorname{Im}}
\nc{\Ind}{\operatorname{Ind}}
\nc{\Ker}{\operatorname{Ker}}
\nc{\MaxSpec}{\operatorname{MaxSpec}}
\nc{\Mod}{\operatorname{Mod}}
\nc{\op}{\operatorname{op}}
\nc{\Rep}{\operatorname{Rep}}
\nc{\Res}{\operatorname{Res}}
\nc{\Span}{\operatorname{Span}}
\nc{\Spec}{\operatorname{Spec}}
\nc{\Sym}{\operatorname{Sym}}
\nc{\Tor}{\operatorname{Tor}}
\nc{\tr}{\operatorname{tr}}
\nc{\val}{\operatorname{val}}
\nc{\wt}{\operatorname{wt}}
\nc{\arxiv}[1]{\href{http://arxiv.org/abs/#1}{\tt arXiv:\nolinkurl{#1}}}
\nc{\Cartan}{\CC[H_\bullet^{(\bullet)}]}
\nc{\con}{\sim}
\nc{\diam}{\diamond}
\nc{\eps}{\varepsilon}
\nc{\fsl}{\mathfrak{sl}}
\nc{\flavour}{G_W}
\nc{\gauge}{G_V}
\nc{\GL}{\operatorname{GL}}
\nc{\Gr}{\mathsf{Gr}}
\nc{\Grml}{\Gr_\mu^{\overline{\lambda}}}
\nc{\hh}{\hslash}
\nc{\id}{\operatorname{id}}
\nc{\leftexp}[2]{\vphantom{#2}^{#1} #2}
\nc{\Lie}{\operatorname{Lie}}
\nc{\nilHecke}{\mathcal{NH}_\bG}
\nc{\sslash}{/\mkern-6mu/}
\nc{\ssslash}{/\mkern-6mu/\mkern-6mu/}
\nc{\Stab}{\mathsf S}
\nc{\hooklongrightarrow}{\lhook\joinrel\longrightarrow}
\nc{\hooklongleftarrow}{\longleftarrow\joinrel\rhook}
\nc{\twoheadlongrightarrow}{\relbar\joinrel\twoheadrightarrow}
\nc{\twoheadlongleftarrow}{\twoheadleftarrow\joinrel\relbar}
\nc{\acom}[1]{\todo[inline,color=green!20]{ Alex: #1 }}
\nc{\CB}{\cA^{\mathsf{sph}}}
\nc{\ICB}{\cA}
\nc{\ACB}{\cA^{\mathsf{ab}}}
\nc{\tCB}{\widetilde{\ICB}^{\mathsf{sph}}}
\nc{\tICB}{\widetilde{\ICB}}
\nc{\CBone}{\CB_{\hbar=1}}
\nc{\ICBone}{\ICB_{\hbar=1}}
\nc{\ACBone}{\ACB_{\hbar=1}}
\nc{\GK}{\bG((z))}
\nc{\GO}{\bG[[z]]}
\nc{\TK}{\bT((z))}
\nc{\TO}{\bT[[z]]}
\nc{\Iwa}{\cI}
\nc{\NK}{\bN((z))}
\nc{\NO}{\bN[[z]]}
\nc{\la}{\lambda}
\nc{\Ya}{{\reflectbox{\rm R}}}
\nc{\Yml}{Y_\mu^\lambda}
\nc{\FYml}{FY_\mu^\lambda}
\nc{\barX}{\bar{X}}
\nc{\barQ}{\bar{Q}}
\nc{\barP}{\bar{P}}
\nc{\excise}[1]{}
\nc{\Pol}{\mathsf{Pol}}
\nc{\PolKLR}{\mathsf{Pol}}
\nc{\yz}{z}
\nc{\YZ}{Z}
\nc{\ev}{\operatorname{ev}}
\nc{\basW}{\eps}
\nc{\basV}{e}
\nc{\DEF}{:=}
\nc{\idem}{\mathsf e}
\nc{\idemCB}{\idem}
\nc{\eV}{w}
\nc{\eW}{z}
\nc{\subrep}[1]{U_{#1}}
\nc{\catT}[1]{\cT_{#1}}
\nc{\catR}[2]{{_{#1} \cR_{#2}}}
\nc{\CCB}[2]{{_{#1}\ICB_{#2}}}
\nc{\unit}{\mathbf{1}}
\nc{\sfu}{\mathsf{u}}
\nc{\shV}{\sfu}
\nc{\dom}{\ZZ^{\bv}_{\textsf{dom}}}
\nc{\laG}{{\boldsymbol{\lambda}}}
\nc{\muG}{{\boldsymbol{\mu}}}
\nc{\laQ}{\lambda}
\nc{\muQ}{\mu}
\nc{\Ymu}{Y_\mu[\eW_1,\ldots,\eW_N]}
\nc{\sD}{\mathscr{D}}
\nc{\bimIK}{\mathcal{P}} 
\nc{\bimKI}{\mathcal{Q}} 
\nc{\Inc}{\mathsf{Inc}}
\nc{\Av}{\mathsf{Av}}
\nc{\objs}{\bt_\RR^{\circ}}
\nc{\paths}{\bt_\RR^{\bullet}}
\nc{\catya}{\sB^\bv}
\nc{\morphism}{\varphi}
\title{Generators for Coulomb branches of quiver gauge theories}
\author{Alex Weekes}
\date{}
\begin{document}
\begin{abstract}
We study the Coulomb branches of $3d$ $\cN=4$ quiver gauge theories, focusing on the generators for their quantized coordinate rings.  We  show that there is a surjective map from a shifted Yangian onto the quantized Coulomb branch, once the deformation parameter is set to $\hbar =1$.  In finite ADE type, this extends to a surjection over $\CC[\hbar]$.  We also show that these algebras are generated by the dressed minuscule monopole operators, for an arbitrary quiver.   Finally, we describe how the KLR Yangian algebra from \cite{KTWWY2} is related to Webster's extended BFN category.  This paper provides proofs for two results which were announced in \cite{KTWWY2}.
\end{abstract}
\maketitle


\section{Introduction}

Let $\bG$ be a connected reductive algebraic group over $\CC$, and $\bN$ a complex representation of $\bG$.  Recently, Braverman, Finkelberg and Nakajima have given a mathematical construction of the Coulomb branch  $\cM_C$ of the $3d$ $\cN =4$ gauge theory associated to $(\bG, \bN)$ \cite{BFN1}, building upon \cite{Nak6}.  $\cM_C$ is an important space studied by physicists, which had escaped a rigourous mathematical definition in any generality.  The theory of Coulomb branches has interesting physical and mathematical applications, for example in symplectic duality \cite{Web3}, \cite{BDGH}.  

The BFN construction realizes $\cM_C$ as an affine algebraic variety, by constructing its coordinate ring as a certain convolution algebra.  Their construction also naturally yields a deformation quantization $\CB$  of the coordinate ring of $\cM_C$, over $\CC[\hbar] = H_{\CC^\times}^\ast(pt)$.  It is interesting to relate these algebras with other more familiar ones, as for example has been done for some Cherednik algebras \cite{KN}, \cite{Web4}, \cite{BEF}.  To do so in general one needs to get a handle on generators, and possibly relations.  An important source for generators are the monopole operators $M_{\laG, f}$ coming from physics; we note that the related {monopole formula} \cite{CHZ} was a primary motivation in Nakajima's foundational work \cite{Nak6}.

In this paper we study the case of {quiver gauge theories}, and their connection with {shifted Yangians}, following \cite{BFN2}.  For a simple quiver (that is, having neither loops nor multiple edges) with vertex set $I$, we fix two $I$--graded vector spaces $W = \bigoplus_{i\in I}W_i$ and $V = \bigoplus_{i\in I} V_i$. To this data we associated a pair $(\bG, \bN)$ by
\begin{align}
\bG &\DEF \prod_{i\in I} \GL(V_i) \label{eq: G} \\
\bN &\DEF \bigoplus_{i\rightarrow j} \Hom_\CC(V_i, V_j) \oplus \bigoplus_{i} \Hom_\CC(W_i, V_i)  \label{eq: N}
\end{align}
We think of $\bG$ as an algebraic group over $\CC$, and $\bN$ its representation in the natural way.  Note that the corresponding {Higgs branch}, meaning the Hamiltonian reduction $T^\ast  \bN \ssslash \bG$, is a Nakajima quiver variety as originally defined in \cite{Nak3}.

If the underlying graph of the quiver is a Dynkin diagram of ADE type, then the Coulomb branch corresponding to $(\bG, \bN)$ is isomorphic to a generalized affine Grassmannian slice $\overline{\cW}_\mu^\lambda$ by \cite[Theorem 3.10]{BFN2} (we ignore a diagram automorphism that was included there).  This is a particularly satisfying result in that it provides a candidate definition for these slices outside of finite type: as Coulomb branches.  See for example \cite{F} for more details on this perspective.

Another approach to affine Grassmannian slices and their quantizations was introduced in work of Kamnitzer, Webster, Yacobi, and the author \cite{KWWY}, via algebras $Y_\mu^\lambda$ called {truncated shifted Yangians}.  This Yangian approach was related with Coulomb branches in \cite[Section 6.8]{BDG}, and further studied in \cite[Appendix B]{BFN2}.  In particular, in finite ADE type with $\mu$ dominant, an isomorphism of algebras was proved in \cite[Corollary B.28]{BFN2}.  In the present paper we extend this result:

\begin{TheoremA} 
\label{thm A}
Let $Q$ be any simple quiver, and $W, V$ as above.
\begin{enumerate}
\item[(a)]  There is an isomorphism of $\CC$--algebras $ Y_\mu^\lambda \cong \CBone$, where $Y_\mu^\lambda$ is a truncated shifted Yangian of type $\fg_Q$, the simply-laced Kac-Moody Lie algebra underlying $Q$.

\item[(b)] In finite ADE type, there is an isomorphism of graded $\CC[\hbar]$--algebras $\mathbf{Y}_\mu^\lambda \cong \CB$ for any $\mu$.
 
\end{enumerate}
\end{TheoremA}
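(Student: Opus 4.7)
The plan is to prove Theorem~\ref{thm A} by constructing an explicit surjective algebra homomorphism from the shifted Yangian to the Coulomb branch algebra, and then identifying its kernel with the truncation ideal that defines the truncated shifted Yangian. Concretely, I would start from the \emph{un-truncated} shifted Yangian $\widetilde{\mathbf{Y}}_\mu$ in its Drinfeld presentation, with Cartan currents $H_i^{(r)}$ and raising/lowering currents $E_i^{(r)}, F_i^{(r)}$ indexed by quiver vertices $i \in I$, and build a map $\widetilde{\Phi}: \widetilde{\mathbf{Y}}_\mu \to \CB$ sending $H_i^{(r)}$ to explicit generating series of equivariant Chern classes in $\Cartan \subset \CB$, and sending $E_i^{(r)}, F_i^{(r)}$ to dressed minuscule monopole operators $M_{\pm \varpi_i^{(V)}, p_r}$ associated to the first fundamental coweight $\varpi_i^{(V)}$ of $\GL(V_i)$ with suitable polynomial dressings $p_r$. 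These assignments follow the GKLO-style recipe used in \cite[Appendix B]{BFN2}.

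To verify that $\widetilde{\Phi}$ is well-defined, I would check the defining relations of $\widetilde{\mathbf{Y}}_\mu$ inside $\CB$. The natural tool is the abelianized Coulomb branch $\ACB$, in which monopole operators admit tractable rational expressions; every quadratic relation and every Serre relation then reduces to an identity of rational functions in the Cartan variables. These identities are verified by direct computations paralleling those of \cite[Appendix B]{BFN2} in the finite-type dominant case. The key observation is that those arguments rely only on simpleness of the quiver and not on a finite-type or dominance hypothesis, so they should extend verbatim to arbitrary Kac--Moody $\fg_Q$ and arbitrary coweight $\mu$.

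Next, show $\widetilde{\Phi}$ descends through the truncation to a surjective map $\Phi: \mathbf{Y}_\mu^\lambda \to \CB$. The truncation ideal is generated by explicit relations among the leading coefficients of the Drinfeld currents, parametrized by $\lambda$ (which encodes $\dim W_i$); one checks directly in $\ACB$ that these relations hold, using that the degrees of the relevant minuscule monopole operators are controlled exactly by $\lambda$ and $\mu$. Surjectivity then follows from the companion result of the paper: $\CB$ is generated as a $\CC[\hbar]$-algebra by $\Cartan$ together with the dressed minuscule monopole operators, and all such generators lie in the image of $\Phi$. Setting $\hbar = 1$ gives the analogous surjection $Y_\mu^\lambda \twoheadrightarrow \CBone$.

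The main obstacle is injectivity, i.e.\ showing $\ker \Phi$ is no larger than the truncation ideal. For part (b) in finite ADE type, I would leverage \cite[Corollary B.28]{BFN2}, which treats the dominant-$\mu$ case: express $\mathbf{Y}_\mu^\lambda$ as a subquotient of some $\mathbf{Y}_{\mu'}^{\lambda'}$ with $\mu'$ dominant, and match this with a Hamiltonian-reduction realization of the corresponding Coulomb branch inside the dominant one, so that the injectivity for $\mu'$ descends to $\mu$. For part (a) in general Kac--Moody type this appeal is unavailable, and this is where I expect the main technical difficulty. My plan there is a PBW / size-matching argument at $\hbar=1$: equip both sides with a canonical filtration (monopole-operator degree on the Coulomb side, loop-order on the Yangian side), identify the two associated gradeds with their classical analogues, and compare weight-space characters. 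The paper's announced connection between the KLR Yangian algebra of \cite{KTWWY2} and Webster's extended BFN category is a natural candidate for providing the size bound needed to close this comparison.
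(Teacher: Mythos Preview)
Your proposal misidentifies where the difficulty lies. In this paper (following \cite[Section 4.2]{KTWWY2} and \cite[Appendix B(viii)]{BFN2}), the truncated shifted Yangian $Y_\mu^\lambda$ is \emph{defined} as the image of the homomorphism $Y_\mu[z_1,\ldots,z_N] \to \CBone$; see Section~\ref{section: relationship to shifted Yangians}. So for part~(a) there is no injectivity to prove at all, and your PBW/size-matching program is aimed at a nonexistent target. The entire content of part~(a) is \emph{surjectivity}: one must show that $\CBone$ is generated by $H_{\bG\times\bF}^\ast(pt)$ together with the specific dressed minuscules $M_{\varpi_{i,1},f}$ and $M_{\varpi_{i,1}^\ast,f}$, which is Theorem~\ref{thm: Yangian surjectivity}. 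You gesture at this (``the companion result of the paper''), but you have the quantifier wrong: Proposition~\ref{prop: quiver generated by minuscules} says $\CB$ is generated by \emph{all} minuscules, while the Yangian map only produces those attached to $\varpi_{i,1}$ and $\varpi_{i,1}^\ast$. Bridging that gap is genuinely the work, and it goes through $\ICB$, $\ACB$, and the nilHecke algebra (Proposition~\ref{prop: generators for abelian theory}, Corollary~\ref{cor: gens of Iwahori}, Proposition~\ref{prop: abelian as monopole}); moreover this argument only succeeds at $\hbar=1$, not over $\CC[\hbar]$.

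For part~(b), the difficulty is therefore again surjectivity, now over $\CC[\hbar]$, and your plan does not address it. Since gradings are not bounded below for non-dominant $\mu$, surjectivity at $\hbar=1$ does not imply surjectivity of the Rees map. The paper does not realize $\mathbf{Y}_\mu^\lambda$ as a subquotient of a dominant one; instead it uses \emph{two} shift homomorphisms $\iota_{\mu+\eta,-\eta,0}$ and $\iota_{\mu+\eta,0,-\eta}$ from a dominant $\mathbf{Y}_{\mu+\eta}$ \emph{into} $\mathbf{Y}_\mu$, matches them with Coulomb-branch maps (\ref{eq: CB shift 1})--(\ref{eq: CB shift 2}) via Lemma~\ref{lem: shift homomorphisms are compatible}, and then observes that the images of these two maps together contain generators supported on the positive and negative parts of the affine Grassmannian, which suffice to generate $\CB$ by a variant of Proposition~\ref{prop: quiver generated by minuscules}. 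The dominant-case surjectivity from \cite[Corollary B.28]{BFN2} then finishes the argument. Your ``Hamiltonian reduction'' idea is in the wrong direction and would not give generators of $\CB$.
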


This theorem was announced as \cite[Theorem 4.9]{KTWWY2}, and its proof appears in Section \ref{section: relationship to shifted Yangians}. Put differently, part (a) of the theorem simply says that $\CBone$ is generated by its polynomial subring $H_{\bG}^\ast(pt)$ together with certain dressed minuscule operators $M_{\varpi_{i,1}, f}$ and $M_{\varpi_{i,1}^\ast,f}$. In general type the algebra $Y_\mu^\lambda$ is defined as in \cite[Section 4]{KTWWY2}. We note that this algebra is {\em not quite} built out of the Yangian for $\fg_Q$ as defined for example in \cite[Section 2]{GNW}, unless $Q$ is finite type. See \cite{KTWWY2} for more details.

 In part (b), the algebra $\mathbf{Y}_\mu^\lambda = \operatorname{Rees} Y_\mu^\lambda$ is the Rees algebra with respect to a filtration defined explicitly in terms of the Yangian's generators; our claim is that this filtration agrees with a filtration on $\CBone$ coming from the grading on $\CB$ by homological degree. Part (b) is strictly stronger than part (a) for general $\mu$, because gradings are not bounded below.  We deduce part (b) from \cite[Corollary B.28]{BFN2}, which proves the case of $\mu$ dominant.  One consequence of part (b) is that the coordinate ring of $\cM_C$ is Poisson generated by the dressed miniscule operators $M_{\varpi_{i,1}, f}$ and $M_{\varpi_{i,1}^\ast, f}$, in finite ADE type.

In finite type A, Finkelberg and Tsymbaliuk have proved an analogue of part (b) for K-theoretic Coulomb branches and shifted quantum affine algebras \cite{FT2}.  Their proof overlaps partially with ours (see Proposition \ref{prop: quiver generated by minuscules}), but otherwise makes use of the very different machinery of shuffle algebras.  We believe the rational version of their work proves part (b) of the above theorem in the finite type A case (i.e.~for ordinary homological Coulomb branches).  Conversely it should be possible to extend results from the present paper to the K-theoretic setting, with appropriate changes.

\subsection{Our approach}
We make use of a diagram of algebras
\begin{equation}
\label{intro 1}
\CB \longrightarrow \ICB \longleftarrow \ACB
\end{equation}
They are all variants on the BFN construction.  This is a generalization of the case of a pure gauge theory (meaning $\bN = 0)$, which may be more familiar, where the above may be identified with a diagram
\begin{equation*}
\operatorname{Toda}_\hbar(\bG^\vee) \longrightarrow \mathbb{H}_\hbar \longleftarrow \mathscr{D}_\hbar(\bT^\vee),
\end{equation*}
relating the quantized open Toda lattice for the Langlands dual group, the degenerate nil-DAHA $\mathbb{H}_\hbar$, and the $\hbar$--differential operators on the dual torus $\bT^\vee$.   These algebras are studied for example in \cite{Ginzburg4}, \cite{Lon2}.   

In general, a useful analogy for (\ref{intro 1}) to keep in mind comes from the study of quotient singularities: if $\Sigma$ is a finite group acting on a $\CC$--algebra $A$, then we may consider 
\begin{equation}
\label{intro 2}
A^\Sigma \longrightarrow A \# \Sigma \longleftarrow A
\end{equation} 
This analogy is quite strong\footnote{With good reason: denoting $\bT \subset \bG$ a maximal torus and $\bT^\vee$ its dual, we can take  $A = \CC[T^\ast \bT^\vee]$ and $\Sigma$ the Weyl group. Then (\ref{intro 2}) is a ``classical approximation'' of (\ref{intro 1}), which neglects quantum corrections.  See the introduction of \cite{BDG} for a discussion.}: $\ICB$ is in many respects better behaved than $\CB$, and it contains $\CB \cong \idem \ICB \idem$ as a spherical subalgebra.  In fact, $\ICB$ is a matrix algebra over $\CB$ as shown by Webster \cite{Web3} (see  Section \ref{sec: CB and ICB} below).  We prove that, at least in the case of quiver gauge theories, the nilHecke algebra $\nilHecke$ plays the role of the group algebra $\CC[\Sigma]$:
\begin{TheoremA}[Corollary \ref{cor: quiver ICB generated by ACB and nilHecke} in the main text]
\label{thm B}
For any quiver gauge theory, $\ICB$ is generated by its subalgebras $\ACB$ and $\nilHecke$.
\end{TheoremA}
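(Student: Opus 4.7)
The plan is to identify an explicit generating set for $\ICB$ coming from the BFN construction, and then check that each generator lies in the subalgebra generated by $\ACB$ and $\nilHecke$.

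From the Iwahori-equivariant BFN construction, $\ICB$ is generated over the polynomial ring $H^\ast_\bT(pt)$ by two kinds of classes: the divided-difference elements $\partial_i$ attached to simple reflections of the Weyl group of $\bG$, and the non-abelian monopole operators $M_\lambda$. The polynomial ring together with the $\partial_i$ generate $\nilHecke$; moreover, $H^\ast_\bT(pt)$ also lies inside $\ACB$, as do the abelian monopole classes $u_\lambda$ for $\lambda \in X_\ast(\bT)$. The task therefore reduces to expressing each non-abelian $M_\lambda$ in terms of the $u_\lambda$ and the $\partial_i$.

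For this expression I would use a $\bT$-fixed point localization argument on the variety of triples $\cR$. The $\bT$-fixed locus inside the $\GO$-orbit through $z^\lambda$ is indexed by the Weyl-group orbit $W\cdot\lambda$, and localization identifies $M_\lambda$ with a Demazure-type sum of terms $u_{w\lambda}$ weighted by Euler classes of the normal directions contributed by $\bN$. After clearing denominators, this should produce an identity of the schematic form $M_\lambda = \partial_{w_\lambda}\bigl(u_\lambda \cdot P_\lambda\bigr)$, where $\partial_{w_\lambda} \in \nilHecke$ corresponds to a reduced word for an element sending $\lambda$ to the dominant chamber and $P_\lambda \in H^\ast_\bT(pt)$ is an explicit Euler-class correction coming from $\bN$. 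This is the ``quantum'' analogue of the tautological generation of $A \# \Sigma$ by $A$ and $\Sigma$ in the classical picture \eqref{intro 2}.

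The main obstacle I expect is the integrality of this Demazure-type formula when $\bN$ is nontrivial: the denominators produced by fixed-point localization must cancel against the Euler classes $P_\lambda$ in order to land in $\ICB$, rather than only in a localization thereof. A convenient workaround is to first establish the identity for $\lambda$ a fundamental coweight of one of the $\GL(V_i)$-factors of $\bG$, where $W\cdot\lambda$ is small and the fixed-point contributions are tractable, and then obtain arbitrary $M_\lambda$ by multiplication inside $\ICB$ and induction on the length of $\lambda$. This inductive step leverages the fact that once the subalgebra $\langle \ACB, \nilHecke \rangle$ contains a generating set in the sense above, it is automatically closed under the algebra operations of $\ICB$.
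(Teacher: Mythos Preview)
Your proposal has the right geometric intuition, and the Demazure-type formula you are aiming for is essentially what the paper proves as Lemma~\ref{lemma: minuscule monopole reexpressed}. However, there are two genuine gaps.

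First, your opening claim---that $\ICB$ is generated over $H^\ast_\bT(pt)$ by the $\partial_i$ together with ``non-abelian monopole operators $M_\lambda$''---is not a fact one can simply read off the Iwahori BFN construction. The monopole operators $M_{\laG,f}$ live in $\CB$, not in $\ICB$, and the relationship between the two algebras is itself a nontrivial structural result (Theorem~\ref{thm: ICB matrix algebra over CB}): $\ICB$ is a matrix algebra over $\CB$, hence generated by $\nilHecke$ together with the image of $\CB$ under the spherical embedding $a \mapsto \Inc \ast a \ast \Av$. Without this step, you have no description of a generating set for $\ICB$ to work with.

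Second, and more seriously, the formula $M_\lambda = \partial_{w_\lambda}(u_\lambda \cdot P_\lambda)$ can only be made to work when $\laG$ is \emph{minuscule}. For non-minuscule $\laG$ the operator $M_{\laG,f}$ is not even canonically defined (Remark~\ref{remark: monopoles not welldef}), and no clean identity of that shape holds. Your proposed workaround---establish the formula for fundamental coweights and then ``obtain arbitrary $M_\lambda$ by multiplication and induction on length''---misidentifies the remaining task. What is actually needed is to show that the minuscule $M_{\laG,f}$ already generate $\CB$; this is Proposition~\ref{prop: quiver generated by minuscules}, and it is precisely where the quiver hypothesis enters. The argument refines the hyperplane arrangement coming from the weights of $\bN$ and uses that, for a quiver, every chamber is generated as a semigroup by minuscule coweights. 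This step is not a formality and can fail for general $(\bG,\bN)$, so your induction on length does not substitute for it.
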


This gives a positive answer to a conjecture of Dimofte and Garner \cite[Section 3.5]{DimGar}.  In a sense, it tells us that $\ICB $ is some sort of smash product of $\ACB$ and $\nilHecke$.  

Along the way to proving this result, we explore other aspects of the relationship  between the algebras $\CB, \ICB, \ACB$ which may be of independent interest (Sections \ref{sec: CB and ICB} and \ref{sec: generation by minuscules general case}).  Ultimately, a main ingredient in the proof is the fact that $\CB$ is generated by minuscules (Proposition \ref{prop: quiver generated by minuscules}).  This result is known to experts \cite[Remark 6.7]{BFN1}, \cite[Section 4.3]{BDG}.  It is also proven during the course of the proof of \cite[Theorem 4.29]{FT2}.  

\subsection{Webster's BFN category, cylindrical KLR diagrams, and Yangians}

In the final section of this paper we focus on the extended BFN category $\sB$ introduced by Webster \cite[Section 3]{Web3}, specialized to the situation of a quiver gauge theory $(\bG, \bN)$ with simple $Q$ as defined above.  We consider a full subcategory $\catya$ of $\sB$, whose objects are labelled by certain tuples $\bi = (i_1,\ldots,i_n) \in I^\bv$, see Section \ref{section: a subcategory}.   By repackaging \cite[Theorem 3.10]{Web3}, we describe how morphisms in this category can be encoded in terms of cylindrical KLR diagrams.  This is similar to \cite{Web4}, which covers the Jordan quiver case, as well as forthcoming work \cite{Web5}.

One of the key players in \cite{KTWWY2} is the {KLR Yangian algebra} $\Ya$, which is defined in terms of the same cylindrical KLR diagrams.  We consider $\Ya$ as an algebra over $\CC[\hbar]$.  It contains idempotents $\idem(\bi)$ for each $\bi \in I^\bv$. For a certain choice $\bi_\bv \in I^\bv$  there is a primitive idempotent $\idemCB\in \Ya$ which is a summand of $\idem(\bi_\bv)$. 

\begin{TheoremA}[\mbox{Section \ref{section: finale} in the main text}]
\label{thm C}
\

\begin{enumerate}
\item[(a)]  There is an isomorphism of algebras
$$
\bigoplus_{\bi,\bj\in I^\bv} \idem(\bj) \Ya \idem(\bi)  \stackrel{\sim}{\longrightarrow} \bigoplus_{\bi, \bj \in I^\bv} \Hom_{\catya}(\bi, \bj)
$$

\item[(b)] There is an isomorphism $Y_\mu^\lambda \cong \idemCB \Ya_{\hbar =1} \idemCB$.  In finite ADE type, this upgrades to an isomorphism $\mathbf{Y}_\mu^\lambda \cong \idemCB \Ya \idemCB$.
\end{enumerate}
\end{TheoremA}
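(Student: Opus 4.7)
The plan is to deduce Theorem \ref{thm C} in two steps. Part (a) will be a direct translation of Webster's diagrammatic presentation of the extended BFN category into the cylindrical KLR diagrams defining $\Ya$, and part (b) will follow by cutting part (a) down with the primitive idempotent $\idemCB$ and combining with Theorem \ref{thm A}.

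For part (a), the strategy is to invoke Webster's Theorem 3.10 from \cite{Web3}, which gives a presentation of morphisms in $\sB$ via cylindrical KLR-type diagrams. By definition, $\catya$ is the full subcategory of $\sB$ on objects labelled by $\bi \in I^\bv$, so restricting Webster's presentation to these objects furnishes a diagrammatic description of $\bigoplus_{\bi,\bj} \Hom_{\catya}(\bi,\bj)$ in exactly the same language used to define $\Ya$ in \cite{KTWWY2}. To turn this into the claimed algebra isomorphism I would: first, match each idempotent $\idem(\bi) \in \Ya$ with the identity morphism of $\bi$ in $\catya$; second, match Webster's local generators -- crossings, dots, boxed polynomials, red strands encoding the framing $W$, and the cylindrical braid generator -- with the corresponding generators of $\Ya$; and third, verify that the KLR relations together with the cylindrical and framing relations agree on both sides. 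Summing over $\bi, \bj \in I^\bv$ then yields the desired algebra isomorphism, with composition on the right corresponding to multiplication on the left.

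For part (b), the approach is to apply $\idemCB \cdot (-) \cdot \idemCB$ to the isomorphism of part (a). Since $\idemCB$ is a summand of $\idem(\bi_\bv)$ and $\catya \subset \sB$ is full, this gives
\[
\idemCB \Ya \idemCB \;\cong\; \idemCB \bigl( \idem(\bi_\bv) \Ya \idem(\bi_\bv) \bigr) \idemCB \;\cong\; \idemCB \End_{\sB}(\bi_\bv) \idemCB.
\]
The key geometric input is then that, for the distinguished choice $\bi_\bv$, the endomorphism algebra $\End_{\sB}(\bi_\bv)$ is identified with (a suitable shifted version of) $\ICB$, with $\idemCB$ corresponding to the symmetrising idempotent $\idem$ that cuts out the spherical subalgebra $\CB \cong \idem \ICB \idem$. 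This yields $\idemCB \Ya \idemCB \cong \CB$ as $\CC[\hbar]$--algebras. Setting $\hbar = 1$ and applying Theorem \ref{thm A}(a) gives $\idemCB \Ya_{\hbar=1} \idemCB \cong \CBone \cong Y_\mu^\lambda$. In finite ADE type, Theorem \ref{thm A}(b) upgrades this to the graded $\CC[\hbar]$--statement $\idemCB \Ya \idemCB \cong \CB \cong \mathbf{Y}_\mu^\lambda$.

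The main obstacle is part (a): carefully calibrating Webster's diagrammatic conventions against those defining $\Ya$ in \cite{KTWWY2}. While the two presentations are essentially the same in spirit, the precise form of the red strands encoding the framing, the cylindrical braid generator, the $\hbar$--degrees on dots, and the signs appearing in the KLR relations all have to be reconciled. Once this match of presentations is firmly in place, part (b) reduces to a bookkeeping exercise in identifying which corner of $\ICB$ is picked out by $\idemCB$, together with a direct application of Theorem \ref{thm A}.
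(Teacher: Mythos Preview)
Your plan for part (b) is essentially the paper's argument: apply $\idemCB(-)\idemCB$ to part (a), use $\End_{\catya}(\bi_\bv)=\ICB$ and $\idemCB\ICB\idemCB\cong\CB$, then invoke Theorem~\ref{thm A}. One extra step the paper carries out, which you omit, is checking that the resulting isomorphism agrees with the \emph{specific} map $Y_\mu^\lambda\to\idemCB\Ya_{\hbar=1}\idemCB$ already constructed in \cite[Theorem 4.19]{KTWWY2}; this is done by pushing the Yangian generators $F_i^{(r)}$, $E_i^{(r)}$, $A_i^{(r)}$ around a commutative square. For the bare existence statement in Theorem~\ref{thm C}(b) your argument suffices, but the paper's stated goal is to complete that earlier proof, so this compatibility check matters.

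For part (a) your outline is too optimistic about what comes for free. Webster's \cite[Theorem 3.10]{Web3} does \emph{not} present $\sB$ directly via cylindrical KLR diagrams: its generators are morphisms $\mathbbm{r}_\pi$ attached to paths in $\bt_\RR$, together with isomorphisms $y_w$ for $w\in\widehat{\Sigma}$, and its relations are phrased in that language. The paper spends a whole subsection constructing the translation: a cylindrical diagram $D$ is ``unrolled'' to a path $\pi$ and an affine Weyl group element $w$, and one sets $\morphism_D = y_w^{-1}\mathbbm{r}_\pi$. That this is well-defined (isotopy invariant, multiplicative under stacking) requires argument. Also, there are no red strands in $\Ya$ here; the framing $W$ enters only through the polynomials $p_i(u)$, so that part of your generator-matching is off.

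More substantively, the paper does \emph{not} verify the isomorphism by matching generators and relations. Well-definedness of $D\mapsto\morphism_D$ is checked by comparing the faithful polynomial representations of $\Ya$ from \cite[Theorem 4.17]{KTWWY2} and of $\sB$ from \cite[Theorem 3.10]{Web3}: one simply observes that the generating diagrams act by the same formulas. Bijectivity is then proved by matching \emph{bases}: for each $w\in\widehat{\Sigma}$ a minimal-length path gives an element $y_w^{-1}\mathbbm{r}_\pi$, and these form a $H_{\bT\times\bF\times\CC^\times}^\ast(pt)$-basis of $\Hom_{\catya}(\bi,\bj)$ by \cite[Corollary 3.12]{Web3}, while the corresponding rolled diagrams form a basis of $\idem(\bj)\Ya\idem(\bi)$ by the proof of \cite[Theorem 4.17]{KTWWY2}. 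Your generators-and-relations route could in principle be made to work, but you would still need an independent argument for injectivity and surjectivity; the basis-matching strategy handles both at once.
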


As discussed above, part (b) is the truly new content here, while part (a) is due to Webster. Part (b) completes the proof of \cite[Theorem 4.19]{KTWWY2}. 

In the case when the framing $W= 0$, the algebra in part (a) of the theorem above is considered in \cite[Section 6.5]{F}, where it is denoted by $\mathcal{H}_V$, and some interesting conjectures are made about its properties (which we do not address in this paper, but see Remark \ref{remark: finkelberg icm}).  With $W$ is arbitrary, we get a generalization of $\mathcal{H}_V$.

\subsection{OGZ algebras}
As shown in \cite[Appendix A(i)--(ii)]{BFN2}, there is an embedding $\bz^\ast(\iota_\ast)^{-1}: \CBone \hookrightarrow \sD$ into a ring $\sD$ of difference operators\footnote{Note that we have specialized $\hbar =1$ here to match with OGZ algebras, but this is not necesary for this embedding.}.  This map generalizes the GKLO representation from \cite{KWWY}, which was based on a representation of the Yangian defined in \cite{GKLO}.  

Part (a) of Theorem \ref{thm A} tells us that the image of $\CBone$ in $\sD$ is the subalgebra generated by a certain collection of difference operators (\ref{eq: difference ops 1}--\ref{eq: difference ops 3}).  Suppose that $Q$ corresponds to a finite type A Dynkin diagram with orientation $1\leftarrow \cdots \leftarrow n-1$, and $W = W_{n-1}$. After a change of variables, the above difference operators in this case are precisely the generators of an {\em Orthogonal Gelfand-Zetlin} algebra $U(\mathbf{r})$, as defined by Mazorchuk \cite{Maz}.  In particular, Theorem \ref{thm A} says that there is an isomorphism
\begin{equation}
\CBone \stackrel{\sim}{\longrightarrow} U(\mathbf{r})
\end{equation}
See Section \ref{section: OGZ} for more details.    From this perspective, one can think of $\CBone$ for an arbitrary $Q$ as a sort of generalized OGZ algebra.  

This connection between OGZ algebras and quantized Coulomb branches has been exploited to give new results on the representation theory of OGZ algebras \cite{Web6}, \cite[Section 6]{KTWWY2}.

\subsection{Loops and multiple edges}

Although we focus on simple quivers, many of our results apply to quivers with multiple edges and/or loops, with minimal changes.  Essentially, the exceptions to this rule are those statements about Yangians.  

More precisely, the results in Section \ref{sec: geometry} are valid for arbitrary Coulomb branches, while the results in Section \ref{section: generators in the quiver gauge theory case} are valid for arbitrary quivers, with the exception of Section \ref{section: relationship to shifted Yangians}.  What is currently lacking is an appropriate generalization of shifted Yangians to this setting. Note that the Jordan quiver is related to the Yangian of $\widehat{\mathfrak{gl}}(1)$ \cite{KN}, and in general it seems reasonable the appropriate generalization should come from shuffle or cohomological Hall algebras.  Similarly, the results in Section \ref{sec: coulomb cat} can be generalized: Webster's work \cite{Web3} applies to arbitrary Coulomb branches.

Finkelberg and Goncharov study multiloop versions of the Jordan quiver, having $r$ loops instead of one.  For certain dimension vectors they realize the Coulomb branch as a Slodowy slice for $\mathfrak{sp}(2r)$ \cite{FG}, and they expect that the quantized Coulomb branch is isomorphic to the corresponding finite W--algebra.  It would be interesting to understand what the analogue of the shifted Yangian is in this case, and how it relates to $U(\mathfrak{sp}(2r) )$.

\subsection{Relation to other work}

As was discussed above, the connection between Yangians and Coulomb branches was studied in \cite{BDG}, \cite{BFN2}.  This built on relations between affine Grassmannian slices and Yangians introduced in \cite{KWWY}, which was in turn inspired by work of Gerasimov, Kharchev, Lebedev and Oblezin relating Yangians with monopoles \cite{GKLO}.   Finkelberg and Rybnikov have studied the related topic of quantizations of Zastava spaces using Borel Yangians \cite{FR1}, \cite{FR2}, which is also closely connected to the present work, see Remark \ref{remark: Zastava}.

In the case of the Jordan quiver, Kodera and Nakajima \cite{KN} prove that the quantized Coulomb branch is a subquotient of the Yangian of $\widehat{\mathfrak{gl}}(1)$.  This is the appropriate analogue of Theorem \ref{thm A} in this case. The Jordan case is also studied in \cite{Web4}, \cite{BEF}, and connected to Cherednik algebras.

There are several papers studying related questions in the physics literature, including \cite{BDG}.  The work of Dimofte and Garner \cite{DimGar} is close to our approach.  They study the (quantized) coordinate rings for Coulomb branches star-shaped quivers, and in particular also make use of divided difference operators.  Hanany and Miketa study certain quivers of finite AD type \cite{HM}, and also see what we believe are part of the Yangian's relations. We are hopeful that the results and techniques of the present paper will prove useful in further physical research.

Finkelberg and Tsymbaliuk have developed the parallel situtation of K-theoretic Coulomb branches for quiver gauge theories, and their relationship to shifted quantum affine algebras \cite{FT1}, \cite{FT2}, which are algebras they introduced.  As mentioned above, they have recently proven the analog of Theorem \ref{thm A} in finite type A, by making use of shuffle algebras.  

Finally, Cautis and Williams \cite{CW} have studied some questions related to ours, but one categorical level higher: they study (perverse) coherent sheaves on $\Gr_{GL(n)}$.  Put differently, they study the categorification of the (K-theoretic) Coulomb branch for pure $\GL(n)$ gauge theory.

\subsection{Some conventions}
\label{sec: notation} 
Let $G$ be a group and $H\subset G$ its subgroup.  For $V$ a representation of $H$, we will denote the induced vector bundle over $G/H$ by $G\times^H V = (G\times V) / H$. It is the quotient by the $H$--action $h\cdot (g, v) = (g h^{-1}, hv)$.  

\subsubsection{Quiver gauge theories}
As above, we will fix a quiver $Q$ with vertex set $I$. For brevity, we will write $i\rightarrow j$ if and only if there is an arrow from $i$ to $j$.  We write $i\sim j$ if $i\rightarrow j$ or $j\rightarrow i$.  We assume that $Q$ is simple: there is at most one arrow connecting $i$ and $j$, and there are no loops $i\rightarrow i$.  

Fix $I$--graded vector spaces $W = \bigoplus_{i\in I}$, $V = \bigoplus_{i\in I} V_i$.  Throughout the paper $(\bG, \bN)$ will be defined as in (\ref{eq: G}) and (\ref{eq: N}), although the results of Section \ref{sec: geometry} are valid for arbitrary $(\bG, \bN)$.  We will also let $\bF = \prod_i \bF_i \subseteq \prod_{i\in I} \GL(W_i)$ be a maximal torus.  This group acts on $\bN$, commuting with the action of $\bG$.  Our choice of $\bF$  is a matter of convenience, and with minor changes our results hold for the full flavour symmetry group $\bF = N^\circ_{\GL(\bN)}(\bG)/ \bG$ as considered  in \cite[Section 2]{Web3}.

Write $\bv_i = \dim_\CC V_i$, and also $|\bv| = \sum_i \bv_i$.  For each $i\in I$, we will fix an ordered basis $\{\basV_{i,r}: 1\leq r\leq \bv_i\}$ for $V_i$.  Let $\bT \subset \bG$ be the corresponding maximal torus consisting of diagonal matrices at each node, and $\bB, \bB^- \subset \bG$ be the Borel subgroups given by upper/lower triangular matrices at each node.  We denote $\bg = \operatorname{Lie} \bG$ and $\bt = \operatorname{Lie} \bT$.

The Weyl group $\Sigma$ of $\bG$ is a product of symmetric groups $\Sigma = \prod_i \Sigma_{\bv_i}$. The coweight lattice of $\bG$ and $\bT$ is $\ZZ^{\bv} = \bigoplus_{i\in I} \ZZ^{\bv_i}$.  For our choice of $\bB$, recall that a coweight $\laG = (\laG_{i,r})_{i\in I, 1\leq r\leq \bv_i}$ is dominant iff $\laG_{i, r} \geq \laG_{i,s}$ for all $i\in I$ and $1\leq r \leq s \leq \bv_i$.  The extended affine Weyl group of $\GL(V_i)$ is the semidirect product $\widehat{\Sigma}_{\bv_i} = \ZZ^{\bv_i} \rtimes \Sigma_{\bv_i}$, while the extended affine Weyl group associated to $\bG$ is the product $\widehat{\Sigma} = \prod_{i\in I} \widehat{\Sigma}_{\bv_i}$.   For $\laG \in \ZZ^\bv$ we will sometimes write $z^\laG \in \widehat{\Sigma}$, or else just $\laG \in \widehat{\Sigma}$.

\subsubsection{(Co)homology}
\label{notation: cohomology}
Corresponding to our basis of $V$, there are coordinates on $\bt$ which we denote $\eV_{i,r}$.  We identify the cohomology rings
\begin{align*}
H_{\bT}^\ast(pt) = \CC[ \eV_{i,r} : i \in I, 1\leq r\leq \bv_i], \qquad H_{\CC^\times}^\ast(pt)  = \CC[\hbar],
\end{align*}
so that $\hbar = c_1(\CC)$ for the weight 1 action of $\CC^\times$ on $\CC$.  Following the notation of \cite[Section 3(v)]{BFN2} we will also choose coordinates on $\operatorname{Lie}\bF$ compatibly with the decomposition $\bF = \prod_i \bF_i$, and identify $H_\bF^\ast(pt) = \CC[z_1,\ldots,z_N]$.  We can think of $z_1,\ldots,z_N$ as corresponding to a sequence $i_1,\ldots, i_N$ where $i$ appears $\bw_i$ times, so that  $H_{\bF_i}^\ast(pt) = \CC[z_k : i_k = i]$.

Throughout this paper, we implicitly use the notion of Borel-Moore homology of a placid ind-scheme with a dimension theory \cite[Section 3.9]{Lon}. We will always take $\CC$--coefficients, although some results will continue to hold with coefficients in an arbitrary commutative ring\footnote{We should mention that the consideration of positive characteristic coefficients has led to several interesting results about Coulomb branches, for example in \cite{Lon} and \cite{McWeb}.}.  This is an appropriate formalism for all computations below, and one can verify compatibility of dimension theories as required.  For our purposes, it suffices to know that there are versions of all usual operations on equivariant Borel-Moore homology (e.g.~proper pushforward, flat pull-back, restriction with supports/refined Gysin homomorphisms) which interact as expected (e.g.~proper base change).  In other words, one can more or less pretend to be working in the context of \cite{Fulton}. 

\subsection*{Acknowledgements}  I am grateful to Joel Kamnitzer and Ben Webster for many helpful conversations and suggestions, and to our collaborators Peter Tingley and Oded Yacobi for our work together which precipitated this paper.   I also would like to thank Tudor Dimofte, Michael Finkelberg, Dinakar Muthiah, and Alexander Tsymbaliuk for discussions and comments.
This research was supported in part by Perimeter Institute for Theoretical Physics. Research at Perimeter Institute is supported by the Government of Canada through the Department of Innovation, Science and Economic Development Canada and by the Province of Ontario through the Ministry of Research, Innovation and Science.

\section{Quantized Coulomb branches}
\label{sec: geometry}

In this section we overview the theory of Coulomb branches from \cite{BFN1}, as well some results from \cite{Web3}.  We then study relationships between elements of $\CB, \ICB$ and $\ACB$.

For simplicity of notation, we will work throughout with $(\bG, \bN)$ a quiver gauge theory.  However, modulo  minor notational differences, all results in this section hold for arbitrary $(\bG, \bN)$. 

\subsection{The affine Grassmannian and affine flag variety}
\label{sec: Gr and Fl}

Consider the loop groups $\GO \subset \GK$, as well as the Iwahori subgroup $\Iwa \subset \GO$ defined by
\begin{equation}
\label{eq: the Iwahori}
\Iwa \DEF \left\{ g(z) \in \GO : g(0) \in \bB^- \right\}
\end{equation}
Recall that the affine Grassmannian and affine flag variety for $\bG$ are defined as the quotient spaces
\begin{equation*}
\Gr \DEF \GK / \GO, \qquad \cF \DEF \GK / \Iwa
\end{equation*}
These objects may be thought of as ind-schemes over $\CC$, see for example \cite{Zhu} for an overview.  We will mostly work on the level of $\CC$-points of $\Gr$ or $\cF$, and by abuse of notation will simply write $\GO$ rather than $\bG( \CC[[z]] )$, etc.  We denote a point of $\Gr$ or $\cF$ by a class $[g]$, where $g\in \GK$. 

There is an embedding of $\widehat{\Sigma}$ into $\cF$, and of $\Sigma^\bv$ into $\Gr$.  For $\bG = \prod_i \GL(V_i)$ we can even embed $\widehat{\Sigma}$ into $\GK$: to $\laG \in \ZZ^\bv$ we associate the point $z^\laG = \prod_i \operatorname{diag}(z^{\laG_{i, 1}},\ldots, z^{\laG_{i, \bv_i}} )$, while any $w\in \Sigma$ may be embedded as its permutation matrix $w \in \bG \subset \GK$.  In any case, we will only ever use their images in $z^\laG \in \Gr$ and $w z^\laG \in \cF$ for $\laG \in \ZZ^\bv, w\in \Sigma$.

There is a stratification of $\Gr$ by the $\GO$--orbits $\Gr^\laG = \GO z^\laG$ for $\laG$ dominant.  Similarly, $\cF$ is stratified by the $\Iwa$--orbits $\cF^w = \Iwa w$ (Schubert cells) for $w\in \widehat{\Sigma}$. These satisfy the closure relations
\begin{equation*}
\overline{\Gr^{\laG}} = \bigsqcup_{\muG \leq \laG} \Gr^{\muG}, \qquad \overline{\cF^w} = \bigsqcup_{v \leq w} \cF^v,
\end{equation*}
where $\leq$ denotes the affine Bruhat order. In particular, $\muG \leq \laG$ iff $\laG-\muG$ is a non-negative sum of simple coroots of $\bG$.

\subsection{Quantized Coulomb branch algebras} 
\label{section: quantized Coulom branch algebras}
We consider three variations on the quantized Coulomb branch algebra from \cite{BFN1}.  To this end, consider the spaces
\begin{align}
\cR^{\mathsf{sph}} &\DEF \left\{ [g, s] \in \GK \times^{\GO} \NO : gs \in \NO \right\}, \label{eq: R CB def}\\
\cR & \DEF \left\{  [g, s] \in \GK \times^{\Iwa} \NO : gs \in \NO \right\}, \label{eq: R ICB def}\\
\cR^{\mathsf{ab}} &\DEF  \big\{ ( [t, n]) \in \TK \times^{\TO}\NO : tn \in \NO  \big\} \label{eq: R ACB def}
\end{align}
	Each may be considered as a placid ind-scheme of infinite type over $\CC$. Following \cite[Section 2(ii)]{BFN1}, we can define the equivariant Borel-Moore homology
\begin{align}
\CB & \DEF H_\ast^{ ( \GO \times \bF) \rtimes \CC^\times} ( \cR^{\mathsf{sph}} ), \label{eq: CB def}\\
\ICB & \DEF H_\ast^{(\Iwa \times \bF) \rtimes \CC^\times} ( \cR ), \label{eq: ICB def}\\
\ACB & \DEF H_\ast^{(\TO \times \bF) \rtimes \CC^\times} (\cR^{\mathsf{ab}} ) \label{eq: ACB def}
\end{align}

Note that in (\ref{eq: CB def})--(\ref{eq: ACB def}), the group $\CC^\times$ is acting by loop rotation on the spaces (\ref{eq: R CB def})--(\ref{eq: R ACB def}), respectively.  Following \cite{BFN1}, we will modify this action slightly by letting $\CC^\times$ also act on $\bN$ by weight $1/2$.  Explicitly, for a point $[g(z), s(z)]$ in any of these respective spaces, the action of $\tau\in \CC^\times$ is by $$\tau\cdot [ g(z), s(z) ] = [ g(\tau z), \tau^{1/2} s(\tau z) ]$$
Meanwhile, the group $\GO \times \bF$ acts on $\cR^{\mathsf{sph}}$ by  $(g_1, f) \cdot [g_2, s] = [g_1 g_2, f s]$, and similarly for $\cR, \cR^{\mathrm{ab}}$.

There is a convolution product $\ast$ on the above algebras, defined as in \cite[Section 3(iii)]{BFN1}, or alternatively as in \cite[Appendix B(ii)(d)]{BFN3}.  The Coulomb branch construction comes equipped with the following features:
\begin{Theorem}[\mbox{\cite{BFN1}}]
\label{thm: bfn thm}

\begin{itemize}
\item[(a)] $\CB$ is an associative algebra under the convolution product $\ast$, with unit $\unit\in \CB$ the fundamental class of the fibre of $\cR^{\mathsf{sph}} \rightarrow \Gr$ over the identity point $[1] \in \Gr$.  

\item[(b)] The convolution product is $H_{\bG\times \bF\times \CC^\times}^\ast(pt)$--linear in the first variable with respect to cap product. In particular, there is an algebra embedding $H_{\bG\times\bF \times \CC^\times}^\ast(pt) \hookrightarrow \CB$ defined by $\varphi \mapsto \varphi \cap \unit$

\item[(c)] $\CB$ is free as a left (or right) module over $H_{\bG\times \bF\times \CC^\times}^\ast(pt)$.
\end{itemize}

Analagous claims hold for $\ICB$ and $\ACB$, where in parts (b), (c) the relevant algebra is $H_{\bT\times \bF \times \CC^\times}^{\ast}(pt)$.
\end{Theorem}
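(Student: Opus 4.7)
\medskip

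\noindent\textbf{Proof proposal.} Since the theorem is credited to \cite{BFN1}, the plan is to recall the main ingredients of their construction rather than to carry it out in detail; all three algebras $\CB$, $\ICB$, $\ACB$ are variants of the same machine, so I would treat $\CB$ carefully and indicate the changes needed for the other two.

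First, for (a), the key point is to set up the convolution diagram. One considers the twisted product $\cR^{\mathsf{sph}} \tilde{\times} \cR^{\mathsf{sph}} \DEF \{[g_1, g_2, s] \in \GK \times^{\GO} \GK \times^{\GO} \NO : g_2 s \in \NO,\ g_1 g_2 s \in \NO\}$ together with the standard diagram
\begin{equation*}
\cR^{\mathsf{sph}} \times \cR^{\mathsf{sph}} \longleftarrow \widetilde{\cR} \longrightarrow \cR^{\mathsf{sph}} \tilde{\times} \cR^{\mathsf{sph}} \longrightarrow \cR^{\mathsf{sph}},
\end{equation*}
where $\widetilde{\cR}$ is a $\GO$-torsor, the second map is the quotient, and the third is multiplication $[g_1, g_2, s] \mapsto [g_1 g_2, s]$. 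The convolution product $\alpha \ast \beta$ is defined as in \cite[Section 3(iii)]{BFN1} by flat pullback, descent, and proper pushforward (the multiplication map is ind-proper on relevant strata). Associativity then follows from a standard three-fold version of this diagram together with base change and functoriality of proper pushforward and flat pullback in equivariant Borel-Moore homology. For the unit, note that the fibre of $\cR^{\mathsf{sph}} \to \Gr$ over $[1]$ is exactly $\NO$; one checks directly that convolving $\unit = [\NO]$ with any class $\alpha$ reproduces $\alpha$, because the relevant multiplication map restricts to the identity on the support of the convolution.

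For (b), $H^\ast_{\bG\times\bF\times\CC^\times}(pt)$ acts on the whole diagram via the equivariant structure, so the cap product commutes with flat pullback and proper pushforward on the first factor; this immediately gives $H^\ast_{\bG\times\bF\times\CC^\times}(pt)$-linearity of $\ast$ in the first slot. The map $\varphi \mapsto \varphi \cap \unit$ is an algebra map because $\unit$ is the convolution unit, and it is injective because $\cR^{\mathsf{sph}}$ contains $\NO$ as a closed subscheme whose equivariant Borel-Moore homology is identified with $H^\ast_{\bG\times\bF\times\CC^\times}(pt)$ (using that $\NO$ is contractible and $\GO$ retracts onto $\bG$).

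For (c), the plan is to use the $\GO$-orbit stratification $\Gr = \bigsqcup_{\laG} \Gr^{\laG}$ from Section \ref{sec: Gr and Fl}. This induces an exhaustive filtration of $\cR^{\mathsf{sph}}$ by closed subsets $\cR^{\mathsf{sph}}_{\leq \laG}$, and the associated graded pieces are (up to a shift) vector bundles over $\Gr^{\laG}$, hence their equivariant Borel-Moore homology is free of rank one over $H^\ast_{\bG\times\bF\times\CC^\times}(pt)$, with generator the fundamental class of the preimage of $\Gr^{\laG}$. Freeness of $\CB$ over $H^\ast_{\bG\times\bF\times\CC^\times}(pt)$ then follows from the long exact sequences of the filtration collapsing to short exact sequences of free modules (using that there are no odd-degree classes in the graded pieces). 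The same argument works for $\ICB$ with the Iwahori stratification of $\cF$ by Schubert cells $\cF^w$, and for $\ACB$ using the torus case (where the orbit structure on $\TK/\TO$ is particularly simple).

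The main obstacle, and the part that really requires the careful formalism invoked in Section \ref{notation: cohomology}, is handling the infinite-dimensionality of $\cR^{\mathsf{sph}}$: neither the fibre dimensions in $\cR^{\mathsf{sph}}\to\Gr$ nor the dimensions of $\NO$ and $\GO$ are finite, so one must work with placid ind-schemes with a chosen dimension theory so that fundamental classes, pullbacks, and pushforwards exist and behave as in \cite{Fulton}. Once this formalism is in place, everything proceeds as outlined.
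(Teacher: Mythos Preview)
The paper does not give its own proof of this theorem: it is stated with attribution to \cite{BFN1} and no argument is supplied. So there is nothing to compare your proposal against except the original BFN construction, and your sketch is a reasonable summary of how that goes.

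One inaccuracy in part (c): the associated graded pieces are \emph{not} free of rank one over $H^\ast_{\bG\times\bF\times\CC^\times}(pt)$. The preimage of $\Gr^{\laG}$ in $\cR^{\mathsf{sph}}$ is an equivariant vector bundle over the orbit $\Gr^{\laG} \cong \GO / P_{\laG}$, so its equivariant Borel--Moore homology is (up to shift) $H^\ast_{L_{\laG}\times\bF\times\CC^\times}(pt)$, where $L_{\laG}$ is the Levi stabilizing $\laG$. As a module over $H^\ast_{\bG\times\bF\times\CC^\times}(pt)$ this is free of rank $|\Sigma|/|\Sigma_{\laG}|$, not rank one; the fundamental class alone does not generate it (this is exactly why the dressed monopole operators $M_{\laG,f}$ in Section~\ref{section: monopole} involve a dressing $f \in H_\bT^\ast(pt)^{\Sigma_{\laG}}$). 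The freeness conclusion survives, but you should correct the rank and the description of generators. By contrast, your rank-one claim \emph{is} correct for $\ICB$ and $\ACB$, since Iwahori orbits on $\cF$ and $\TO$-orbits on $\Gr_\bT$ are affine cells.
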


We call $\CB$ the {\bf quantized Coulomb branch algebra} associated to the pair $(\bG, \bN)$.  Its base change $\CB_{\hbar = 0}$ at the irrelevant ideal $H_{\CC^\times}^\ast(pt) \rightarrow \CC$ is a commutative ring, and
\begin{equation*}
\cM_C \DEF \Spec \CB_{\hbar =0}
\end{equation*}
is called the {\bf Coulomb branch} associated to $(\bG, \bN)$ \cite[Definition 3.13]{BFN1}.  (More precisely, it is the {\em flavour deformation} of the Coulomb branch corresponding to $\bF$, see \cite[Section 3(viii)]{BFN1}).    Similarly, $\ACB$ is the quantized Coulomb algebra branch associated to $(\bT, \bN)$.  The algebra $\ICB$ is not strictly speaking a quantized Coulomb branch algebra, but rather is a matrix algebra over $\CB$ (see Theorem \ref{thm: ICB matrix algebra over CB}). $\ICB_{\hbar = 0}$ can thus be thought of as the endomorphism algebra of a (trivial) vector bundle over $\cM_C$.


\subsection{Filtrations and monopole operators}
\label{section: monopole}

There is a natural map $\cR^{\mathsf{sph}}\rightarrow \Gr$ defined by $[g,s] \mapsto [g]$.  Denote by $\cR^{\mathsf{sph}}_{\leq \laG}$ the preimage of $\overline{\Gr^{\laG}}$ under this map.  By definition,
\begin{equation}
\label{eq: CB filtration}
\CB = \lim_{\longrightarrow} H_\ast^{\GO \rtimes \CC^\times \times \bF}( \cR^{\mathsf{sph}}_{\leq \laG} )
\end{equation}
where this directed system is defined with respect to proper pushforward maps. This is an algebra filtration \cite[Section 6(i)]{BFN1}.  
For $\laG$ minuscule (or equivalently, such that $\overline{\Gr^\laG} = \Gr^\laG$)  there are well-defined classes
\begin{equation}
\label{eq: def of monopole}
M_{\laG, f} \DEF f \cap [ \cR^{\mathsf{sph}}_{\leq \laG} ]  \in H^{(\GO\times \bF)\rtimes \CC^\times}_\ast(\cR^{\mathsf{sph}}_{\leq \laG} )  
\end{equation}
which we call {\bf dressed minuscule monopole operators}. Here the ``dressing'' $ f\in  H^\ast_{\bT \times \bF\times \CC^\times}(pt)^{\Sigma_\laG}$,  where $\Sigma_\laG \subset \Sigma$ is the stabilizer of $\laG$ and acts only on the $H_\bT^\ast(pt)$ part.  See  \cite[Section 6(ii)]{BFN1} for more details.

\begin{Remark}
\label{remark: monopoles not welldef}
It is predicted from physics that $\CB$ has a basis given by {dressed monopole operators} $M_{\laG, f}$ for {all} dominant $\laG$, not necessarily minuscule \cite{BDG}.  Such elements are not canonically defined in general \cite[Remark 6.5]{BFN1}, although they are well-defined in the associated graded with respect to the above filtration.     
\end{Remark}

Similarly, there is a map $\cR \rightarrow \cF$, and we denote by $\cR_{\leq w}$ the preimage of Schubert variety $\overline{\cF^w}$. $\ICB$ is a filtered algebra, analogously to (\ref{eq: CB filtration}), and in this case the fundamental classes $[\cR_{\leq w} ]$ form a basis for $\ICB$ over $H_{\bT \times \bF \times \CC^\times}^{\ast}(pt)$ as a left (or right) module.

Finally, there is a map $\cR^{\mathsf{ab}} \rightarrow \Gr_{\bT} = \TK / \TO$, and for $\laG \in \ZZ^\bv$ we define $\cR^{\mathsf{ab}}_\laG$ to be the preimage of the point $z^\laG \in \Gr_\bT$.  We will denote the corresponding fundamental class by $r_\laG \DEF [ \cR^{\mathsf{ab}}_\laG]$.  These elements form a basis for $\ACB$ as a left (or right) module over $H_{\bT\times \bF \times \CC^\times}^\ast(pt)$.  There is an explicit formula $ r_\laG r_\muG = A(\laG, \muG) r_{\laG+\muG}$ for their structure constants, given in \cite[Theorem 4(iii)]{BFN1}.

\subsection{Relationship between \texorpdfstring{$\ACB$ and $\ICB$}{Aab and A}}
\label{section: ACB and ICB}
Since there is a surjection $\Iwa\rightarrow \TO$ with pro-unipotent kernel, the inclusion $\cR^{\mathsf{ab}} \hookrightarrow \cR$ induces a pushforward map
\begin{equation*}
\ACB = H_\ast^{(\TO \times \bF)\rtimes \CC^\times}(\cR^{\mathsf{ab}}) \longrightarrow H_\ast^{(\TO\times \bF)\rtimes\CC^\times} (\cR) \cong H_\ast^{(\Iwa\times \bF)\rtimes\CC^\times}(\cR) = \ICB
\end{equation*}
This is an injective algebra homomorphism, see \cite[Section 3]{Web3} (c.f.~also \cite[Section 5(iii)]{BFN1}).

\subsection{The nilHecke algebra}
\label{sec: nilhecke algebra}
Recall that there is a natural action of the Weyl group $\Sigma$ on $H_\bT^\ast(pt)$, and it is a free module over its invariant subalgebra $H_\bG^\ast(pt) = H_\bT^\ast(pt)^\Sigma$ of rank $|\Sigma|$. The {\bf nilHecke algebra} for $\bG$ may be defined as 
\begin{equation*}
\nilHecke \DEF \End_{H_\bG^\ast(pt)} ( H_\bT^\ast(pt) )
\end{equation*}
See \cite[Section XI]{Kumar}, \cite{Dem}, or \cite[Section 7.1]{Ginzburg4}.  $\nilHecke$ is a matrix algebra over $H_\bG^\ast(pt)$ of rank $|\Sigma|$, and is generated by $H_\bT^\ast(pt)$ (acting on itself by multiplication) together with the divided difference operators $\partial_{i,r} $ for $i\in I$ and $1\leq r < \bv_i$.  These are the operators on $H_\bT^\ast(pt)$ defined by
\begin{equation}
\label{eq: localizaiton of divided difference}
\partial_{i,r} = \frac{1}{\eV_{i,r} - \eV_{i,r+1} } (1-s_{i,r})
\end{equation}
where $s_{i,r} \in \Sigma_{\bv_i}$ is a simple transposition.   These operators satisfy the braid relations of $\Sigma$, so in particular there is a well-defined element $\partial_w\in\nilHecke$ for any $w\in \Sigma$.  There is an obvious inclusion $\CC[\Sigma] \subset \nilHecke$, since $s_{i,r} = (\eV_{i,r} - \eV_{i,r+1}) \partial_{i,r}  + 1$.

As is well-known, the nilHecke algebra may also be realized as the homology $H^{\bB^-}_\ast(\bG /\bB^-)$, endowed with the convolution product $\alpha \ast \beta \DEF m_\ast ( q^\ast)^{-1} p^\ast( \alpha \otimes \beta)$ defined using the diagram
\begin{equation}
\label{eq: conv for G/B}
\bG/ \bB^- \times \bG / \bB^- \stackrel{p}{\longleftarrow} \bG \times \bG / \bB^- \stackrel{q}{\longrightarrow} \bG\times^{\bB^-} \bG / \bB^- \stackrel{m}{\longrightarrow} \bG / \bB^-
\end{equation}
analogously to \cite[Section 3]{BFN1}.  For each $w\in \Sigma$ there is a Schubert variety $X^w = \overline{\bB^- w \bB^- / \bB^-} \subset \bG / \bB^-$.  The element $\partial_{i,r}$ corresponds to the fundamental class $[X^{s_{i,r}}]$, and more generally $\partial_w = [X^w]$ for any $w\in \Sigma$, as follows from the study of Bott-Samelson resolutions.  The equality (\ref{eq: localizaiton of divided difference}) expresses the localization theorem at $\bT$--fixed points. Note that localization can be thought of as an algebra homomorphism into the smash product $\operatorname{Frac}(H_\bT^\ast(pt) ) \# \Sigma$.

For each simple reflection $s_{i,r}$ there is also a fundamental class $[\cR_{\leq s_{i,r}}] \in \ICB$, as in Section \ref{section: monopole}.  By abuse of notation we will denote this class by $\partial_{i,r}$, as is justified by the following:
\begin{Lemma}
Together with $H_{\bT}^\ast(pt) \subset \ICB$, the cycles $\partial_{i,r} = [\cR_{\leq s_{i,r}}] \in \ICB$ generate a copy of the nilHecke algebra.  Moreover, $\partial_w = [\cR_{\leq w}]$ for any $w\in \Sigma$.
\end{Lemma}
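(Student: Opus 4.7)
The key geometric observation is that for $w \in \Sigma \subset \widehat{\Sigma}$, the Schubert cell $\cF^w = \Iwa w$ already sits inside $\GO/\Iwa \subset \cF$, so its closure $\overline{\cF^w} = X^w$ is the finite-type Schubert variety inside $\bG/\bB^-$. Moreover, the preimage of $\bG/\bB^- \subset \cF$ under $\cR \to \cF$ equals $\GO \times^\Iwa \NO$: for $g \in \GO$ and $s \in \NO$, the BFN condition $gs \in \NO$ is automatic. The projection to $\bG/\bB^-$ therefore realizes this locus as a pro-vector bundle with fiber $\NO$, and $\cR_{\leq w}$ is its restriction to $X^w$.

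The plan is to produce a subalgebra embedding $\nilHecke = H_\ast^{\bB^-}(\bG/\bB^-) \hookrightarrow \ICB$ whose image is the subalgebra generated by $H_\bT^\ast(pt)$ and the classes $\partial_{i,r} = [\cR_{\leq s_{i,r}}]$. First I would invoke the Thom isomorphism for the pro-vector bundle above, combined with the equivalence of $\Iwa$- and $\bB^-$-equivariance via the pro-unipotent kernel, to get
\begin{equation*}
H_\ast^{(\Iwa \times \bF) \rtimes \CC^\times}(\GO \times^\Iwa \NO) \;\cong\; H_\ast^{\bB^-}(\bG/\bB^-) \otimes_\CC H_{\bF \times \CC^\times}^\ast(pt),
\end{equation*}
under which $[\cR_{\leq w}] \leftrightarrow [X^w] \otimes 1$, and the $H_\bT^\ast(pt)$-action coming from $H_\bT^\ast(pt) \hookrightarrow \ICB$ matches its action on $H_\ast^{\bB^-}(\bG/\bB^-)$ used in \cite[Section XI]{Kumar} to describe $\nilHecke$. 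Second I would check that the BFN convolution $\ast$ on $\ICB$ restricts, on classes supported on this pro-bundle, to the classical convolution \eqref{eq: conv for G/B}: since $\GO$ is closed under multiplication, the BFN convolution diagram restricts to one on $\GO \times^\Iwa \NO$, and the auxiliary condition $gs \in \NO$ being automatic makes the refined Gysin map enforcing it trivial, so that it is absorbed by the Thom isomorphism. Combining the two steps, the subalgebra in question is a copy of $\nilHecke$ inside $\ICB$, and the basis element $\partial_w = [X^w]$ corresponds to $[\cR_{\leq w}]$ for all $w \in \Sigma$.

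The main obstacle will be the diagram chase in the second step: one must verify carefully that the refined Gysin maps and Thom isomorphisms behave naturally under restriction to the finite-type locus $\GO/\Iwa \subset \cF$, and that all three maps in the BFN convolution diagram simultaneously simplify when so restricted. Once this compatibility is established, both the nilHecke relations for the $\partial_{i,r}$ and the identification $\partial_w = [\cR_{\leq w}]$ follow by transport of structure from the classical convolution algebra on $\bG/\bB^-$.
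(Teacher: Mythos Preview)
Your approach is correct and shares the paper's key geometric observation: for $w\in\Sigma$, the locus $\cR_{\leq w}$ is just the pro-vector bundle $\overline{\cF^w}\times\NO$ over the finite Schubert variety $X^w\subset\bG/\bB^-$, so that under Thom isomorphism the classes $[\cR_{\leq w}]$ match $[X^w]$.

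The main difference is how the convolution compatibility is handled. You propose to verify directly that the BFN convolution on $\ICB$ restricts to the classical convolution \eqref{eq: conv for G/B} on classes supported over $\bG/\bB^-$, and you rightly flag the diagram chase with refined Gysin maps as the delicate point. The paper sidesteps this entirely by factoring through the map $\bz^\ast:\ICB\hookrightarrow H_\ast^{(\Iwa\times\bF)\rtimes\CC^\times}(\cF)$ of \cite[Lemma~5.11]{BFN1}, which is already known to be an injective algebra homomorphism (it is the ``forget $\bN$'' map). Under $\bz^\ast$ one has $\bz^\ast([\cR_{\leq w}])=[\overline{\cF^w}]$, and then one only needs the much simpler fact that the inclusion $\bG/\bB^-\hookrightarrow\cF$ is compatible with convolution. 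In effect, the paper outsources your diagram chase to the construction of $\bz^\ast$, where it has already been done once and for all. Your direct argument would work, but invoking $\bz^\ast$ is the cleaner shortcut.
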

\begin{proof}
There is an algebra embedding $\bz^\ast: \ICB \hookrightarrow H_\ast^{(\Iwa\times \bF)\rtimes \CC^\times}(\cF$), as in \cite[Lemma 5.11]{BFN1}.  For any element $w\in \Sigma$ of the finite Weyl group, it is not hard to see that  $\cR_{\leq w}$ is the full preimage of $\overline{\cF^{w}}$ under the natural map $\GK \times^{\Iwa} \NO \rightarrow \cF$ (and in fact $\cR_{\leq w} \cong \overline{\cF^w}\times \NO$). By the construction of $\bz^\ast$ it follows that $\bz^\ast([ \cR_{\leq w}]) = [ \overline{\cF^{w}} ]$.  The cycle $\overline{\cF^w}$ is the image of $X^w$ under the natural embedding of the finite flag variety $\bG / \bB^- \subset \cF$.  This embedding is compatible with convolution, and therefore the elements $[\overline{\cF^w}]$ give a copy of the nilHecke algebra. Since $\bz^\ast$ is injective, this proves the claim. 
\end{proof}

Consequently there is a chain of  embeddings $\CC[\Sigma] \subset \nilHecke \subset\ICB$, and so $\ICB$ contains the symmetrizing idempotent $\idemCB\in\CC[\Sigma]$.  Geometrically, this can be written alternatively as
\begin{equation}
\label{eq: geometric idempotent}
\idemCB = \frac{1}{|\Sigma|} \operatorname{eu}(T) \cap [ \cR_{\leq w_0} ] = (-1)^{\ell(w_0)}\frac{1}{|\Sigma|} [ \cR_{\leq w_0}] \ast \Delta
\end{equation}
as follows from a standard nilHecke algebra calculation (e.g.~by the localization theorem on $\bG / \bB$).  Here $w_0\in \Sigma$ is the longest element, $T$ is the pull-back of the tangent bundle under $\cR_{\leq w_0} \rightarrow \overline{\cF^{w_0}} \cong \bG / \bB$, and $\Delta \in H_\bT^\ast(pt)$ is the product of the positive roots of $\bG$. Since $\idemCB$ is a full idempotent of $\nilHecke$, it is also a full idempotent of $\ICB$.

\subsection{Relationship between \texorpdfstring{$\CB$ and $\ICB$}{Asph or A}}
\label{sec: CB and ICB}

By \cite[Theorem 3.3]{Web3} there is an isomorphism $\CB \cong \idemCB \ICB \idemCB$, and in fact $\ICB$ is a matrix algebra over $\CB$ of rank $|\Sigma|$ (c.f.~also \cite[Section 4.2]{BEF}, \cite[Corollary 3.8]{FT1}).  Our goal in this section is to give a slightly different perspective on this result, which is more explicit, inspired by results from \cite{Sauter}. To this end, consider
\begin{equation}
\bimIK \DEF H_\ast^{(\Iwa \times \bF)\rtimes \CC^\times}( \cR^{\mathsf{sph}} ), \qquad \bimKI \DEF H_\ast^{(\GO \times \bF)\rtimes \CC^\times}(\cR)
\end{equation}
Using straightforward variations on the convolution diagram (3.2) in \cite{BFN1}, and on the proof of associativity \cite[Theorem 3.10]{BFN1}, we have:

\begin{Lemma}
There are convolution products
\begin{align*}
\ICB \otimes \bimIK \longrightarrow \bimIK,\qquad  &\bimIK \otimes \CB \longrightarrow \bimIK, \\
\bimKI \otimes \ICB \longrightarrow \bimKI, \qquad & \CB \otimes \bimKI \longrightarrow \bimKI, \\
\bimIK \otimes \bimKI \rightarrow \ICB, \qquad &\bimKI \otimes \bimIK \rightarrow \CB \label{eq: convolving bimodules}
\end{align*}
All compositions of convolution products are associative, whenever defined.
\end{Lemma}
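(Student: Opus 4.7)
The plan is to imitate the BFN convolution construction \cite[Section 3(iii)]{BFN1} uniformly, replacing the left/right quotients by either $\Iwa$ or $\GO$ as appropriate for each of the six products. For $H \in \{\Iwa, \GO\}$ on the left and $K \in \{\Iwa, \GO\}$ on the right, one forms the threefold convolution space
\[
\cT_{H,K} \DEF \bigl\{ (g_1, g_2, s) \in \GK \times \GK \times \NO : g_2^{-1} s \in \NO \text{ and } (g_1 g_2)^{-1} s \in \NO \bigr\},
\]
together with its quotient $\widetilde{\cT}_{H,K} \DEF \GK \times^{H} \cT_{H,K}/K$, where the inner $K$--action is on $(g_2, s)$ only. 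There is then the standard diagram
\[
\cR_H \times \cR_K \stackrel{p}{\longleftarrow} \cT_{H,K}/(H\times K) \stackrel{q}{\longrightarrow} \widetilde{\cT}_{H,K} \stackrel{m}{\longrightarrow} \cR_{\text{result}},
\]
where $p$ records the pairs $([g_2,s], [g_1, g_2^{-1} s])$, the map $q$ is a principal $H$--bundle, and $m$ is the multiplication map $[g_1, [g_2, s]] \mapsto [g_1 g_2, s]$. The target $\cR_{\text{result}}$ is $\cR^{\mathsf{sph}}$ when the ``outer'' quotient is by $\GO$ and $\cR$ when it is by $\Iwa$; this determines which of our six product maps is produced. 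The convolution product is then $\alpha \ast \beta \DEF m_\ast (q^\ast)^{-1} p^\ast (\alpha\boxtimes\beta)$, exactly as in \cite[Section 3(iii)]{BFN1}.

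I would first observe that $p$ is a placid ind-scheme map with pro-smooth fibres, so $p^\ast$ is defined (in the dimension-theoretic sense of \cite[Section 3.9]{Lon}); that $q$ is a principal $H$--bundle, so $(q^\ast)^{-1}$ makes sense on equivariant Borel-Moore homology; and that $m$ is ind-proper onto its image (it is a closed immersion on fibres over $\Gr$ or $\cF$ of bounded type), so $m_\ast$ exists. The equivariance groups $(H \times K \times \bF)\rtimes \CC^\times$ keep track of which homology theory we land in; for the mixed products $\ICB \otimes \bimIK \rightarrow \bimIK$ and $\CB\otimes \bimKI \rightarrow \bimKI$ the outer quotient uses $\Iwa$ resp.~$\GO$ and the inner uses $\GO$ resp.~$\Iwa$, while the unit axioms are trivially verified from the fact that the fundamental class of the fibre over $[1]$ acts as the identity.

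For associativity, I would mimic \cite[Theorem 3.10]{BFN1}: form the fourfold space
\[
\cT_{H_1,H_2,H_3}^{(4)} \DEF \{(g_1,g_2,g_3, s) : g_3^{-1} s \in \NO,\ (g_2 g_3)^{-1} s \in \NO,\ (g_1 g_2 g_3)^{-1} s \in \NO\},
\]
together with its various partial quotients, and verify that the triple convolution
\[
(\alpha \ast \beta) \ast \gamma = \alpha \ast (\beta \ast \gamma)
\]
both equal the obvious map out of this fourfold space, by base change between the relevant $p$'s, $q$'s and $m$'s. This is a direct diagram chase once the setup is fixed, and requires nothing beyond the compatibilities (proper base change, flat pullback commuting with proper pushforward, and the principal bundle identity $(q^\ast)^{-1} q^\ast = \id$) already used in \cite{BFN1}.

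The main obstacle is purely notational: keeping careful track of which quotient is by $\Iwa$ versus $\GO$ on which side in each of the six bimodule products, and correspondingly of which equivariance is used when applying $p^\ast$ and $m_\ast$. Once the correct convolution diagram is written down for each case, every nontrivial step (properness, well-defined pullback/pushforward, associativity) reduces verbatim to the arguments of \cite[Sections 3(iii)--3(iv)]{BFN1}, with the enlarged equivariance under $\bF$ and loop rotation handled exactly as in \cite[Appendix B(ii)(d)]{BFN3}. Since the analogue of Theorem \ref{thm: bfn thm}(c) holds in each case, no additional finiteness issue arises.
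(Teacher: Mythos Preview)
Your proposal is correct and follows exactly the approach the paper indicates: the paper does not give a standalone proof of this lemma, but simply remarks (immediately before the statement) that it follows from ``straightforward variations on the convolution diagram (3.2) in \cite{BFN1}, and on the proof of associativity \cite[Theorem 3.10]{BFN1}.'' Your write-up is a faithful and reasonably careful unpacking of precisely that remark, with the mixed $\Iwa/\GO$ quotients handled as expected.
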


We denote these convolution products by $\ast$.  In particular, convolution makes $\bimKI$ an $\CB$--$\ICB$--bimodule and $\bimIK$ an $\ICB$--$\CB$--bimodule, and defines bimodule homomorphisms $\bimKI\otimes_{\ICB} \bimIK \longrightarrow \CB$ and $\bimIK \otimes_{\CB} \bimKI \longrightarrow \ICB$.  
Now, consider the elements 
\begin{equation}
\Inc \DEF [\cR^{\mathsf{sph}}_{\leq 1}] \in \bimIK, \qquad \Av \DEF \frac{1}{|\Sigma|} \operatorname{eu}(T) \cap [\cR_{\leq w_0}] \in \bimKI
\end{equation}
where the latter is defined as in (\ref{eq: geometric idempotent}) (note that this is a $\GO$--equivariant cycle).  We will need some basic properties of these elements under the above convolution products.

\begin{Lemma}
\label{lemma: Av and Inc}
\begin{enumerate}
\item[(a)] For $a\in \CB$ (or $a\in \bimKI$) the convolution $\Inc \ast a = \operatorname{Forg}(a)$, where $\operatorname{Forg}$ is the natural homomorphism of restriction from $\GO$-- to $\Iwa$--equivariance.
\item[(b)] For $b \in \ICB$ (or $b\in \bimIK$) the convolution $b \ast \Inc = \pi_\ast( b)$, where $\pi : \cR \rightarrow \cR^{\mathsf{sph}}$ is the natural map $[g, s] \mapsto [g,s]$.
\item[(c)] $\Inc \ast \Av = \idemCB \in \ICB$.
\item[(d)] $\Av \ast \Inc = \unit \in \CB$ is the unit element.
\end{enumerate}
\end{Lemma}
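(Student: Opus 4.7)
The plan is to establish parts (a) and (b) first — both are statements that $\Inc$ behaves as a kind of unit for convolution, exchanging $\GO$- and $\Iwa$-equivariance — and then deduce (c) and (d) by combining them with the explicit form of $\Av$.

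For both (a) and (b) I would unpack the appropriate BFN-style convolution diagram, a variant of the one in \cite[Section 3(iii)]{BFN1}. The key geometric observation is that the support of $\Inc$, namely $\cR^{\mathsf{sph}}_{\leq 1}$, is the fibre of $\cR^{\mathsf{sph}}\to \Gr$ over $[1]\in \Gr$; concretely it consists of points $[1,s]$ with $s \in \NO$ and is isomorphic to $\NO$ as a $\GO$-space. For (a), restricting the convolution $\bimIK \otimes \CB \to \bimIK$ (or $\bimIK\otimes \bimKI \to \ICB$) to this support forces the first factor $g_1$ to act trivially on the $\Gr$-coordinate, so the multiplication map $(g_1,g_2,s)\mapsto (g_1 g_2, s)$ collapses to the identity on $\cR^{\mathsf{sph}}$ (respectively $\cR$), and the only net effect is the change of equivariance group $\GO \to \Iwa$, giving $\operatorname{Forg}(a)$. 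For (b), placing $\Inc$ on the right in $\ICB \otimes \bimIK \to \bimIK$ or $\bimKI \otimes \bimIK \to \CB$ dually forces the second factor to project to $[1]\in \Gr$, so the final pushforward in the convolution diagram becomes exactly the pushforward along $\pi \colon \cR \to \cR^{\mathsf{sph}}$ applied to the support of $b$.

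Parts (c) and (d) then reduce to short computations. For (c), applying (a) with $a = \Av \in \bimKI$ gives
$$\Inc \ast \Av = \operatorname{Forg}(\Av) = \tfrac{1}{|\Sigma|}\, \operatorname{eu}(T) \cap [\cR_{\leq w_0}],$$
which is precisely $\idemCB$ by the first equality of \eqref{eq: geometric idempotent}. For (d), applying (b) with $b = \Av$ gives $\Av \ast \Inc = \pi_\ast(\Av)$. By the product decomposition $\cR_{\leq w_0}\cong \bG/\bB^-\times \NO$ from the proof of the previous lemma, $\pi$ restricts to projection onto $\NO \cong \cR^{\mathsf{sph}}_{\leq 1}$, and $T$ is pulled back from the tangent bundle of $\bG/\bB^-$. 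The pushforward therefore reduces to $\bigl(\int_{\bG/\bB^-} \operatorname{eu}(T)\bigr)\cdot \unit = |\Sigma|\cdot \unit$, the equivariant integral evaluating to $|\Sigma|$ by Atiyah--Bott localization (each $\bT$-fixed point contributes $1$). Dividing by $|\Sigma|$ yields $\Av \ast \Inc = \unit$.

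The main obstacle is setting up the BFN convolution diagrams precisely in the mixed $\GO/\Iwa$-equivariant setting on infinite-dimensional placid ind-schemes, with several equivariance groups in play simultaneously. Once the diagrams are written down, the support conditions from $\Inc$ collapse them essentially formally; the genuine content of (c) and (d) lies in the geometric description of $\Av$ and the equivariant Euler class integral on $\bG/\bB^-$.
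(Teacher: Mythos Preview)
Your proposal is correct and follows the same line as the paper's proof. The paper dispatches (a) and (b) by reference to the proof that $\unit$ is the unit in \cite[Theorem 3.10]{BFN1} (and \cite[Lemma 5.7]{BFN1}), which is exactly the ``support of $\Inc$ collapses the convolution diagram'' argument you sketch; it then says (c) and (d) follow immediately from (a) and (b). Your deduction of (c) via $\operatorname{Forg}(\Av)=\idemCB$ using \eqref{eq: geometric idempotent}, and of (d) via the equivariant integral $\int_{\bG/\bB^-}\operatorname{eu}(T)=|\Sigma|$, simply makes explicit what the paper leaves to the reader. One small remark: in applying (b) to $b=\Av\in\bimKI$ you are implicitly using the case of the convolution $\bimKI\otimes\bimIK\to\CB$, which is what the paper intends (the ``$b\in\bimIK$'' in the statement appears to be a slip for $\bimKI$); the argument you give works verbatim in that setting.
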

\begin{proof}
Parts (a), (b) are similar to the proof that $\unit$ is the unit element in \cite[Theorem 3.10]{BFN1}. ee also parts (a), (b) of \cite[Lemma 5.7]{BFN1}.  Parts (c) and (d) follow immediately from parts (a) and (b), respectively.
\end{proof}

We now come to the main result of this section:
\begin{Theorem}
\label{thm: ICB matrix algebra over CB}

\begin{enumerate}
\item[(a)] The map $\CB \rightarrow \ICB$, $a \mapsto \Inc \ast a \ast \Av$ gives an isomorphism of algebras $\CB \cong \idemCB \ICB \idemCB$.  It sends $\unit \mapsto \idemCB$.

\item[(b)] The map $\bimIK \rightarrow \ICB$, $a\mapsto a\ast \Av$ identifies $\bimIK \cong \ICB \idemCB$, compatibly with bimodule structures under (a). It sends $\Inc \mapsto \idemCB$.

\item[(c)] The map $\bimKI \rightarrow \ICB$, $a\mapsto \Inc \ast a$ identifies $\bimKI \cong \idemCB \ICB$, compatibly with bimodule structures under (a).  It sends $\Av \mapsto \idemCB$.
\end{enumerate}
 Moreover $\ICB$ is a matrix algebra over $\CB$ of rank $|\Sigma|$, and is generated by its subalgebras $\nilHecke$ and $\CB$.
\end{Theorem}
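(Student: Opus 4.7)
The approach is to exploit Lemma~\ref{lemma: Av and Inc}(c), (d), which give the identities $\Inc \ast \Av = \idemCB$ in $\ICB$ and $\Av \ast \Inc = \unit$ in $\CB$. These make $(\Inc, \Av)$ into a Morita-style pair between $\CB$ and $\ICB$, and the three isomorphisms (a)--(c) will all follow from short convolution manipulations using these identities together with associativity.

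For (a), I would check that $\Phi(a) \DEF \Inc \ast a \ast \Av$ lands in $\idemCB\,\ICB\,\idemCB$ by left- and right-multiplying by $\idemCB = \Inc \ast \Av$ and collapsing $\Av \ast \Inc = \unit$. The same identity collapses the middle of $\Phi(a) \ast \Phi(b) = \Inc \ast a \ast (\Av \ast \Inc) \ast b \ast \Av = \Inc \ast (ab) \ast \Av$, so $\Phi$ is an algebra homomorphism, with $\Phi(\unit) = \Inc \ast \Av = \idemCB$. The candidate inverse $\Psi(c) = \Av \ast c \ast \Inc$ satisfies $\Psi\Phi(a) = (\Av \ast \Inc) \ast a \ast (\Av \ast \Inc) = a$ and $\Phi\Psi(c) = \idemCB \ast c \ast \idemCB = c$ for $c \in \idemCB\,\ICB\,\idemCB$. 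Parts (b) and (c) are analogous but simpler: for (b) the maps $a \mapsto a \ast \Av$ and $c \mapsto c \ast \Inc$ are mutually inverse between $\bimIK$ and $\ICB\,\idemCB$, and the bimodule compatibilities are immediate from associativity of convolution; (c) is dual.

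For the matrix algebra structure and the generation claim, I would argue as follows. The algebra $\nilHecke \cong \End_{H_\bG^\ast(pt)}(H_\bT^\ast(pt))$ is itself a matrix algebra of rank $|\Sigma|$ over $H_\bG^\ast(pt)$, in which $\idemCB$ is the rank-one projection onto the invariants $H_\bG^\ast(pt) \subset H_\bT^\ast(pt)$. I may therefore choose matrix units $\{e_{w,v}\}_{w,v \in \Sigma} \subset \nilHecke$ with $e_{1,1} = \idemCB$ and $\sum_{w} e_{w,w} = 1$. Since the inclusion $\nilHecke \hookrightarrow \ICB$ is unital (both units coincide with the class $[\cR_{\leq e}]$), the $e_{w,w}$ form a complete set of orthogonal idempotents in $\ICB$, and the Peirce decomposition together with the identification $e_{w,w}\,\ICB\,e_{v,v} = e_{w,1} \cdot (\idemCB\,\ICB\,\idemCB) \cdot e_{1,v} \cong \CB$ (via part (a)) yields an isomorphism $\ICB \cong M_{|\Sigma|}(\CB)$ and simultaneously exhibits $\ICB$ as generated by $\nilHecke$ and the subring $\CB = \idemCB\,\ICB\,\idemCB$. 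The genuinely nontrivial input is Lemma~\ref{lemma: Av and Inc} itself; once those identities are in hand, the remaining steps are essentially formal bookkeeping, and I do not foresee any substantial obstacle.
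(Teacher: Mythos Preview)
Your proposal is correct and follows essentially the same approach as the paper: the paper likewise uses Lemma~\ref{lemma: Av and Inc} to show that $a \mapsto \Inc \ast a \ast \Av$ lands in $\idemCB\,\ICB\,\idemCB$ with inverse $b \mapsto \Av \ast b \ast \Inc$, and then deduces the matrix algebra statement from the fact that $\nilHecke$ already has a complete set of minimal idempotents all isomorphic to $\idemCB$. You have simply written out the bookkeeping (algebra homomorphism check, Peirce decomposition via explicit matrix units) in more detail than the paper's terse three-sentence proof.
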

\begin{proof}
We prove part (a), with (b), (c) being similar. Using Lemma \ref{lemma: Av and Inc}, it is easy to see that our map $\CB \rightarrow \ICB$ lands in $\idemCB \ICB \idemCB$, and that its inverse $\idemCB \ICB \idemCB \rightarrow \CB$ is given by $b\mapsto \Av \ast b \ast \Inc$.  Therefore it is an isomorphism.

Finally, since $\nilHecke$ is a matrix algebra, with a complete set of minimal idempotents (all isomorphic to $\idemCB$), we get the final claim of the theorem.
\end{proof}

As a consequence, since $\idemCB\in \ICB$ is a full idempotent, we see that $\CB$ is Morita equivalent to $\ICB$.  This Morita equivalence is witnessed by the bimodules $\bimIK, \bimKI$. 

\begin{Remark}
As mentioned above, this result is a variation on \cite[Theorem 3.3]{Web3}.  Modulo addressing some issues of infinite-dimensionality, our two approaches should agree as a consequence of \cite{Sauter}.  
\end{Remark}

\begin{Remark}
Because of the embedding $\CC[\Sigma] \subset \nilHecke\subset \ICB$,  we get actions of $\Sigma$ on $\ICB$ by left and right multiplication.  Since $\idemCB \in \ICB$ is the symmetrizing idempotent for $\Sigma$, we can rephrase the theorem: the left $\Sigma$--invariants of $\ICB$ are isomorphic to $\bimKI$, the right $\Sigma$--invariants to $\bimIK$, and the $\Sigma\times\Sigma$-invariants are isomorphic to $\CB$.  This is analogous to \cite[Proposition 1]{Sauter}.
\end{Remark}

An essential property of the above isomorphism $\CB \cong \idemCB \ICB \idemCB$ is the following:

\begin{Lemma}
\label{lemma: embedding useful fact}
Let $a \in \CB$ and $b\in \ICB$ be such that $\operatorname{Forg}(a) = \pi_\ast(b)$.  Then
$$
\Inc \ast a \ast \Av = b \ast \idemCB
$$
\end{Lemma}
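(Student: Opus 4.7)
The plan is to reduce both sides of the claimed equality to a common expression, using associativity of convolution together with the four identities established in Lemma \ref{lemma: Av and Inc}. Nothing deeper should be required: the hypothesis $\operatorname{Forg}(a) = \pi_\ast(b)$ is precisely what is needed to bridge the two manipulations, and both sides live in $\ICB$ so there are no compatibility issues between bimodule structures once associativity is invoked.

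First I would handle the right-hand side. Since $b\in\ICB$ and $\idemCB\in\ICB$, the product $b\ast\idemCB$ is computed in $\ICB$. Using part (c) of Lemma \ref{lemma: Av and Inc} to substitute $\idemCB = \Inc\ast\Av$ and then reassociating via the associativity clause of the preceding lemma gives
$$
b\ast\idemCB \;=\; b\ast(\Inc\ast\Av) \;=\; (b\ast\Inc)\ast\Av.
$$
Now part (b) identifies $b\ast\Inc = \pi_\ast(b)\in\bimIK$, so the right-hand side equals $\pi_\ast(b)\ast\Av$, where this last convolution is under $\bimIK\otimes\bimKI\to\ICB$.

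Next I would perform the parallel manipulation on the left-hand side. Reassociating $\Inc\ast a\ast\Av$ as $(\Inc\ast a)\ast\Av$ and applying part (a) of Lemma \ref{lemma: Av and Inc} gives $\Inc\ast a = \operatorname{Forg}(a)\in\bimIK$, hence the left-hand side equals $\operatorname{Forg}(a)\ast\Av$. Again this convolution is under $\bimIK\otimes\bimKI\to\ICB$, so we are comparing two elements of $\ICB$ obtained by convolving elements of $\bimIK$ with the fixed element $\Av\in\bimKI$. The hypothesis $\operatorname{Forg}(a)=\pi_\ast(b)$ in $\bimIK$ then yields equality on the nose.

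I do not anticipate a genuine obstacle here; the only thing to check carefully is bookkeeping: at each step of associative rearrangement the two tensor factors must lie in a bimodule pair for which a convolution product was defined in the preceding lemma. This is immediate from the list there, since the rearrangements only involve the pairs $(\ICB,\bimIK)$, $(\bimIK,\CB)$, $(\bimIK,\bimKI)$, and $(\CB,\bimKI)$, all of which are available.
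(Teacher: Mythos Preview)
Your argument is correct and follows essentially the same approach as the paper: both proofs reduce the claim to the hypothesis $\operatorname{Forg}(a)=\pi_\ast(b)$ via the identities of Lemma~\ref{lemma: Av and Inc} and associativity of convolution. Your version is marginally more direct, computing both sides to $\operatorname{Forg}(a)\ast\Av$ and $\pi_\ast(b)\ast\Av$ respectively, whereas the paper phrases the same manipulation as applying the inverse of the isomorphism $\bimIK\cong\ICB\idemCB$ from Theorem~\ref{thm: ICB matrix algebra over CB}(b); the underlying computations are the same.
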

\begin{proof}
$\Inc \ast a = \operatorname{Forg}(a)$ by Lemma \ref{lemma: Av and Inc}(a), which is an element of $\bimIK$. Meanwhile, $b\ast \idemCB \in \ICB \idemCB$. It suffices to verify that these elements correspond to one another under the isomorphism $\bimIK \cong \ICB \idemCB$ from part (b) of the previous theorem.  Note that the inverse to this isomorphism is the map $g\mapsto g\ast \Inc$.  Applying this inverse map to $b\ast \idemCB$, by Lemma \ref{lemma: Av and Inc} we get
$$
(b \ast  \idemCB)\ast \Inc = b \ast \Inc = \pi_\ast(b) = \operatorname{Forg}(a),
$$
which proves the claim.
\end{proof}

\subsection{Generation by minuscules} 
\label{sec: generation by minuscules general case}
  
Let $\laG$ be a minuscule coweight for $\bG$.  Then there is an isomorphism $\bG / \bP \cong  \Gr^\laG = \overline{\Gr^\laG}$ defined by $g \bP \mapsto g z^\lambda$, where $\bP = \bP_\laG \subset \bG$ is a parabolic subgroup (the stabilizer of $z^\laG$ in $\bG$).  Note that since $\laG$ is dominant $\bB^- \subset \bP$.  The associated parabolic Weyl group $\Sigma_\laG \subset\Sigma$ is the stabilizer of $\laG$, and we will denote its longest element by $w_\laG$.  

Recall from Section \ref{section: monopole} that there are elements $M_{\laG, f}   \in \CB$, for $f \in P^{\Sigma_\laG}$.
The following result generalizes \cite[Lemma 4.5]{KTWWY2}:
\begin{Lemma}
\label{lemma: minuscule monopole reexpressed}
Suppose that $\laG$ is a minuscule coweight.  Under $\CB \stackrel{\sim}{\rightarrow} \idemCB \ICB \idemCB$, for any $f\in P^{\Sigma_\laG}$ we have
$$
M_{\laG, f} \mapsto \partial_{w_0 w_\laG} f r_\laG \cdot \idemCB
$$
\end{Lemma}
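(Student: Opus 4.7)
The plan is to apply Lemma \ref{lemma: embedding useful fact} with $a = M_{\laG, f}$ and $b = \partial_{w_0 w_\laG} \ast f \ast r_\laG$. Granted the hypothesis of that lemma, its conclusion gives
\[
\Inc \ast M_{\laG, f} \ast \Av \;=\; \bigl(\partial_{w_0 w_\laG} \ast f \ast r_\laG\bigr) \ast \idemCB,
\]
which is exactly the asserted image of $M_{\laG, f}$ under $\CB \cong \idemCB \ICB \idemCB$ (using Theorem \ref{thm: ICB matrix algebra over CB}(a)). So the entire claim reduces to verifying the identity
\[
\operatorname{Forg}(M_{\laG, f}) \;=\; \pi_\ast\bigl(\partial_{w_0 w_\laG} \ast f \ast r_\laG\bigr) \quad \text{in } \bimIK.
\]

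Next I would dispose of the factor $f$. Since $f \in H^\ast_\bT(pt)^{\Sigma_\laG} = H^\ast_{\bP}(pt)$ (with $\bP = \bP_\laG$), it is the pullback of a class on $\bG/\bP \cong \overline{\Gr^\laG}$ along the natural projections from both $\cR$ and $\cR^{\mathsf{sph}}$. The projection formula, combined with the compatibility of cap products with proper pushforward and with convolution, should let me extract $f$ from both sides. This reduces the problem to the case $f = 1$, namely
\[
\operatorname{Forg}\bigl([\cR^{\mathsf{sph}}_{\leq \laG}]\bigr) \;=\; \pi_\ast\bigl(\partial_{w_0 w_\laG} \ast r_\laG\bigr).
\]

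The remaining step is geometric. Unwinding the convolution diagram for $\ICB$, I expect $[\cR_{\leq w_0 w_\laG}] \ast [\cR^{\mathsf{ab}}_\laG]$ to be represented by a natural cycle in $\cR$ whose image in $\cF$ is the translated finite Schubert variety $X^{w_0 w_\laG} \cdot [z^\laG]$, with $\NO$-fibre inherited from $\cR^{\mathsf{ab}}_\laG$. The crucial combinatorial input is that $w_0 w_\laG$ is the longest minimal-length coset representative of $\Sigma/\Sigma_\laG$, so that $\ell(w_0 w_\laG) = \dim \bG/\bP$. Under the smooth fibration $\bG/\bB^- \to \bG/\bP$, this implies that $X^{w_0 w_\laG}$ maps birationally onto $\bG/\bP$, yielding $\pi_\ast [X^{w_0 w_\laG}] = [\bG/\bP]$. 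Translating this back via $\overline{\Gr^\laG} \cong \bG/\bP$ for minuscule $\laG$, this should give exactly $\pi_\ast(\partial_{w_0 w_\laG} \ast r_\laG) = [\cR^{\mathsf{sph}}_{\leq \laG}]$.

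The hard part will be making the last paragraph fully precise at the cycle level: one must verify that the convolution $\partial_{w_0 w_\laG} \ast r_\laG$ occurs with multiplicity one along the expected cycle, with no extraneous Euler class factors arising from the normal bundle of $\cR^{\mathsf{ab}}_\laG \subset \cR$ in the convolution diagram, and that the fibrewise $\NO$-directions on the two sides match under $\pi_\ast$. Both of these should follow from the minuscule assumption on $\laG$, which ensures that $\overline{\Gr^\laG}$ is smooth and that the relevant strata meet transversely in the convolution.
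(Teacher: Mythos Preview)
Your outline is correct and follows the same skeleton as the paper: reduce via Lemma~\ref{lemma: embedding useful fact} to the identity $\operatorname{Forg}(M_{\laG,f}) = \pi_\ast(\partial_{w_0 w_\laG}\, f\, r_\laG)$, and feed in the geometric fact that the restriction $X^{w_0 w_\laG} \to \bG/\bP$ is birational (so $\pi_\ast \partial_{w_0 w_\laG} = [\bG/\bP]$).

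The execution differs in two ways worth noting. First, your ``extract $f$ via projection formula'' step is the same content as the paper's localization argument, but less innocuous than you suggest: what you actually need is $\partial_w \ast (f\cap r_\laG) = (\pi^\ast f)\cap(\partial_w\ast r_\laG)$, i.e.\ that right-multiplying $\partial_w$ by $f$ in the nilHecke algebra agrees with left-capping by the pullback of $f$ from $\bG/\bP$. The paper proves exactly this identity $\partial_w\, g = \pi^\ast(g)\cap \partial_w$ by localizing at $\bT$--fixed points, rather than reducing to $f=1$. Second, the paper does \emph{not} carry out your proposed direct cycle-level computation of $\partial_w\ast r_\laG$ inside $\ICB$. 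Instead it transports everything to the pure flag variety $\bG/\bB^- \to \bG/\bP$ and invokes \cite[Proposition~A.2]{BFN2} for the dictionary; this is where the Euler class $\operatorname{eu}(z^\laG\NO/z^\laG\NO\cap\NO)$ enters --- it is the translation factor between $r_\laG\in\ICB$ and the point class in the flag-variety picture, appearing symmetrically on both sides and hence cancelling. So your expectation that no Euler class obstructs the direct BFN computation is right; the paper's Euler class is an artefact of its chosen reduction, not a correction term you missed.

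Your route is more self-contained but leaves the convolution $\partial_w\ast r_\laG$ as a sketch; the paper's route is shorter because it offloads the matter-direction bookkeeping to a cited result and does the remaining work in the classical Schubert calculus of $\bG/\bB^-$.
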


\begin{proof}
Denote $w = w_0 w_\laG$ for brevity. By Lemma \ref{lemma: embedding useful fact}, it suffices to prove that $\pi_\ast(\partial_w f r_\lambda) = \operatorname{Forg}(M_{\laG, f})$.  This reduces to a calculation for finite dimensional (partial) flag varieties.  Consider the natural proper map $\pi: \bG / \bB^- \rightarrow \bG / \bP$.  Its restriction to the Schubert variety $\pi: X^w \rightarrow \bG / \bP$ is a proper birational map, since the element $w$ is the minimal length coset representative of $w_0 \Sigma_\laG$ (by e.g.~\cite[Corollary 7.1.18]{Kumar}).  As in Section \ref{sec: nilhecke algebra} we can think of $\partial_w = [ X^w]$, so we conclude that $\pi_\ast (\partial_w) = [\bG / \bP]$. For any $g \in H_\bG^\ast(\bG/\bP) \cong H_\bT^\ast(pt)^{W_\laG}$, 
$$
g \cap [ \bG / \bP ]  =  g \cap \pi_\ast \partial_w = \pi_\ast( \pi^\ast(g) \cap \partial_w)
$$
by the projection formula. By the localization theorem at $\bT$--fixed points on $\bG /\bB^-$, we can write $\partial_w = \sum_{v \leq w} a_v v$ for some $a_v \in \operatorname{Frac} H_\bT^\ast(pt)$.\footnote{Since $\pi_\ast (\partial_w) = [\bG/ \bP]$, these coefficients have the property that, for any fixed coset $x W_\laG$,  the sum  
$$\sum_{\substack{v\leq w, \\ v W_\laG = x W_\laG}} a_v  = \frac{1}{\operatorname{eu}(T_x \bG/ \bP)} $$.}
We find that, as elements of $\nilHecke$,
$$
\pi^\ast(g) \cap \partial_w = \sum_{v \leq w} v(g) a_v  v  = \sum_{v\leq w} a_v v \cdot g = \partial_w g
$$
Applying this discussion to $g = f \cdot \operatorname{eu}( z^\laG \NO / z^\laG \NO \cap \NO)$ proves the claim, by \cite[Proposition A.2]{BFN2}
\end{proof}

We say that $\CB$ is {\bf generated by minuscules} if it is generated as an algebra by $H_{\bG\times\bF\times\CC^\times}^\ast(pt)$ and the dressed minuscule monopole operators $M_{\laG, f}$.

\begin{Corollary}
\label{lem: generated by minuscules, generators of Iwahori}
If $\CB$ is generated by minuscules, then $\ICB$ is generated by its subalgebras $\nilHecke$ and $\ACB$.
\end{Corollary}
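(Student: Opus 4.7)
The plan is to reduce directly to Lemma \ref{lemma: minuscule monopole reexpressed}. The final sentence of Theorem \ref{thm: ICB matrix algebra over CB} already tells us that $\ICB$ is generated by $\nilHecke$ together with the subalgebra $\idemCB \ICB \idemCB \cong \CB$. So I would first observe that it suffices to show that under the embedding $\CB \hookrightarrow \ICB$, $a \mapsto \Inc \ast a \ast \Av$, the image of $\CB$ lies inside the subalgebra of $\ICB$ generated by $\nilHecke$ and $\ACB$. Granted this, any product of elements of $\nilHecke$ and (the image of) $\CB$ in $\ICB$ is automatically a product of elements from $\nilHecke$ and $\ACB$, and we are done.

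Next, using the hypothesis that $\CB$ is generated by minuscules, I would reduce to checking the above containment on the generators $H_{\bG \times \bF \times \CC^\times}^\ast(pt)$ together with the dressed minuscule monopoles $M_{\laG, f}$. For $\xi \in H_{\bG \times \bF \times \CC^\times}^\ast(pt)$, the image $\xi \cap \unit \mapsto \xi \cdot \idemCB$ is visibly in the subalgebra generated by $\nilHecke$ and $\ACB$: the $H_\bG^\ast(pt) \subset H_\bT^\ast(pt)$-part lies in $\nilHecke$, the $H_{\bF \times \CC^\times}^\ast(pt)$-part lies in $\ACB$ by Theorem \ref{thm: bfn thm}(b), and $\idemCB \in \nilHecke$. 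For a dressed minuscule monopole, Lemma \ref{lemma: minuscule monopole reexpressed} provides the explicit identity
$$
M_{\laG, f} \;\longmapsto\; \partial_{w_0 w_\laG} \cdot f \cdot r_\laG \cdot \idemCB
$$
in $\ICB$, with $\partial_{w_0 w_\laG}$, $f \in H_\bT^\ast(pt)^{\Sigma_\laG}$, and $\idemCB$ all lying in $\nilHecke$, while $r_\laG$ lies in $\ACB$. Hence this image is a product of elements drawn from $\nilHecke$ and $\ACB$, as required.

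I do not anticipate any real obstacle beyond correctly identifying the building blocks on the right-hand side of Lemma \ref{lemma: minuscule monopole reexpressed}: the substance of the corollary is already present in Theorem \ref{thm: ICB matrix algebra over CB} (\nilHecke\ together with $\CB$ suffice to generate $\ICB$) and in that lemma (minuscule monopoles are products of nilHecke elements with $r_\laG \in \ACB$). The corollary is essentially a packaging of these two results, once the minuscule generation hypothesis on $\CB$ is invoked.
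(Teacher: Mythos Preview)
Your proposal is correct and follows essentially the same route as the paper: both arguments invoke the final sentence of Theorem \ref{thm: ICB matrix algebra over CB} to reduce to showing the image of $\CB$ in $\idemCB \ICB \idemCB$ lies in the subalgebra generated by $\nilHecke$ and $\ACB$, and then use Lemma \ref{lemma: minuscule monopole reexpressed} to verify this on the minuscule monopole generators. One small imprecision: the dressing $f$ lies in $H_{\bT\times\bF\times\CC^\times}^\ast(pt)^{\Sigma_\laG}$, not just $H_\bT^\ast(pt)^{\Sigma_\laG}$, but this is harmless since in fact all of $H_{\bT\times\bF\times\CC^\times}^\ast(pt)$ already sits inside $\ACB$ by the analogue of Theorem \ref{thm: bfn thm}(b).
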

\begin{proof}
Consider the subalgebra $\ICB' \subseteq \ICB$ generated by $\nilHecke$ and $\ACB$.  To prove that $\ICB' = \ICB$, by the final part of Theorem \ref{thm: ICB matrix algebra over CB} it suffices to show that the images of generators of $\CB$ land in $\idemCB \ICB' \idemCB $ under the isomorphism $\CB \cong \idemCB \ICB \idemCB$.  Lemma \ref{lemma: minuscule monopole reexpressed} shows that this is true for the elements $M_{\laG, f}$.  Since we assume $\CB$ is generated these elements, this proves the claim.
\end{proof}

There is a sort of converse to Lemma \ref{lemma: minuscule monopole reexpressed}, which we will need later: for $\muG$ in the Weyl orbit of a dominant minuscule coweight $\laG$, we can express $r_\muG \in \ACB\subset \ICB$ using only $\nilHecke$ and the dressed monopole operators $M_{\laG, f}$.  To prove this, we will use the following basic fact about nilHecke algebras:

\begin{Lemma}
\label{lemma: dual bases}
There exist bases $\{x_w\}_{w \in \Sigma}$ and $\{y_{w}\}_{w\in \Sigma}$ for $H_\bT^\ast(pt)$ over $H_\bG^\ast(pt)$, such that the following identity holds in $\nilHecke$:
$$
1 = \sum_{w \in  \Sigma} x_{w} \partial_{w_0} y_{w} 
$$
\end{Lemma}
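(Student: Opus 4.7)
The plan is to realize $\nilHecke \cong \End_{H_\bG^\ast(pt)}(H_\bT^\ast(pt))$ and produce the required identity by choosing dual bases with respect to a natural pairing induced by $\partial_{w_0}$. First I would note that for each simple reflection $s_{i,r}$ the operator $\partial_{i,r}$ lands in $s_{i,r}$--invariants: applying $s_{i,r}$ to $(f - s_{i,r} f)/(\eV_{i,r}-\eV_{i,r+1})$ gives the same element back. Composing, $\partial_{w_0}$ sends $H_\bT^\ast(pt)$ into the full $\Sigma$--invariants, i.e.~$\partial_{w_0} : H_\bT^\ast(pt) \to H_\bG^\ast(pt)$. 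This lets me define an $H_\bG^\ast(pt)$--bilinear pairing
\[
\langle f, g\rangle \DEF \partial_{w_0}(fg) \in H_\bG^\ast(pt).
\]

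Next I would establish that this pairing is perfect, i.e.~it induces an isomorphism $H_\bT^\ast(pt) \stackrel{\sim}{\to} \Hom_{H_\bG^\ast(pt)}(H_\bT^\ast(pt), H_\bG^\ast(pt))$ of free rank--$|\Sigma|$ modules. This is a classical fact, due essentially to Bernstein--Gelfand--Gelfand and Demazure, and can be proved for instance by exhibiting the Schubert basis $\{S_w\}_{w \in \Sigma}$ together with its BGG--dual basis: the divided difference calculus $\partial_v(S_w) = S_{wv^{-1}}$ when $\ell(wv^{-1}) = \ell(w) - \ell(v)$ and zero otherwise shows that $\partial_{w_0}(S_v \cdot T_w) = \delta_{vw}$ for an appropriate dual family $\{T_w\}$. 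References include \cite[Chapter XI]{Kumar}, as cited earlier in the paper. Alternatively, one can verify non-degeneracy by localizing at $\operatorname{Frac}(H_\bT^\ast(pt))$, where $\partial_{w_0} = \sum_{w \in \Sigma} (-1)^{\ell(w)} w / \Delta$ makes the pairing visibly non-degenerate over the fraction field, and then check integrality by a degree count.

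Given the perfect pairing, I would choose any basis $\{y_w\}_{w \in \Sigma}$ of $H_\bT^\ast(pt)$ over $H_\bG^\ast(pt)$ and let $\{x_w\}_{w \in \Sigma}$ be the dual basis determined by $\partial_{w_0}(x_v y_w) = \delta_{vw}$. Writing $\nilHecke = \End_{H_\bG^\ast(pt)}(H_\bT^\ast(pt))$ (as recalled in Section~\ref{sec: nilhecke algebra}), I can test the identity $1 = \sum_w x_w \partial_{w_0} y_w$ on an arbitrary element $f \in H_\bT^\ast(pt)$. Expanding $f = \sum_v c_v x_v$ with $c_v \in H_\bG^\ast(pt)$, using $H_\bG^\ast(pt)$--linearity of $\partial_{w_0}$ on invariants, I compute
\[
\Bigl(\sum_w x_w \partial_{w_0} y_w\Bigr)(f) = \sum_w x_w \partial_{w_0}\Bigl(y_w \sum_v c_v x_v\Bigr) = \sum_{v,w} c_v x_w \delta_{vw} = f,
\]
as desired.

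The only nontrivial step is the non-degeneracy of $\langle -,-\rangle$; everything else is formal. Since this non-degeneracy is standard and well-documented in the nilHecke literature, the whole argument is short. I would therefore write the proof as: (1) cite/recall the existence of dual bases for the pairing $\partial_{w_0}(fg)$, and (2) carry out the one-line check above to deduce the completeness identity in $\nilHecke$.
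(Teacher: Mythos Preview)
Your proposal is correct and follows essentially the same strategy as the paper: both reduce the identity to the non-degeneracy of the pairing $\langle f,g\rangle = \partial_{w_0}(fg)$ on $H_\bT^\ast(pt)$ over $H_\bG^\ast(pt)$, and then observe that the completeness relation is the standard consequence of having dual bases. The only difference is in how non-degeneracy is justified: you invoke the BGG/Demazure Schubert calculus (or a localization argument), whereas the paper identifies the pairing with the $\bG$--equivariant Poincar\'e pairing on $\bG/\bB$ and appeals to equivariant Poincar\'e duality. Both justifications are standard and equally valid; the paper in fact mentions the Schubert-polynomial route just before its proof.
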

In type A, one can define such bases using Schubert polynomials, see \cite[Remark 2.5.2]{KLMS}.
\begin{proof}
Consider the bilinear pairing $H_\bT^\ast(pt) \otimes_{H_\bG^\ast(pt)} H_\bT^\ast(pt) \longrightarrow H_\bG^\ast(pt)$ defined by $f\otimes g \mapsto \partial_{w_0} (fg)$.  Our claim is equivalent to the non-degeneracy of this pairing, i.e.~that dual bases exist so that $\partial_{w_0}(x_w y_v) = \delta_{w,v}$.  This pairing can be identified with the $\bG$--equivariant Poincar\'e pairing for $\bG / \bB$, see e.g.~\cite[Section 3]{HolmSja}. Therefore, non-degeneracy follows from equivariant Poincar\'e duality \cite[Proposition 1]{Brion2}.
\end{proof}

\begin{Proposition}
\label{prop: abelian as monopole}
Let $\laG$ be dominant minuscule, and $\muG \in \Sigma \laG$.  Then there exist elements $x_p, y_p \in H_{\bT\times \CC^\times}^\ast(pt)$ and $f_p \in H_{\bT\times \bF\times\CC^\times}^\ast(pt)$ for $p$ running in some finite index set, such that
$$
r_\muG = \sum_p  x_p ( \Inc \ast M_{\laG, f_p} \ast \Av) \partial_{w_0} y_p
$$
as elements of $\ICB$.
\end{Proposition}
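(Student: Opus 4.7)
The plan is to translate the desired identity via Lemma \ref{lemma: minuscule monopole reexpressed} into a computation purely inside the subalgebra of $\ICB$ generated by $\nilHecke$ and $\ACB$, then solve an ``evaluation at the Weyl orbit $\Sigma\laG$'' problem using Lemma \ref{lemma: dual bases}.  First, Lemma \ref{lemma: minuscule monopole reexpressed} gives $\Inc \ast M_{\laG, f_p} \ast \Av = \partial_{w_0 w_\laG} f_p r_\laG \idemCB$, and a short calculation in the smash product $\operatorname{Frac}(H_\bT^\ast) \# \Sigma$ (using $\partial_{w_0} = \Delta^{-1} \sum_{w}(-1)^{\ell(w)} w$ and $v \cdot \Delta^{-1} = (-1)^{\ell(v)} \Delta^{-1} v$) shows that $\idemCB \partial_{w_0} = \partial_{w_0}$ in $\nilHecke$.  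The problem thus reduces to producing $x_p, y_p \in H_{\bT \times \CC^\times}^\ast(pt)$ and $\Sigma_\laG$-invariant $f_p$ such that
\[
r_\muG \;=\; \sum_p x_p \, \partial_{w_0 w_\laG} f_p r_\laG \partial_{w_0} \, y_p \quad \text{in } \ICB.
\]

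Second, I would compute the building block $\partial_{w_0 w_\laG} f r_\laG \partial_{w_0}$ after localization, where $\ICB$ embeds into $\operatorname{Frac}(H^\ast_{\bT \times \bF \times \CC^\times}) \# \widehat{\Sigma}$ with $r_\nu$ corresponding to a twist $c_\nu z^\nu$ by an explicit Euler class as in \cite[Proposition A.2]{BFN2}.  Expanding $\partial_{w_0 w_\laG} = \sum_{v \in \Sigma^{\Sigma_\laG}} a_v v$ over minimal coset representatives of $\Sigma/\Sigma_\laG$ and repeatedly applying the commutation rules $v g = v(g) v$ and $v z^\laG = z^{v\laG} v$, one obtains
\[
\partial_{w_0 w_\laG} f r_\laG \partial_{w_0} \;=\; \Big( \sum_{\nu \in \Sigma\laG} b_\nu(f) \, r_\nu \Big) \partial_{w_0}
\]
for coefficients $b_\nu(f) \in \operatorname{Frac}(H_\bT^\ast)$ depending $H_\bG^\ast$-linearly on $f \in (H_\bT^\ast)^{\Sigma_\laG}$ and supported on the orbit $\Sigma\laG$.

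Third, since $H_\bT^\ast$ is free of rank $|\Sigma\laG|$ over $(H_\bT^\ast)^{\Sigma_\laG}$, the assignment $f \mapsto (b_\nu(f))_\nu$ is essentially evaluation at coset representatives and is therefore nondegenerate.  I would choose a finite collection of $f$'s so that appropriate $H_\bT^\ast$-linear combinations of the terms above isolate $r_\muG \partial_{w_0}$ alone.  Applying Lemma \ref{lemma: dual bases} in the form $r_\muG = r_\muG \cdot \sum_w x_w \partial_{w_0} y_w$ then splits the trailing $\partial_{w_0}$ across a dual basis, producing an expression of exactly the prescribed shape $\sum_p x_p (\cdots) \partial_{w_0} y_p$ in which the final $x_p, y_p$ lie in $H_{\bT \times \CC^\times}^\ast(pt)$ rather than only in its fraction field.

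The main obstacle will be tracking the Euler-class twists $c_\nu$ relating $r_\nu$ and $z^\nu$, since these modify the $b_\nu(f)$ in a $\nu$-dependent way and must not introduce denominators beyond those already absorbed by $\partial_{w_0}$.  That this linear system over $H_\bG^\ast$ admits polynomial solutions is controlled by the nondegeneracy of the Poincar\'e-type pairing in Lemma \ref{lemma: dual bases}, together with the explicit form of the Euler twists from \cite[Proposition A.2]{BFN2}.
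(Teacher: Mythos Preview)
Your overall architecture is right and matches the paper's: rewrite $\Inc \ast M_{\laG,f} \ast \Av$ via Lemma~\ref{lemma: minuscule monopole reexpressed}, produce an identity of the form $r_\muG \idemCB = \sum_i g_i (\Inc \ast M_{\laG,f_i}\ast \Av)$, then use $\idemCB \partial_{w_0}=\partial_{w_0}$ together with Lemma~\ref{lemma: dual bases} and the commutation $r_\muG x = \mathsf{u}_\muG(x) r_\muG$ to strip off the $\idemCB$ and obtain $r_\muG$ itself.  (You do not mention this last commutation, but it is exactly what is needed to pass from your isolated $r_\muG \partial_{w_0}$ to $r_\muG x_w \partial_{w_0}$; without it your step~6 does not parse.)

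The genuine gap is in your step~5.  After localization you correctly arrive at a linear system over $\operatorname{Frac}(H_\bT^\ast(pt))$ asking for $x_p\in H_\bT^\ast(pt)$ and $f_p\in H_\bT^\ast(pt)^{\Sigma_\laG}$ with $\sum_p x_p\, b_\nu(f_p)=\delta_{\nu,\muG}$.  Nondegeneracy of $f\mapsto (v_\nu(f))_\nu$ only gives solutions over the fraction field; the $H_\bT^\ast$-span of pairs $(x\,v_\nu(f))_\nu$ is \emph{not} all of $(H_\bT^\ast)^{|\Sigma\laG|}$ (already for $\GL_2$ with $\laG=(1,0)$ the vector $(1,0)$ is missing).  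Polynomial solutions exist only because the right-hand side has special divisibility coming from the Euler-class factors and the $a_v$'s, and you do not establish this.  Invoking ``Lemma~\ref{lemma: dual bases} together with the Euler twists'' is not a proof of that integrality.

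The paper avoids this entirely.  Rather than localizing and solving a system, it observes that $r_\muG^{\mathsf{sph}}$ lies in $H_\ast^{(\TO\times\bF)\rtimes\CC^\times}(\cR^{\mathsf{sph}}_{\leq\laG}) \cong H_\bT^\ast(pt)\otimes_{H_\bG^\ast(pt)} H_\ast^{(\GO\times\bF)\rtimes\CC^\times}(\cR^{\mathsf{sph}}_{\leq\laG})$ by the change-of-groups isomorphism \cite[Lemma 5.3]{BFN1}.  Since for minuscule $\laG$ the latter is spanned over $H_\bG^\ast(pt)$ by the $M_{\laG,f}$, one gets $r_\muG^{\mathsf{sph}}=\sum_i g_i\operatorname{Forg}(M_{\laG,f_i})$ with honestly polynomial $g_i$, for free.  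Then $\pi_\ast(r_\muG)=r_\muG^{\mathsf{sph}}$ and Lemma~\ref{lemma: embedding useful fact} give $r_\muG\idemCB = \sum_i g_i(\Inc\ast M_{\laG,f_i}\ast\Av)$ directly.  This structural input is exactly what your localization argument is implicitly trying to reprove, and is the missing ingredient in your proposal.
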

\begin{proof}
In Section \ref{section: ACB and ICB} we used the embedding $\cR^{\mathsf{ab}}\subset \cR$, but we note that there is a similar embedding $\cR^{\mathsf{ab}} \subset \cR^{\mathsf{sph}}$.  Under the latter $\cR^{\mathsf{ab}}_\muG \subset \cR^{\mathsf{sph}}_{\leq \laG}$, so there is a corresponding fundamental class
$$
r_\muG^{\mathsf{sph}} \in H_\ast^{(\TO\times\bF)\rtimes\CC^\times}(\cR^{\mathsf{sph}}_{\leq \laG}) \cong H_{\bT}^\ast(pt) \otimes_{H_\bG^\ast(pt)} H_\ast^{(\GO\times \bF)\rtimes \CC^\times}(\cR^{\mathsf{sph}}_{\leq \laG} ) 
$$
where the isomorphism is \cite[Lemma 5.3]{BFN1}.  Therefore there exist elements $g_i \in H_\bT^\ast(pt)$ and $f_i \in H_{\bT^\times \bF\times \CC^\times}^\ast(pt)^{\Sigma_\laG}$ such that $r^{\mathsf{sph}}_\muG = \sum_i g_i \operatorname{Forg}( M_{\laG, f_i})$.  It is clear that the restriction of $\pi : \cR \rightarrow \cR^{\mathsf{sph}}$ to $\cR^{\mathsf{ab}}_\muG$ is an isomorphism, so $\pi_\ast (r_\muG) = r_\muG^{\mathsf{sph}}$.  We can think of $r_\muG^{\mathsf{sph}}\in \bimIK$, and as in Lemma \ref{lemma: embedding useful fact} we deduce that in $\ICB$
$$
r_\muG \ast \idemCB = r_\mu^{\mathsf{sph}} \ast \Av = \sum_i g_i ( \Inc \ast M_{\laG, f_i} \ast \Av)
$$
Now, by \cite[Lemma 3.21]{BFN1}, for any $x\in H_{\bT\times\CC^\times}^\ast(pt)$ we have $r_\muG x = \mathsf{u}_\muG(x) r_\muG$, where $\mathsf{u}_\muG$ is an $\hbar$--difference operator.  Also, in $\nilHecke$ we note the basic relation $\idemCB \partial_{w_0} = \partial_{w_0}$.  Putting this all together, we have
$$
\sum_{w, p} \mathsf{u}_\muG(x_w) g_i ( \Inc \ast M_{\laG, f_i} \ast \Av) \partial_{w_0} y_w  = \sum_w \mathsf{u}_\muG(x_w) r_\muG \idemCB \partial_{w_0} y_w = \sum_w r_\muG  x_w \partial_{w_0} y_w  = r_\muG
$$
where the final equality is Lemma \ref{lemma: dual bases}. The left-hand side is of the claimed form, so we are done.
\end{proof}

\section{Generators in the quiver gauge theory case}
\label{section: generators in the quiver gauge theory case}

In this section we focus on the quiver gauge theory case, and apply the results developed in the previous section.  

\subsection{Generation by minuscules}
Let us fix some notation. For each $i\in I$ and $1\leq r \leq \bv_i$, denote by $\eps_{i,r}$ the coweight $(0,\ldots, 1,\ldots, 0)$ of $\GL(V_i)$ (with 1 in its $r$th component).  Following the notation of \cite[Appendix A(ii)]{BFN2}, recall that the minuscule coweights of $\GL(V_i)$ are 
\begin{align*}
\varpi_{i, n} &= \sum_{r=1}^n \eps_{i,r} = (1,\ldots, 1, 0,\ldots, 0), \\
\varpi_{i,n}^\ast & = - \sum_{r=1}^n\eps_{i, n-r+1} = (0,\ldots, 0, -1, \ldots, -1)
\end{align*}
for $1\leq n \leq \bv_i$. The minuscule coweights for the group $\bG = \prod_{i\in I} \GL(V_i)$ correspond to a choice of a minuscule coweight (or zero) of $\GL(V_i)$  at each vertex $i \in I$.

The next result is known to experts \cite[Remark 6.7]{BFN1}, \cite[Section 4.3]{BDG}.  It is proven during the course of the proof of \cite[Theorem 4.29]{FT2}, but for the convenience of the reader we include a proof.

\begin{Proposition}
\label{prop: quiver generated by minuscules}
For any quiver gauge theory $\CB$ is generated by minuscules. 
\end{Proposition}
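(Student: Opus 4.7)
The plan is to exploit the coweight filtration on $\CB$ described in Section~\ref{section: monopole} together with the fact that $\bG = \prod_{i\in I} \GL(V_i)$ is a product of type $A$ groups, for which minuscule coweights (and their duals) span the dominant cone with non-negative integer coefficients. First I would check this elementary combinatorial fact node-by-node: given a dominant $\bG$-coweight $\laG = (\laG_{i, 1} \geq \cdots \geq \laG_{i, \bv_i})_{i\in I}$, write
\[
\laG = \sum_{i\in I} \Big( \sum_{r=1}^{\bv_i - 1} (\laG_{i,r} - \laG_{i, r+1}) \varpi_{i, r} + \max(\laG_{i, \bv_i}, 0) \varpi_{i, \bv_i} + \max(-\laG_{i, \bv_i}, 0) \varpi_{i, \bv_i}^\ast \Big),
\]
which is a $\ZZ_{\geq 0}$-linear combination of minuscule coweights of $\bG$ and their duals.

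Next I would pass to the associated graded $\gr \CB$ for the coweight filtration. By Remark~\ref{remark: monopoles not welldef}, this associated graded has a well-defined basis of dressed monopole classes $\bar M_{\laG, f}$, and the product $\bar M_{\laG_1, f_1} \cdot \bar M_{\laG_2, f_2}$ is explicitly given by a dressed monopole class of coweight $\laG_1 + \laG_2$, where the new dressing is obtained by averaging the product $f_1 f_2$ times an explicit Euler-class factor over a coset of Weyl-group stabilizers (this is essentially \cite[Proposition~6.8]{BFN1}, and is also transparent via the abelianization embedding into a localization of $\ACB$ using \cite[Proposition~A.2]{BFN2}). Iterating, any product of minuscule classes $\bar M_{\varpi_{i_1, n_1}, 1} \cdots \bar M_{\varpi_{i_k, n_k}, 1}$ (and their duals) yields a dressed class $\bar M_{\laG, F}$ for a computable $F$; multiplying further by elements of $H^\ast_{\bG \times \bF \times \CC^\times}(pt) \subset \CB$ and by different such products (corresponding to reorderings of the minuscule decomposition), I would show that one can realize every $\Sigma_\laG$-invariant dressing $f$ as such a combination.

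Finally I would lift from $\gr \CB$ back to $\CB$: given $(\laG, f)$, choose a minuscule decomposition and form the corresponding product $P \in \CB$ of minuscule dressed monopole operators, scaled by an appropriate element of $H^\ast_{\bG\times\bF\times\CC^\times}(pt)$; then $P$ and any chosen representative of $M_{\laG, f}$ agree in $\gr \CB$, so their difference lies in the strictly smaller filtration piece $\CB_{< \laG}$, i.e.\ is a sum of dressed monopoles $M_{\laG', f'}$ with $\laG' < \laG$ in the dominance order. Induction on this partial order finishes the proof.

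The main technical obstacle is the bookkeeping of dressings: the product of minuscule classes $\bar M_{\varpi_{i_k, n_k}, 1}$ produces a specific $\Sigma_\laG$-invariant dressing dictated by the orbit combinatorics of $\Sigma_\laG$ inside $\prod_k \Sigma_{\varpi_{i_k, n_k}}$, and one must argue that varying decompositions and multiplying by $H^\ast_\bG(pt)$-factors suffices to span the entire space of dressings $H^\ast_{\bT \times \bF \times \CC^\times}(pt)^{\Sigma_\laG}$. The cleanest route is to carry out this verification inside $\ACB$ (after inverting enough Euler classes), where the abelian formula of \cite[Proposition~A.2]{BFN2} makes the generation of dressings from products of minuscule abelian classes essentially a consequence of the fact that $H^\ast_\bT(pt)^{\Sigma_\laG}$ is generated over $H^\ast_\bT(pt)^{\Sigma}$ by products of elementary symmetric functions in each block of the minuscule decomposition.
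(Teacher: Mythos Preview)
Your approach is genuinely different from the paper's, and the ``main technical obstacle'' you flag is a real gap, not just bookkeeping.

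The paper does not decompose an arbitrary $\laG$ into a fixed list of minuscules and then chase Euler-class factors. Instead it invokes \cite[Proposition~6.8]{BFN1} as a black-box generation statement: $\CB$ is generated by $H^\ast_{\bG\times\bF\times\CC^\times}(pt)$ together with the $M_{\laG,f}$ for $\laG$ ranging over the semigroup generators of each chamber of the arrangement cut out (inside the dominant Weyl chamber) by the weight hyperplanes of $\bN$. The point of that result is precisely that within a single chamber the Euler-class corrections vanish, so the dressing issue never arises and the conclusion holds over $\CC[\hbar]$. The paper's only remaining work is combinatorial: it \emph{refines} the arrangement by adding the hyperplanes $\eV_{i,r}=0$ and $\eV_{j,s}-\eV_{i,r}=0$ for \emph{all} $i,j,r,s$ (not just $i\sim j$); each refined chamber then looks like a Weyl chamber for $\GL(|\bv|)$, and its semigroup of integral points is visibly generated by minuscule coweights of $\bG$.

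Your node-by-node decomposition $\laG=\sum_i\big(\sum_r(\laG_{i,r}-\laG_{i,r+1})\varpi_{i,r}+\cdots\big)$ does \emph{not} keep the summands in a common chamber of the weight arrangement (e.g.\ $\varpi_{i,1}$ and $\varpi_{j,1}$ for $i\sim j$ lie on opposite sides of $\eV_{i,1}=\eV_{j,1}$), so the products you form acquire genuine Euler-class dressings. Your proposed fix --- verify inside a localization of $\ACB$ --- does not close the gap: spanning $H^\ast_\bT(pt)^{\Sigma_\laG}$ as an $H^\ast_\bG(pt)$-module after inverting polynomials does not imply spanning before localization. Concretely, if every product you can build lands in $(e_1,e_2,\ldots)\cdot M_{\laG,-}$ for various Euler classes $e_k$, you need the $e_k$ to generate the unit ideal over $\CC[\hbar]$; compare the proof of Proposition~\ref{prop: generators for abelian theory}, where exactly this kind of unit-ideal argument is made and only succeeds after specializing $\hbar=1$. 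The chamber-refinement trick in the paper's proof is what lets one avoid this problem altogether.
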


\begin{proof}
We will apply \cite[Proposition 6.8]{BFN1}, whose proof provides a procedure to find a generating set for $\CB$.
Consider the dominant Weyl chamber for $G = \prod_i GL(m_i)$, and inside it the hyperplane arrangement given by the weights of $\bN$.  For each chamber of this arrangement, we find generators for its semigroup of integral points, and for each such generator $\laG$, we consider the monopole operators $M_{\laG, f}$ (more precisely, we must choose some lifts from the associated graded, see Remark \ref{remark: monopoles not welldef}).  Taking all of these elements, together with $H_{\bG\times \bF\times\CC^\times}^\ast(pt)$ we get a generating set for $\CB$. 

Note that if we refine the hyperplane arrangement by adding more hyperplanes, the above procedure applied to the refined arrangement will still produce a set of generators for $\CB$ (possibly with redundancies).  In particular, for a quiver gauge theory observe that we may refine the arrangement coming from $\bN$ to include the hyperplanes $\eV_{j, s} - \eV_{i,r} = 0$ and $\eV_{i,r} = 0$ for {all} $i, j, r, s$ (i.e.~even if $i\not\sim j$).   Denote a point $\eta = (\eta_{i,r}) \in \ft_{\RR}$.  Then each chamber of the refined arrangement corresponds to an ordering (as real numbers)
\begin{equation*}
\eta_{i_1, r_1} > \eta_{i_2, r_2} > \ldots > \eta_{i_p, r_p} > 0 > \eta_{i_{p+1}, r_{p+1} } > \ldots > \eta_{i_{|\bv|}, r_{|\bv|}}
\end{equation*}
where we denote $ |\bv| = \sum_i \bv_i$.  This chamber roughly looks like a Weyl chamber for $\GL(|\bv|)$; it is easy to see that its semigroup of integral points is generated by the set
$$
\left\{\sum_{a=1}^s \eps_{i_a, r_a} : 1\leq s \leq p \right\} \cup \left\{ - \sum_{a=s}^{|\bv|} \eps_{i_a, r_a} : p+1 \leq s \leq |\bv| \right\}
$$
To complete the proof, we simply observe that every element of this set is a minuscule coweight for $\bG$.  

\end{proof}

As an immediate consequence of this proposition and Corollary \ref{lem: generated by minuscules, generators of Iwahori}, we have:
\begin{Corollary}
\label{cor: quiver ICB generated by ACB and nilHecke}
For any quiver gauge theory, $\ICB$ is generated by its subalgebras $\ACB$ and $\nilHecke$.
\end{Corollary}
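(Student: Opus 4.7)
The plan is to deduce this statement immediately by chaining together the two results established just before it. The hypothesis of Corollary \ref{lem: generated by minuscules, generators of Iwahori} is exactly ``$\CB$ is generated by minuscules,'' and its conclusion is exactly the claim about $\ICB$ we want. So the only thing to do is verify that a quiver gauge theory satisfies the hypothesis, and this is precisely the content of Proposition \ref{prop: quiver generated by minuscules}. There is no extra work required, and I would simply cite both results.

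To unpack the reason this works: Proposition \ref{prop: quiver generated by minuscules} uses the generating procedure of \cite[Proposition 6.8]{BFN1}, refined by throwing in the hyperplanes $\eV_{j,s}-\eV_{i,r}=0$ and $\eV_{i,r}=0$ for all indices. This refinement ensures that every chamber looks like a Weyl chamber for $\GL(|\bv|)$, so the integral semigroup generators are partial sums of the coordinate coweights $\eps_{i,r}$, hence minuscule. This delivers a generating set for $\CB$ consisting of $H_{\bG\times\bF\times\CC^\times}^{\ast}(pt)$ together with dressed minuscules $M_{\laG,f}$. Then Corollary \ref{lem: generated by minuscules, generators of Iwahori} promotes each such generator into the subalgebra of $\ICB$ generated by $\nilHecke$ and $\ACB$, using the explicit formula $M_{\laG,f}\mapsto \partial_{w_0 w_\laG}\,f\,r_\laG\cdot \idemCB$ from Lemma \ref{lemma: minuscule monopole reexpressed} and the fact that $\nilHecke$ and $\CB$ already generate $\ICB$ by Theorem \ref{thm: ICB matrix algebra over CB}.

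Because all the real work has been done in Proposition \ref{prop: quiver generated by minuscules} and in Section \ref{sec: CB and ICB}, there is no genuine obstacle here. If pressed to name one, the only subtle point is making sure the hypothesis of Corollary \ref{lem: generated by minuscules, generators of Iwahori} matches the conclusion of Proposition \ref{prop: quiver generated by minuscules} on the nose, which it does: both use the same definition of ``generated by minuscules'' in the sense of Section \ref{sec: generation by minuscules general case}. The proof therefore reduces to a single sentence invoking the two results.
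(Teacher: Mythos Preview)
Your proposal is correct and matches the paper's approach exactly: the paper states this corollary as an immediate consequence of Proposition~\ref{prop: quiver generated by minuscules} and Corollary~\ref{lem: generated by minuscules, generators of Iwahori}, with no additional argument.
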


\begin{Remark}
Proposition \ref{prop: quiver generated by minuscules} and Corollary \ref{cor: quiver ICB generated by ACB and nilHecke} give honest algebra generators, i.e.~no Poisson brackets or division by $\hbar$ are used.  In particular, both results remain true at $\hbar = 0$.
\end{Remark}

\begin{Remark}
Suppose that $\bG = \prod_i \GL(V_i)$, and that $\bN$ is a representation of $\bG$ such that its weights are all multiples of $\eV_{j,s} - \eV_{i,r}$ or $\eV_{i,r}$.  Then it is clear that Proposition \ref{prop: quiver generated by minuscules} and Corollary \ref{cor: quiver ICB generated by ACB and nilHecke} still apply for $(\bG, \bN)$.  But as far as the author is aware, the only $\bN$ with this property come from quivers (allowing loops and multiple edges).
\end{Remark}

\subsection{Generators for \texorpdfstring{$\ACBone$}{Aab at h=1}}

Let $\laG = (\laG_{i,r})$ be any coweight for $\bG$.  If $\laG_{i,r}\geq 0$ for some $i,r$, then applying \cite[Section 4(iii)]{BFN1} we find that in $\ACB$ we have
\begin{equation}
\label{eq: product formula}
r_{\eps_{i,r}} r_\laG =   \Bigg(\prod_{i\rightarrow j} \prod_{\substack{1\leq s \leq \bv_j \\ \laG_{i,r} < \laG_{j, s}} }(\eV_{j,s } - \eV_{i,r}- \tfrac{1}{2} \hbar) \Bigg) \Bigg(\prod_{i\leftarrow j} \prod_{\substack{1\leq s \leq \bv_j \\ \laG_{i,r} < \laG_{j, s}} }(\eV_{i,r} - \eV_{j,s} + \tfrac{1}{2} \hbar ) \Bigg) r_{\eps_{i,r}+ \laG} 
\end{equation}
There is a similar expression for the product $r_{-\eps_{i,r}} r_\laG $ when $\laG_{i,r}\leq 0$.  Notice that if there are no pairs $(j,s)$ such that $\laG_{i,r} < \laG_{j,s}$, then the above product is simply $r_{\eps_{i,r} + \laG}$.  The proof below is in some sense a generalization of this observation.

\begin{Proposition}
\label{prop: generators for abelian theory}
$\ACBone$ is generated by $H_{\bT\times \bF}^\ast(pt)$ and the elements $r_{\pm \eps_{i,r}}$.
\end{Proposition}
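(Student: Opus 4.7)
The plan is to show that every basis element $r_\laG$, for $\laG \in \ZZ^\bv$, lies in the subalgebra $A' \subseteq \ACBone$ generated by $H_{\bT\times\bF}^\ast(pt)$ and $\{r_{\pm\eps_{i,r}}\}$. Since $\{r_\laG\}_{\laG \in \ZZ^\bv}$ is an $H_{\bT\times\bF}^\ast(pt)$--basis of $\ACBone$, this suffices. I would argue by induction on $|\laG| := \sum_{i,r}|\laG_{i,r}|$, with the bases $\laG = 0$ and $\laG = \pm\eps_{i,r}$ holding by the definition of $A'$.

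For the inductive step with $|\laG| \geq 2$, fix a coordinate $(k,t)$ with $\laG_{k,t} \neq 0$, let $\sigma = \operatorname{sign}(\laG_{k,t})$, and set $\laG' = \laG - \sigma\eps_{k,t}$; then $|\laG'| < |\laG|$, so $r_{\laG'} \in A'$ by induction. The product formula (\ref{eq: product formula}) (and its negative-$\sigma$ analogue) yields
\[
r_{\sigma\eps_{k,t}} \cdot r_{\laG'} = P \cdot r_\laG
\]
for an explicit polynomial $P \in H_{\bT\times\bF}^\ast(pt)$, placing $P r_\laG \in A'$. If $P$ happens to be a unit we conclude immediately; this is the situation highlighted in the paragraph preceding the proposition, namely when $(k,t)$ can be chosen so that $\laG'_{j,s} \leq \laG'_{k,t}$ for all neighbours $j \sim k$ and all $s$.

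In general no such maximizing coordinate exists (already for $\laG = \eps_{i,1} + \eps_{j,1}$ with $i \sim j$), and one must instead cancel the polynomial factor $P$ by combining several products. The key observation is that at $\hbar = 1$, computing the product in the opposite order $r_{\laG'} \cdot r_{\sigma\eps_{k,t}} = P' \cdot r_\laG$ yields a polynomial $P'$ whose linear factors arise from those of $P$ by flipping each half-integer shift $\pm\tfrac{1}{2} \leftrightarrow \mp\tfrac{1}{2}$. Hence $P - P'$ has strictly smaller degree than $P$, and the commutator $[r_{\sigma\eps_{k,t}}, r_{\laG'}] = (P - P')\, r_\laG$ furnishes a simpler polynomial multiple of $r_\laG$ inside $A'$. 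Iterating this degree-reduction --- varying $(k,t)$ and taking advantage of intermediate factorizations $\laG = \muG_1 + \muG_2$ with $r_{\muG_1}, r_{\muG_2} \in A'$ by induction --- produces a family $\{Q_\ell r_\laG\}_\ell \subset A'$ of polynomial multiples of $r_\laG$. The induction then concludes by exhibiting a unit in the ideal $\langle Q_\ell : \ell \rangle \subset H_{\bT\times\bF}^\ast(pt)$.

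The hardest part will be this final step: verifying that the polynomials $\{Q_\ell\}$ obtained through iterated commutators and decompositions really do generate the unit ideal. This is also where the specialization $\hbar = 1$ is essential: at $\hbar = 0$ the half-integer shifts collapse, $P = P'$, and every commutator vanishes, killing the degree-reduction mechanism entirely. A secondary induction on the degree of the smallest $Q_\ell$ constructed so far, together with some case analysis when many values $\laG_{j,s}$ at neighbouring vertices coincide, should be enough to close the argument.
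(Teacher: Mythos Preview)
Your induction on $|\laG|$ and your recognition that the crux is to produce a family $\{Q_\ell r_\laG\}\subset A'$ with $1 \in \langle Q_\ell\rangle$ are both correct and match the paper's proof. The gap is in how you propose to manufacture this family. Iterating commutators to reduce degrees is a plausible idea, and your observation that $P$ and $P'$ share the same top-degree part (so that $\deg(P-P') < \deg P$) is essentially right. But you do not actually carry out the promised secondary induction or case analysis, and it is not clear this terminates cleanly: after one commutator you have $(P-P')\,r_\laG \in A'$, but to iterate you must refactor $\laG$ differently, and controlling the resulting collection of polynomials so that they eventually generate the unit ideal is a separate argument you have not supplied. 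As written this is a genuine gap, not just a missing detail.

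The paper sidesteps commutators entirely by choosing the family of products well at the outset. Assume some $\laG_{i,r} > 0$ (the negative case is symmetric), and let $S$ be the set of indices $(i,r)$ at which $\laG_{i,r}$ attains its \emph{maximum}. For each $(i,r)\in S$ the product formula gives
\[
r_{\eps_{i,r}}\, r_{\laG-\eps_{i,r}} \;=\; \pm\Big(\prod_{\substack{j\sim i,\ s \\ \laG_{j,s}\geq\laG_{i,r}}} (\eV_{i,r} - \eV_{j,s} + \tfrac12)\Big)\, r_\laG,
\]
and maximality of $\laG_{i,r}$ forces every $(j,s)$ in the product to also lie in $S$. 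Thus all of the polynomials $P_{(i,r)}$ live in the finite subring $\CC[\eV_{k,t}:(k,t)\in S]$, and a one-line Nullstellensatz argument shows they have no common zero: if $x$ were one, then for each $(i,r)\in S$ some factor vanishes, forcing $x_{i,r} = x_{j,s} - \tfrac12$ for some $(j,s)\in S$, which contradicts the existence of an $x_{i,r}$ of maximal real part. Hence $1 \in \langle P_{(i,r)} : (i,r)\in S\rangle$, so $r_\laG$ is an $H_\bT^\ast(pt)$-linear combination of the products $r_{\eps_{i,r}} r_{\laG-\eps_{i,r}}$, each in $A'$ by induction. This is also precisely where $\hbar=1$ enters: at $\hbar=0$ the shift $+\tfrac12$ disappears and the real-part argument collapses, matching your intuition about why the specialization is essential.
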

\begin{proof}
It is sufficient to prove that any $r_\laG$ can be expressed in terms of these elements.  We proceed by induction on $|\laG| := \sum_{i,r} |\laG_{i,r}|$.  When $|\laG| = 0$ the statement is obviously true, so take $\laG$ some arbitary coweight with $|\laG| >0$.  

Suppose that there exists some $\laG_{i,r} > 0$.  Then we will express $r_\laG$ as a linear combination of the products $r_{\eps_{i,r}} r_{\laG - \eps_{i,r}} $ where $\laG_{i,r} > 0$,  thought of as elements of $\ACBone$ viewed as a left module over $H_{\bT}^\ast(pt)$.  For $\laG_{i,r}>0$, by equation (\ref{eq: product formula}) we have
\begin{equation} 
\label{eq: product formula redux}
r_{\eps_{i,r}} r_{\laG - \eps_{i,r}} = \pm \Bigg(\prod_{i \sim j} \prod_{\substack{1\leq s\leq \bv_j \\ \laG_{i,r} \leq \laG_{j, s}}} (\eV_{i,r} - \eV_{j,s} + \tfrac{1}{2})   \Bigg) r_\laG 
\end{equation}
Consider the set $S$ of all pairs $(i, r)$ such that $\laG_{i,r}$ is maximal.  If $(i,r)\in S$, then the polynomial on the right-hand side of (\ref{eq: product formula redux}) contains only factors where $(j,s)\in S$.  Taken together over all $(i,r)\in S$, these polynomials therefore define an ideal $\mathcal{I}$ inside the subring
\begin{equation*}
\CC[ \eV_{i,r} : (i,r) \in S] \ \subseteq \ H_{\bT}^\ast(pt),
\end{equation*}

We claim that the vanishing locus  $\mathbb{V}(\mathcal{I})$ has no $\CC$--points.  Assuming this claim for the moment, it follows that $1 \in \mathcal{I}$.  Using (\ref{eq: product formula redux}), we can therefore express $r_\laG$ as a (left) linear combination over $\CC[ \eV_{i,r} : (i,r) \in S]$ of the elements $r_{\eps_{i,r}} r_{\laG - \eps_{i,r}}$, as desired.  Now, to prove that $\mathbb{V}(\mathcal{I})$ has no points, let us assume that $x = (x_{i,r}) \in \mathbb{V}(\mathcal{I})$ is a $\CC$--point.  For every $(i, r)\in S$, since the polynomial on the right-hand side of (\ref{eq: product formula redux}) must vanish, we must have
\begin{equation}
\label{eq: element condition}
x_{i,r} \in \left\{ x_{j,s} - \tfrac{1}{2} : (j,s) \in S, \ i \sim j, 1 \leq s \leq \bv_j\right\} 
\end{equation}
As $x$ is a finite list of complex numbers some $x_{i,r}$ has maximal real part, so (\ref{eq: element condition}) is impossible.  Hence $\mathbb{V}(\mathcal{I})$ is empty.

A similar argument applies to the case when there exists $\laG_{i,r}<0$. Putting these two cases together, we see that any $r_\laG$ is expressible over $H_T^\ast(pt)$ as a linear combination of summands $r_{\pm \eps_{i,r}} r_{\laG \mp \eps_{i,r}}$.  As $|\laG\mp\eps_{i,k}| < |\laG|$, by induction on $|\laG|$ the result follows.
\end{proof}

Because of Corollary \ref{cor: quiver ICB generated by ACB and nilHecke}, we can conclude that:
\begin{Corollary}
\label{cor: gens of Iwahori}
As an algebra over $H_{\bF}^\ast(pt)$, $\ICBone$ is generated by $\nilHecke$ and the elements $r_{\pm \eps_{i, r}}$.
\end{Corollary}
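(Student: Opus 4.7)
The plan is to directly combine Corollary \ref{cor: quiver ICB generated by ACB and nilHecke} with Proposition \ref{prop: generators for abelian theory}, since the statement is essentially their synthesis after $\hbar = 1$. First I would specialize Corollary \ref{cor: quiver ICB generated by ACB and nilHecke} to $\hbar=1$, which is valid by the Remark immediately following it (the algebra generators are honest generators, so base change is fine). This shows that $\ICBone$ is generated by the images of $\ACBone$ and $\nilHecke$.

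Next I would invoke Proposition \ref{prop: generators for abelian theory}, which tells us that $\ACBone$ is generated as an algebra by $H_{\bT \times \bF}^\ast(pt)$ together with the elements $r_{\pm \eps_{i,r}}$. Therefore $\ICBone$ is generated over $H_\bF^\ast(pt)$ by $\nilHecke$, $H_{\bT}^\ast(pt)$, and the $r_{\pm\eps_{i,r}}$.

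Finally I would observe that $H_{\bT}^\ast(pt)$ is already a subalgebra of $\nilHecke$ (acting on itself by multiplication, as recalled in Section \ref{sec: nilhecke algebra}). This removes the $H_\bT^\ast(pt)$ factor from the generating set and yields the desired conclusion: $\ICBone$ is generated over $H_\bF^\ast(pt)$ by $\nilHecke$ together with the $r_{\pm \eps_{i,r}}$. No obstacle is expected, as this is a purely formal consequence of the preceding two results; the only point worth checking is that the inclusions $\nilHecke \hookrightarrow \ICB$ and $\ACB \hookrightarrow \ICB$ from the previous sections remain well-behaved upon setting $\hbar = 1$, which is immediate from their construction as subalgebras of a $\CC[\hbar]$-algebra that is $\CC[\hbar]$-flat in the relevant sense.
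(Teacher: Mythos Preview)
Your proposal is correct and follows essentially the same approach as the paper, which simply notes that the corollary follows from Corollary~\ref{cor: quiver ICB generated by ACB and nilHecke} (combined implicitly with the immediately preceding Proposition~\ref{prop: generators for abelian theory}). You have merely spelled out the steps more explicitly, including the observation that $H_\bT^\ast(pt)\subset\nilHecke$ absorbs the polynomial part.
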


\subsection{Generators for \texorpdfstring{$\CBone$}{Asph at h=1}} 

We now come to a main result of this paper:

\begin{Theorem}
\label{thm: Yangian surjectivity}
$\CBone$ is generated by $H_{\bG\times\bF}^\ast(pt)$ and the dressed minuscule monopole operators $M_{\varpi_{i,1}, f}$, $M_{\varpi_{i,1}^\ast, f}$.
\end{Theorem}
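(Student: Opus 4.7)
The plan is to leverage the isomorphism $\CB \cong \idemCB \ICB \idemCB$ from Theorem \ref{thm: ICB matrix algebra over CB} together with the generation statements for $\ICBone$ already in hand. By Corollary \ref{cor: gens of Iwahori}, $\ICBone$ is generated over $H_\bF^\ast(pt)$ by $\nilHecke$ and the abelian classes $r_{\pm\eps_{i,r}}$. Since $\eps_{i,r}$ lies in the $\Sigma_{\bv_i}$-orbit of $\varpi_{i,1} = \eps_{i,1}$, and $-\eps_{i,r}$ in the orbit of $\varpi_{i,1}^\ast = -\eps_{i,\bv_i}$, Proposition \ref{prop: abelian as monopole} expresses each $r_{\pm\eps_{i,r}}$ inside $\ICBone$ as a polynomial in $\nilHecke$, $H_\bT^\ast(pt)$, and the lifted monopole classes $\widetilde{M}^{\pm}_{i,f} := \Inc \ast M_{\varpi_{i,1}^{\pm},f} \ast \Av$.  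Consequently $\ICBone$ is generated over $H_\bF^\ast(pt)$ by $\nilHecke$ together with the $\widetilde{M}^{\pm}_{i,f}$.

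To transfer this to $\CBone$ I need to track how the sandwich operation $a \mapsto \idemCB a \idemCB$ acts on products of these generators.  The elements $\widetilde{M}^{\pm}_{i,f}$ lie in $\idemCB \ICB \idemCB$ by construction, and hence satisfy $\idemCB \widetilde{M}^{\pm}_{i,f} = \widetilde{M}^{\pm}_{i,f} = \widetilde{M}^{\pm}_{i,f} \idemCB$; the subring $H_\bF^\ast(pt)$ is central in $\ICB$ and also commutes with $\idemCB$. The nilHecke factor does not have this property, but $\idemCB$ is the primitive idempotent in $\nilHecke \cong \End_{H_\bG^\ast(pt)}(H_\bT^\ast(pt))$ corresponding to the rank-one summand $H_\bG^\ast(pt) \hookrightarrow H_\bT^\ast(pt)$, so $\idemCB \nilHecke \idemCB = H_\bG^\ast(pt) \idemCB$; under the isomorphism this sandwich is exactly the image of $H_\bG^\ast(pt) \subset \CBone$ (via cap product against $\idemCB = \Inc \ast \Av$).

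For an arbitrary word $g_1 N_1 g_2 N_2 \cdots N_{k-1} g_k$ in $\ICBone$, with $g_j \in \{\widetilde{M}^{\pm}_{i,f}\} \cup H_\bF^\ast(pt)$ and $N_j \in \nilHecke$, I insert $\idemCB^2 = \idemCB$ on either side of each $N_j$, using $g_j \idemCB = g_j = \idemCB g_j$, to obtain
\[
\idemCB g_1 N_1 g_2 N_2 \cdots N_{k-1} g_k \idemCB \;=\; g_1 (\idemCB N_1 \idemCB) g_2 (\idemCB N_2 \idemCB) \cdots (\idemCB N_{k-1} \idemCB) g_k,
\]
a product of elements which correspond, under $\CB \cong \idemCB \ICB \idemCB$, to the classes $M_{\varpi_{i,1}^{\pm},f}$, $H_\bF^\ast(pt)$, and $H_\bG^\ast(pt)$ sitting inside $\CBone$.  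This proves the theorem.  The only real point to be careful about is the matrix-algebra structure of $\nilHecke$ used to identify $\idemCB \nilHecke \idemCB$ with $H_\bG^\ast(pt) \idemCB$; everything else is essentially bookkeeping, and the true content of the argument is concentrated in Corollary \ref{cor: gens of Iwahori} and Proposition \ref{prop: abelian as monopole}.
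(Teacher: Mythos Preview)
Your proof is correct and follows essentially the same route as the paper's: both use Corollary~\ref{cor: gens of Iwahori} and Proposition~\ref{prop: abelian as monopole} to see that $\ICBone$ is generated by $\nilHecke$ together with the lifted monopoles $\Inc \ast M_{\varpi_{i,1}^\pm,f} \ast \Av$, and then descend to $\CBone \cong \idemCB\ICBone\idemCB$ via the matrix-algebra structure. The paper packages the last step as ``it is enough to show $\cS=\ICBone$'' (relying implicitly on Theorem~\ref{thm: ICB matrix algebra over CB}), whereas you spell out the idempotent manipulation $\idemCB(g_1 N_1\cdots g_k)\idemCB = g_1(\idemCB N_1\idemCB)\cdots g_k$ explicitly; this is the same argument, just unpacked.
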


\begin{proof}
Consider the subalgebra $\cS^{\textsf{sph}}\subset \CBone$ generated by these elements, and also the corresponding subalgebra $\cS\subset \ICBone$ generated by $\cS^{\textsf{sph}}$ and $\nilHecke$.  It is enough to prove that $\cS = \ICBone$.  Because of Corollary \ref{cor: gens of Iwahori}, it further suffices to show that $r_{\pm \eps_{i,r}} \in \cS$ for all $i,r$.  Since $\eps_{i, r} \in \Sigma \varpi_{i,1}$ and $- \eps_{i,r} \in \Sigma \varpi_{i,1}^\ast$, this follows from Proposition \ref{prop: abelian as monopole}.
\end{proof}
	
We note that all of our $\hbar = 1$ results above are also true over $\CC[\hbar, \hbar^{-1}]$, with slightly modified proofs. In general they will fail over $\CC[\hbar]$, where one must give more generators.  However: 

\begin{Conjecture}
\label{conj: poisson gens}
Consider the Poisson bracket $\{a, b\} = \tfrac{1}{\hbar}[a,b]$ on $\CB$.  Then $\CB$ is Poisson generated by $H_{\bG\times\bF\times \CC^\times}^\ast(pt)$ and the dressed minuscule monopole operators $M_{\varpi_{i,1}, f}$, $M_{\varpi_{i,1}^\ast, f}$
\end{Conjecture}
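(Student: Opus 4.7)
The plan is to upgrade Theorem \ref{thm: Yangian surjectivity} from the $\hbar=1$ specialization to a Poisson generation statement over $\CC[\hbar]$, by showing that iterated Poisson brackets of minuscule monopoles compensate for the powers of $\hbar^{-1}$ that inevitably appear when one tries to write higher monopoles as polynomial expressions in them. Let $\mathsf{P} \subseteq \CB$ denote the Poisson subalgebra generated by $H_{\bG\times\bF\times\CC^\times}^\ast(pt)$ together with all $M_{\varpi_{i,1}, f}$ and $M_{\varpi_{i,1}^\ast, f}$. Since $\CB$ is filtered by the BFN monopole filtration, with associated graded spanned over the polynomial ring by lifts of dressed monopoles $M_{\laG, f}$ for $\laG$ dominant, it suffices to show that every such $M_{\laG, f}$ lies in $\mathsf{P}$ modulo strictly lower filtration degree.

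I would proceed by induction on $\laG$ in the dominance order. The base case $\laG = \varpi_{i,1}$ or $\varpi_{i,1}^\ast$ is trivial. For the inductive step, the guiding picture comes from the shifted Yangian: higher-level generators of $\mathbf{Y}_\mu^\lambda$ are obtained from level-one generators by iterated commutators, and each commutator drops exactly one power of $\hbar$ in the Rees algebra, so in $\mathbf{Y}_\mu^\lambda$ they are literally iterated Poisson brackets of level-one generators with polynomial coefficients. In finite ADE type, Theorem \ref{thm A}(b) then translates this structure directly into the desired Poisson expressions in $\CB$, settling the conjecture. Outside of finite ADE type there is no shifted Yangian available, and one must replicate the inductive step intrinsically in $\CB$: compute the commutator $[M_{\varpi_{i,1}, f}, M_{\varpi_{j,1}, g}]$ using the convolution diagram of \cite{BFN1}, verify that it is divisible by $\hbar$, and identify the leading term in the monopole filtration as a dressed monopole with coweight $\varpi_{i,1}+\varpi_{j,1}$ and dressing built from $f$ and $g$.

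The main obstacle will be establishing the $\hbar$-divisibility and extracting the leading monopole terms without appealing to a Rees-algebra crutch. One concrete route is to work through the embedding $\CB \hookrightarrow \sD$ of \cite[Appendix A]{BFN2}, where the minuscule generators act as explicit $\hbar$-difference operators, so that commutators can be expanded in $\hbar$ by direct calculation and the leading monopole term read off using Proposition \ref{prop: abelian as monopole} and Corollary \ref{cor: gens of Iwahori}. A more structural route is to first prove the analogous Poisson-generation statement for $\ICB$---that $\ICB$ is Poisson generated by $\nilHecke$, the polynomial ring, and the $r_{\pm \eps_{i,r}}$---and then descend to $\CB$ via the matrix algebra isomorphism of Theorem \ref{thm: ICB matrix algebra over CB}, exploiting the explicit product formula \eqref{eq: product formula} to control the $\hbar$-divisibility of $[r_{\pm \eps_{i,r}}, r_\laG]$ on the abelianized side, where the combinatorics is considerably more transparent.
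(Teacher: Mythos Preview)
Note first that the statement is labeled a \emph{Conjecture} in the paper, not a theorem. The paper does not prove it in general: it only establishes the finite ADE case, and does so exactly as you suggest---as an immediate consequence of Theorem~\ref{thm: finite type surjectivity} (Theorem~A(b)) together with the fact that $\bY_\mu[z_1,\ldots,z_N]$ is Poisson generated by $A_i^{(r)}, E_i^{(1)}, F_i^{(1)}$. So for finite ADE type your argument and the paper's coincide.

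Outside of finite ADE type the paper offers no proof; the conjecture remains open there, so your sketch for the general case is not comparable to anything in the paper. What you have written for that case is a reasonable research outline rather than a proof: the ``main obstacle'' you identify---establishing $\hbar$-divisibility of the relevant commutators and extracting the correct leading monopole term without already having a graded surjection from a Rees algebra---is precisely why this is stated as a conjecture. One concrete warning about the abelianized route you propose: Proposition~\ref{prop: generators for abelian theory} (and hence Corollary~\ref{cor: gens of Iwahori}) genuinely requires $\hbar$ invertible. Its proof hinges on the emptiness of a variety cut out by polynomials of the form $\eV_{i,r}-\eV_{j,s}+\tfrac{1}{2}\hbar$, and at $\hbar=0$ this variety is obviously nonempty, so the resulting expression of $r_\laG$ in terms of $r_{\pm\eps_{i,r}}$ has no reason to involve only nonnegative powers of $\hbar$. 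Controlling $[r_{\pm\eps_{i,r}}, r_\laG]$ via \eqref{eq: product formula} does not by itself repair this, since you still need to produce $r_\laG$ itself, not just its commutators, inside the Poisson subalgebra. (A small side remark: the shifted Yangian $Y_\mu[z_1,\ldots,z_N]$ \emph{is} defined in the paper for arbitrary simply-laced Kac-Moody type; what is missing outside finite type is the PBW filtration and Rees algebra $\bY_\mu$, which is what your finite-type argument actually uses.)
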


\begin{Remark}
\label{rmk: only need f=1}
In above the theorem and conjecture, it suffices to take dressing $f=1$.  Indeed, as we discuss below, the above elements satisfy relations coming from the Yangian. In particular one can write down elements $S_i^{(r)} \in H_{\bG}^\ast(pt) \subset \CB$, as in \cite[Remark 3.10]{FKPRW}. These elements $S_i^{(r)}$ can be used to generate all $M_{\varpi_{i,1}, f}$ from $M_{\varpi_{i,1}, 1}$ under Poisson bracket (and similarly for $M_{\varpi_{i,1}^\ast, f}$).
\end{Remark}

\subsection{Relationship to (truncated) shifted Yangians}
\label{section: relationship to shifted Yangians}

Suppose that our quiver $Q$ is an orientation of a Dynkin diagram of finite ADE type, with corresponding simple Lie algebra $\fg_Q$.  Given $I$--graded vector spaces $W, V$, we can define a pair of coweights $\lambda, \mu$ for $\fg_Q$, as in \cite[Section 3(iii)]{BFN2}. By \cite[Theorem B.18]{BFN2} specialized at $\hbar=1$, Theorem \ref{thm: Yangian surjectivity} translates directly into the statement that there is a surjection 
\begin{equation}
\label{eq: Yangian surjection}
Y_\mu[z_1,\ldots,z_N] \twoheadlongrightarrow \CBone
\end{equation}
Recall from Section \ref{notation: cohomology} that we identify $H_\bF^\ast(pt) = \CC[z_1,\ldots,z_N]$.  Here, $Y_\mu[z_1,\ldots,z_N]$ is a {\bf shifted Yangian for} $\fg_Q$.
We will not recall its precise definition, and refer the reader to \cite[Definition B.2]{BFN2} or \cite[Definition 4.1]{KTWWY2}. For us, it suffices to know that its generators map as follows:
\begin{align}
A_i^{(r)} & \mapsto (-1)^r e_r(\eV_{i,1},\ldots,\eV_{i,\bv_i}) \label{eq: Y to CB 1} \\
E_i^{(1)} & \mapsto (-1)^{\bv_i} M_{\varpi_{i,1}^\ast, 1} \label{eq: Y to CB 2} \\
F_i^{(1)} & \mapsto (-1)^{\sum_{i\rightarrow j} \bv_j} M_{\varpi_{i,1}, 1} \label{eq: Y to CB 3}
\end{align}
where $e_r$ denotes the $r$th elementary symmetric polynomial.  Note that (\ref{eq: Y to CB 1}--\ref{eq: Y to CB 3}) uniquely determines the images of the higher degree generators $E_i^{(r)}, F_i^{(r)}$, as in Remark \ref{rmk: only need f=1}. 

As in \cite[Definition 4.1]{KTWWY2}, one can appropriately define $Y_\mu[z_1,\ldots,z_N]$ in arbitrary simply-laced Kac-Moody type.  To be more precise, in this paper we follow \cite[Definition 4.1]{KTWWY2} except:
\begin{enumerate}
\item[(a)] We take the roots of $p_i(u)$ be the generators of the polynomial ring $H_{\bF_i}^\ast(pt)$, so $ p_i(u) = \prod_{k: i_k = i} (u-z_{k}) $.
\item[(b)] We specialize ``$\hbar =1$'' instead of ``$\hbar=2$'', meaning we divide the right-hand sides of all relations by two, and let
$$
H_i(u) = p_i(u) \frac{\prod_{j\sim i} (u-\tfrac{1}{2})^{\bv_j} }{u^{\bv_i} (u-1)^{\bv_i}} \frac{\prod_{j\sim i} A_j(u-\tfrac{1}{2}) }{A_i(u) A_i(u-1)}
$$  
\end{enumerate}
There is then a map (\ref{eq: Yangian surjection}) for any simple quiver $Q$, defined by the same formulas (\ref{eq: Y to CB 1}--\ref{eq: Y to CB 3}).  The fact that the relations for the shifted Yangian of $\fg_Q$ are satisfied is ultimately a rank 2 calculation, as in \cite[Appendix B]{BFN2}. The following is now an immediate consquence of Theorem \ref{thm: Yangian surjectivity}:

\begin{Corollary}
\label{cor: h=1 version}
For any simple quiver $Q$, there is a surjective homomorphism of $\CC[z_1,\ldots,z_N]$--algebras
$$
Y_\mu[z_1,\ldots,z_N] \twoheadlongrightarrow \CBone,
$$
defined by (\ref{eq: Y to CB 1}--\ref{eq: Y to CB 3})
\end{Corollary}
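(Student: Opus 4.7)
The plan is to verify surjectivity of a map that has already been defined. In the paragraph immediately preceding the corollary, the existence of the homomorphism $Y_\mu[z_1,\ldots,z_N] \to \CBone$ for a general simple quiver $Q$ was reduced to checking that the defining relations of $Y_\mu[z_1,\ldots,z_N]$ hold between the proposed images of the generators (formulas (\ref{eq: Y to CB 1}--\ref{eq: Y to CB 3}) together with their consequences for higher generators). Since these relations are ``local''---the shifted Yangian presentation involves only pairs of nodes $(i,j)$ at a time---their verification ultimately reduces to rank-$2$ computations in types $A_1\times A_1$, $A_2$, which are handled by \cite[Appendix B]{BFN2}. So I take the existence of the map as given and focus on why its image is all of $\CBone$.

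By Theorem \ref{thm: Yangian surjectivity}, $\CBone$ is generated as an algebra by $H_{\bG\times\bF}^\ast(pt)$ together with the dressed minuscule monopole operators $M_{\varpi_{i,1}, f}$ and $M_{\varpi_{i,1}^\ast, f}$. So it suffices to show each of these classes of generators lies in the image. First, $H_\bF^\ast(pt) = \CC[z_1,\ldots,z_N]$ lies in the image tautologically, since the map is one of $\CC[z_1,\ldots,z_N]$-algebras. Second, $H_\bG^\ast(pt) = \bigotimes_{i\in I} \CC[\eV_{i,1},\ldots,\eV_{i,\bv_i}]^{\Sigma_{\bv_i}}$ is generated by the elementary symmetric polynomials $e_r(\eV_{i,1},\ldots,\eV_{i,\bv_i})$, which by (\ref{eq: Y to CB 1}) are (up to sign) the images of the generators $A_i^{(r)}$. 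Together these give all of $H_{\bG\times\bF}^\ast(pt)$.

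Finally, the undressed monopole operators $M_{\varpi_{i,1},1}$ and $M_{\varpi_{i,1}^\ast,1}$ are directly the images of $F_i^{(1)}$ and $E_i^{(1)}$ (up to sign) by (\ref{eq: Y to CB 2}--\ref{eq: Y to CB 3}). For arbitrary dressings $f$, one invokes Remark \ref{rmk: only need f=1}: the monopoles $M_{\varpi_{i,1},f}$ and $M_{\varpi_{i,1}^\ast,f}$ can be produced from $M_{\varpi_{i,1},1}, M_{\varpi_{i,1}^\ast,1}$ together with elements $S_i^{(r)}\in H_\bG^\ast(pt)$ using iterated commutators (which at $\hbar=1$ play the role of Poisson brackets, since $\{a,b\} = \tfrac{1}{\hbar}[a,b]$ degenerates to $[a,b]$). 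Since all the ingredients lie in the image and the image is closed under commutators, we conclude that every $M_{\varpi_{i,1},f}$ and $M_{\varpi_{i,1}^\ast,f}$ lies in the image, finishing the proof.

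The only real obstacle is the verification of the shifted Yangian relations in the non-finite-type Kac-Moody case, but this is exactly what the preamble to the corollary arranges by reduction to rank $2$; once the map is well-defined, surjectivity is a clean consequence of Theorem \ref{thm: Yangian surjectivity} combined with Remark \ref{rmk: only need f=1}.
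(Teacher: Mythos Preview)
Your proof is correct and follows essentially the same approach as the paper, which simply states that the corollary is ``an immediate consequence of Theorem~\ref{thm: Yangian surjectivity}'' once the map is known to exist. You have correctly unpacked why: the generators from Theorem~\ref{thm: Yangian surjectivity} all lie in the image, with Remark~\ref{rmk: only need f=1} handling arbitrary dressings via commutators (which at $\hbar=1$ coincide with the Poisson bracket).
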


\begin{Remark}
As explained in \cite[Remark 4.4]{KTWWY2}, the algebra $Y_\mu[z_1,\ldots,z_N]$ is not quite built out of the definition of the Yangian (in Kac-Moody type) from \cite[Section 2]{GNW}.  Rather, it is defined using the non-standard Cartan generators $A_i^{(r)}$, which better fit into the Coulomb branch picture (because of equation (\ref{eq: Y to CB 1})).
\end{Remark}

\begin{Remark}
\label{remark: Zastava}
As in \cite[Remark 3.15]{BFN2}, we can define $\cR^{\mathsf{sph}}_+ \subset \cR^{\mathsf{sph}}$ as the preimage of the positive part $\Gr^+ \subset \Gr$ of the affine Grassmannian.  We may also consider the Borel Yangian $Y_\mu^{\leq}[z_1,\ldots,z_N] \subset Y_\mu[z_1,\ldots, z_N]$, which is the subalgebra generated by all $F_i^{(r)}$, $A_i^{(r)}$ and $z_k$.  Tracing through the proof, we can see that the above surjection restricts to a surjection on subalgebras
$$
Y_\mu^{\leq}[z_1,\ldots,z_N] \twoheadlongrightarrow H_\ast^{(\GO\times \bF)\rtimes \CC^\times} (\cR^{\mathsf{sph}}_+)_{\hbar =1}
$$
and similarly for the opposite Borel Yangian and $\cR^{\mathsf{sph}}_{-}$.  The right-hand side  quantizes the Zastava space $Z^{\lambda - \mu}$, at least in finite ADE type (but see \cite[Section 3(vi)]{BFN2}). The above relation to Borel Yangians generalizes \cite{FR1}, \cite{FR2}.
\end{Remark}

Corollary \ref{cor: h=1 version} may be rephrased as an isomorphism $Y_\mu^\lambda \stackrel{\sim}{\rightarrow} \CBone$, where $Y_\mu^\lambda$ is the {\bf truncated shifted Yangian} defined in \cite[Section 4.2]{KTWWY2} and \cite[Appendix B(viii)]{BFN2}. $Y_\mu^\lambda$ can be defined as the image of $Y_\mu[z_1,\ldots,z_N] \rightarrow \CBone$.   (As for the shifted Yangian, in this paper we think of $Y_\mu^\lambda$ as an algebra over $\CC[z_1,\ldots,z_N]$.)

\subsubsection{Flag Yangians}
Following \cite[Section 4.3]{KTWWY2}, we can also define the {\bf flag Yangian} $FY_\mu^\lambda$, which is a matrix algebra over $Y_\mu^\lambda$ built using $\nilHecke$.  Using Theorem \ref{thm: ICB matrix algebra over CB}, it is easy to see that the above isomorphism extends  to an isomorphism $FY_\mu^\lambda \stackrel{\sim}{\rightarrow} \ICBone$, which is compatible with the isomorphism of their spherical subalgebras $Y_\mu^\lambda$ and $\CBone$. Put differently, the embedding $Y_\mu^\lambda \hookrightarrow FY_\mu^\lambda$ is defined in terms of their faithful actions on the polynomial rings 
$$
P^\Sigma = H_{\bG \times \bF}^\ast(pt) \subset P = H_{\bT\times \bF}^\ast(pt)
$$
Elements of $Y_\mu^\lambda$ act on $P^\Sigma$, and we extend this to an action on $P$ by projection/inclusion from $P^\Sigma$.  This agrees with $\CBone \hookrightarrow \ICBone$:  the element $\Av\in \ICBone$ corresponds to projection onto $\Sigma$--invariants, while $\Inc\in \ICBone$ corresponds to their inclusion.

\subsubsection{Filtrations in finite type}

In finite ADE type the algebra $Y_\mu$ has a family of filtrations $F_{\nu_1,\nu_2} Y_\mu$, parametrized by coweights satisfying $\nu_1+\nu_2 = \mu$.  These filtrations are defined explicitly in terms of PBW generators  \cite[Section 5.4]{FKPRW}, and extend to filtrations on $Y_\mu[z_1,\ldots,z_N]$ by declaring all $z_i$ to have degree 1.  The Rees algebras $\mathbf{Y}_\mu[z_1,\ldots,z_N] := \operatorname{Rees}^{F_{\nu_1,\nu_2}} Y_\mu [z_1,\ldots,z_N]$ are all canonically isomorphic as $\CC[\hbar,z_1,\ldots,z_N]$--algebras by \cite[Section 5.8]{FKPRW}, although they are not isomorphic as graded algebras.

Consider the filtration on $\CBone$ induced by half of the homological grading on $\CB$.  By choosing $\nu_1,\nu_2$ appropriately the map (\ref{eq: Yangian surjection}) respects filtrations, and thus there is an induced map of graded $\CC[\hbar,z_1,\ldots,z_N]$--algebras
\begin{equation}
\label{eq: Y to CB}
\bY_\mu [z_1,\ldots,z_N] \longrightarrow \CB
\end{equation}
 See \cite[Section B(vii)]{BFN2} for more details. It is known that the corresponding map of associated graded algebras is surjective:
\begin{equation*}
\gr Y_\mu[z_1,\ldots,z_N] \twoheadlongrightarrow \gr \CBone
\end{equation*}
by \cite[Lemma B.27]{BFN2}.  This corresponds to a closed embedding of schemes $\overline{\cW}_\mu^{\underline{\lambda}} \subset \cW_\mu \times \mathbb{A}^N$.  Our goal is to prove the following upgrade of this result:
\begin{Theorem}
\label{thm: finite type surjectivity}
For any $\mu$, there is a surjection of graded $\CC[\hbar]$--algebras
$$\mathbf{Y}_\mu[z_1,\ldots,z_N] \twoheadlongrightarrow \CB$$
\end{Theorem}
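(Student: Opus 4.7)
The plan is to combine the two surjectivity results already in hand: Corollary \ref{cor: h=1 version} gives surjectivity of $\phi \colon \bY_\mu[z_1,\ldots,z_N] \to \CB$ upon specializing $\hbar = 1$, while \cite[Lemma B.27]{BFN2} gives surjectivity modulo $\hbar$. Setting $C \DEF \Coker(\phi)$ as a graded $\CC[\hbar]$--module, these translate respectively to $C[\hbar^{-1}] = 0$ and $C = \hbar C$, and the goal is to deduce $C = 0$.

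The elementary Nakayama-type fact we wish to exploit is the following: a finitely generated $\CC[\hbar]$--module $M$ satisfying both $M[\hbar^{-1}] = 0$ and $M = \hbar M$ must vanish, since the first condition forces $\operatorname{Supp}(M) \subset V(\hbar)$, while Nakayama applied at the maximal ideal $(\hbar)$ together with the second gives $M_{(\hbar)} = 0$, so the support misses $V(\hbar)$ as well. When $\mu$ is dominant, so that the grading on $\CB$ is bounded below, this applies essentially directly; this is the content of \cite[Corollary B.28]{BFN2}. For general $\mu$, however, $C$ need not be finitely generated over $\CC[\hbar]$, so we must work piece-by-piece along the Schubert filtration $\CB = \lim_{\longrightarrow} \CB_{\leq \laG}$ of Section \ref{section: monopole}, where each $\CB_{\leq \laG} \DEF H_\ast^{(\GO \times \bF)\rtimes \CC^\times}(\cR^{\mathsf{sph}}_{\leq \laG})$ is finitely generated over $H^\ast_{\bG \times \bF \times \CC^\times}(pt)$ by Theorem \ref{thm: bfn thm}(c).

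Concretely, set $C_{\leq \laG} \DEF \CB_{\leq \laG}/\big(\phi(\bY_\mu[z_1,\ldots,z_N]) \cap \CB_{\leq \laG}\big)$. This is finitely generated over $\CC[\hbar]$, and the natural map $C_{\leq \laG} \to C$ induced by inclusion is injective by the definition of $C_{\leq \laG}$. In particular $C_{\leq \laG}[\hbar^{-1}] = 0$. If one can also establish the refined divisibility $C_{\leq \laG} = \hbar C_{\leq \laG}$, then the elementary fact above gives $C_{\leq \laG} = 0$ for every $\laG$, and passing to the direct limit yields $C = 0$, as desired.

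The main obstacle is this last verification that $C_{\leq \laG} = \hbar C_{\leq \laG}$: the global $\hbar$--divisibility coming from \cite[Lemma B.27]{BFN2} must be refined to the level of Schubert pieces. Concretely, for any $b \in \CB_{\leq \laG}$ one needs to produce a representative in $\CB_{\leq \laG}$ of its $\hbar$--quotient $b/\hbar \in C$. Establishing this compatibility rests on carefully matching the filtration defining $\bY_\mu[z_1,\ldots,z_N]$ with the Schubert filtration on $\CB$, using the explicit image formulas (\ref{eq: Y to CB 1})--(\ref{eq: Y to CB 3}) of the Yangian generators together with the dominant case \cite[Corollary B.28]{BFN2} as an input.
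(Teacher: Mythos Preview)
Your framework is sensible, and the translation of the two known surjectivity statements into $C[\hbar^{-1}]=0$ and $C=\hbar C$ is correct. However, the proposal has two issues, one minor and one substantial.

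\textbf{Minor.} The claim that $C_{\leq \laG}$ is finitely generated over $\CC[\hbar]$ is not right: $\CB_{\leq\laG}$ is free of finite rank over $H^\ast_{\bG\times\bF\times\CC^\times}(pt)$, which is itself of infinite rank over $\CC[\hbar]$. This is easily repaired --- run the determinant-trick Nakayama argument over $H^\ast_{\bG\times\bF\times\CC^\times}(pt)$ instead, noting that this ring lies in $\Im\phi$, so $C_{\leq\laG}$ is indeed a finitely generated module over it.

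\textbf{Substantial.} The step you flag as ``the main obstacle'' is genuinely unproven, and your sketch does not close it. The global statement $C=\hbar C$ says: for $b\in\CB_{\leq\laG}$ there exists $y\in\Im\phi$ with $b-y\in\hbar\CB$. To get $C_{\leq\laG}=\hbar C_{\leq\laG}$ you need $y\in\Im\phi\cap\CB_{\leq\laG}$. There is no a~priori reason the global $y$ can be corrected to live in the Schubert piece: \cite[Lemma B.27]{BFN2} is a geometric statement about $\CC[\cM_C]$ as a ring and does not obviously refine along the support filtration. Your suggestion to ``match the filtration defining $\bY_\mu[z_1,\ldots,z_N]$ with the Schubert filtration'' would require constructing a \emph{second} filtration on the Yangian (a loop or PBW-height filtration, distinct from $F_{\nu_1,\nu_2}$) and proving that $\phi$ is \emph{strictly} compatible with it; the formulas (\ref{eq: Y to CB 1})--(\ref{eq: Y to CB 3}) only locate the simplest generators. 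Note too that $\CC[\hbar,\hbar^{-1}]/\CC[\hbar]$ shows the conditions $M[\hbar^{-1}]=0$ and $M=\hbar M$ do not force $M=0$ even for graded modules with one-dimensional pieces bounded above, so there is no purely formal escape.

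\textbf{What the paper does instead.} The paper avoids Nakayama entirely. It picks $\eta$ dominant with $\mu+\eta$ dominant, and uses the Yangian shift homomorphisms $\iota_{\mu+\eta,-\eta,0}$, $\iota_{\mu+\eta,0,-\eta}$ together with their Coulomb-branch counterparts (\ref{eq: CB shift 1})--(\ref{eq: CB shift 2}), assembled into commuting squares (Lemma \ref{lem: shift homomorphisms are compatible}). The key structural input is that $\CB$ is generated by its subalgebras supported over the positive and negative parts $\Gr^\pm$ of the affine Grassmannian (a variant of Proposition \ref{prop: quiver generated by minuscules}), and that (\ref{eq: CB shift 1}) is an isomorphism on the positive part while (\ref{eq: CB shift 2}) is an isomorphism on the negative part. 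Thus every generator of $\CB$ lies in the image of one of the two squares, whose top arrows are already surjective by the dominant case \cite[Corollary B.28]{BFN2}. This reduces general $\mu$ directly to dominant $\mu$ with no cokernel bookkeeping.
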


When $\mu$ is dominant this theorem was proven in \cite[Corollary B.28]{BFN2}, a result that we will make use of in our proof.  As a consequence of the theorem, Conjecture \ref{conj: poisson gens} is true in the finite ADE case, since the Yangian is Poisson generated by $\{ A_i^{(r)}, E_i^{(1)},F_i^{(1)}: r\geq 1, i \in I\}$, c.f.~Remark \ref{rmk: only need f=1}.

Let $\eta$ be a dominant coweight such that $\mu + \eta$ is dominant. By \cite[Proposition 3.8]{FKPRW} there are {\bf shift homomorphisms} $\iota_{\mu+\eta, -\eta, 0}$ and  $\iota_{\mu+\eta, 0, -\eta}$, which are both injective maps $Y_{\mu+\eta} \hooklongrightarrow Y_\mu$.  By looking at generators,  these maps {strictly} respect filtrations: $\iota_{\mu+\eta,-\eta,0}$ takes $F_{\nu_1+\eta, \nu_2} Y_{\mu+\eta} $ to $F_{\nu_1,\nu_2} Y_\mu$, while $\iota_{\mu+\eta, 0, -\eta}$ takes $F_{\nu_1, \nu_2+\eta} Y_{\mu+\eta}$ to $F_{\nu_1,\nu_2} Y_\mu$. Thus there are induced injective homomorphisms of $\CC[\hbar]$--algebras $\mathbf{Y}_{\mu+\eta} \hooklongrightarrow \mathbf{Y}_\mu$, which respect appropriate gradings.  We extend these homomorphisms $\CC[z_1,\ldots,z_N]$--linearly to get maps
\begin{equation}
\bY_{\mu+\eta}[z_1,\ldots,z_N] \hooklongrightarrow  \bY_\mu[z_1,\ldots,z_N]
\end{equation}

We can define analogous homomorphisms for Coulomb branches: choose a vector space $U = \bigoplus_{i\in I} U_i$ such that $\dim_\CC U_i = \eta_i$.  We consider $U$ to have trivial action of $\bF$.  As in \cite[Remark 5.14]{BFN1}, there are homomorphisms
\begin{align}
H_\ast^{(\GO \times \bF)\rtimes \CC^\times} ( \cR^{\mathsf{sph}}_{\bG, \bN \oplus \bigoplus_{i\in I} \Hom(U_i, V_i) } ) & \longrightarrow H_\ast^{(\GO\times \bF)\rtimes \CC^\times}( \cR^{\mathsf{sph}}_{\bG, \bN} ), \label{eq: CB shift 1} \\
H_\ast^{(\GO \times \bF)\rtimes \CC^\times} ( \cR^{\mathsf{sph}}_{\bG, \bN \oplus \bigoplus_{i\in I} \Hom(V_i, U_i) } ) & \longrightarrow H_\ast^{(\GO\times \bF)\rtimes \CC^\times}( \cR^{\mathsf{sph}}_{\bG, \bN} )  \label{eq: CB shift 2} 
\end{align}
See also \cite[Remark 3.11]{BFN2}.  Of course, the right-hand sides are both just $\CB$.  Meanwhile, the left-hand sides are also Coulomb branches of quiver gauge theories, {but with only partial flavour symmetry deformation}.  In fact the left-hand sides are isomorphic, since orientation of edges for a quiver gauge theory does not change the Coulomb branch up to isomorphism \cite[Section 6(viii)]{BFN1}. 

\begin{Lemma}
\label{lem: shift homomorphisms are compatible}
There is a commutative diagram of $\CC[\hbar,z_1,\ldots,z_N]$--algebras
$$
\begin{tikzcd}
\bY_{\mu+\eta}[z_1,\ldots,z_N]  \ar[r, "(\ref{eq: Y to CB})"] \ar[d, "\iota_{\mu+\eta, -\eta, 0}"] & H_\ast^{(\GO \times \bF)\rtimes \CC^\times} ( \cR^{\mathsf{sph}}_{\bG, \bN \oplus \bigoplus_{i\in I} \Hom(U_i, V_i) } ) \ar[d, "(\ref{eq: CB shift 1})"] \\
\bY_\mu[z_1,\ldots,z_N] \ar[r,"(\ref{eq: Y to CB})"] & \CB
\end{tikzcd}
$$
and similarly one relating $\iota_{\mu+\eta, 0, -\eta}$ and the homomorphism $(\ref{eq: CB shift 2})$.
\end{Lemma}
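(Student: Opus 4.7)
The plan is to reduce to a computation on a generating set. The top horizontal arrow is obtained by applying Corollary \ref{cor: h=1 version} (and the Rees algebra considerations from Section \ref{section: relationship to shifted Yangians}) to the enlarged pair $(\bG, \bN \oplus \bigoplus_{i \in I} \Hom(U_i, V_i))$, so it is specified on generators by exactly the formulas (\ref{eq: Y to CB 1}--\ref{eq: Y to CB 3}), with the sole difference being that the target Coulomb branch is the one with the extra framing $U$. The bottom arrow is given by the same formulas for the original $\bN$. Since $\bY_{\mu+\eta}[z_1,\ldots,z_N]$ is generated over $\CC[\hbar, z_1,\ldots,z_N]$ by $\{A_i^{(r)}, E_i^{(1)}, F_i^{(1)}\}$ (the images of higher $E_i^{(r)}, F_i^{(r)}$ being determined by the Yangian relations together with the $A_i^{(r)}$, as in Remark \ref{rmk: only need f=1}), it is enough to verify commutativity on these three families of generators.

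First I would recall the explicit action of $\iota_{\mu+\eta, -\eta, 0}$ from \cite[Proposition 3.8]{FKPRW}, which modifies $A_i(u)$ and $F_i(u)$ by explicit polynomial factors of degree $\eta_i$ whose roots encode the coweight $\eta$, while fixing the $E_i^{(r)}$ generators (and $\iota_{\mu+\eta, 0, -\eta}$ does the symmetric thing). Next I would compute the Coulomb branch restriction map (\ref{eq: CB shift 1}) (from \cite[Remark 5.14]{BFN1}) on the images of our generators. For the Cartan $A_i^{(r)}$ the check is tautological: on both sides the generator is sent to $(-1)^r e_r(\eV_{i,1},\ldots,\eV_{i,\bv_i})$, which is independent of the ambient representation. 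The nontrivial content is on $E_i^{(1)}$ and $F_i^{(1)}$, whose images are the minuscule monopole operators $M_{\varpi_{i,1}^\ast, 1}$ and $M_{\varpi_{i,1}, 1}$: by construction, (\ref{eq: CB shift 1}) sends the enlarged monopole class to the original monopole class multiplied by the Euler class of the quotient of the $\NO$-lattice for the extra matter $\bigoplus_i \Hom(U_i, V_i)$ by its image at $z^{\varpi_{i,1}}$.

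The key step is then to identify this Euler class, on the nose and including signs, with the polynomial prefactor produced by $\iota_{\mu+\eta, -\eta, 0}$ (respectively $\iota_{\mu+\eta, 0, -\eta}$) on the corresponding Yangian generator. This is a direct enumeration of the weights of $\Hom(U_i, V_i)$ (resp.\ $\Hom(V_i, U_i)$) appearing in $z^{\varpi_{i,1}}\bN' \cap \NO'$ but not in $z^{\varpi_{i,1}} \bN \cap \NO$, read off from fixed bases of the $U_i$ and $V_i$. With the matching established on each of the three types of generators, the commutativity of the first square follows. The second square is then obtained by swapping the roles of $E$ and $F$, and of the two extra matter representations $\Hom(U_i, V_i)$ and $\Hom(V_i, U_i)$.

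The main obstacle is precisely this bookkeeping: aligning the sign conventions of (\ref{eq: Y to CB 2}--\ref{eq: Y to CB 3}), the normalization choices in the shift homomorphisms of \cite[Proposition 3.8]{FKPRW}, the half-$\hbar$ weight convention on $\bN$ from Section \ref{section: quantized Coulom branch algebras}, and the degree conventions of the Rees algebras $\bY_{\mu+\eta}$ and $\bY_\mu$. Once these are reconciled on the three families of generators above, the remainder of the argument is formal, and lifts from the $\hbar=1$ version to the graded $\CC[\hbar]$-version because all maps involved are homogeneous.
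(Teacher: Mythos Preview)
Your approach is correct and follows the same overall strategy as the paper: reduce to the generators $A_i^{(r)}, E_i^{(1)}, F_i^{(1)}$ and verify commutativity on each. The paper's execution differs in one useful way: rather than computing the effect of (\ref{eq: CB shift 1}) on monopole operators directly, it embeds both Coulomb branches into a common ring of difference operators via $\bz^\ast(\iota_\ast)^{-1}$ and invokes \cite[Lemma 5.24]{BFN1}, which asserts that these embeddings are intertwined by (\ref{eq: CB shift 1}). The check on generators then becomes a comparison of explicit difference operators already computed in \cite[Appendix B]{BFN2}. Your direct Euler-class description of (\ref{eq: CB shift 1}) on minuscule monopole classes is valid, but the cleanest justification for it is essentially this same difference-operator route; routing through $\sD$ as the paper does therefore sidesteps much of the sign and convention bookkeeping you correctly flagged as the main obstacle.
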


\begin{proof}
As mentioned above, the top-right corner of the diagram is the Coulomb branch of a quiver gauge theory.  The top arrow thus exists by our discussion above; note that we are considering only part of the possible flavour symmetry, ignoring the maximal torus of $\prod_i \GL(U_i)$.

To verify the commutativity of the diagram, we can embed the right-hand sides into a ring of difference operators via their respective homomorphisms $\bz^\ast (\iota_\ast)^{-1}$ from \cite[Appendix A(i)]{BFN2}.  By \cite[Lemma 5.24]{BFN1}, these homomorphisms $\bz^\ast (\iota_\ast)^{-1}$ are intertwined by (\ref{eq: CB shift 1}).  Now, it remains to verify that the images of  both Yangians' generators agree under the two composed maps to difference operators.  This reduces to comparing the images of the classes $M_{\varpi_{i,1}, f}$ and $M_{\varpi_{i,1}^\ast, f}$ in difference operators, which can be computed as in \cite[Appendix B]{BFN2}.  
\end{proof}

\begin{proof}[Proof of Theorem \ref{thm: finite type surjectivity}]
We make use of the commutative diagrams from Lemma \ref{lem: shift homomorphisms are compatible}.  First note that the top arrows are both surjections since $\mu+\eta$ is dominant, by \cite[Corollary B.28]{BFN2}.  It thus suffices to show that $\CB$ is generated by the combined images of the maps (\ref{eq: CB shift 1}) and (\ref{eq: CB shift 2}). 

To prove the latter claim, we apply a variation on Proposition \ref{prop: quiver generated by minuscules}: $\CB$ is generated by its subalgebras corresponding to the positive and negative parts of the affine Grassmannian (c.f.~Remark \ref{remark: Zastava}). The argument is similar, but simpler: the generators for any chamber in the hyperplane arrangement can each be chosen positive or negative.  That is, there is a generating set for $\CB$ such that each generator lives either in the subalgebra for the positive or negative part of the affine Grassmannian.  

Now, we observe that the homomorphism (\ref{eq: CB shift 1}) is an isomorphism on the subalgebras corresponding to the positive part of the affine Grassmannian, as in \cite[Remark 3.15]{BFN2}.  Similarly (\ref{eq: CB shift 2}) is an isomorphism for the negative part of the affine Grassmannian.  It follows from the existence of the generating set above that the images of (\ref{eq: CB shift 1}) and (\ref{eq: CB shift 2}) contain a full set of generators for $\CB$.  By the above diagrams, these images are all contained in the image of $\bY_\mu[z_1,\ldots,z_N] \rightarrow \CB$, so  this map is surjective.
\end{proof}

\subsection{Embedding into difference operators, and OGZ algebras}
\label{section: OGZ}

Let $\sD$ be the $\CC[z_1,\ldots,z_N]$--algebra with generators $\eV_{i,r}, \shV_{i,r}^{\pm 1}$ and $(\eV_{i,r} - \eV_{i,s} - n)^{-1}$, where $i \in I$, $1\leq r \neq s \leq \bv_i$ and $n \in \ZZ$.  The relations are that $\shV_{i,r} \eV_{j, s} = (\eV_{j,s} + \delta_{i,j} \delta_{r,s}) \shV_{i,r}$, while all other elements commute.

There is an injective homomorphism $\bz^\ast (\iota_\ast)^{-1} : \CBone \hookrightarrow \sD$, defined in \cite[Remark 5.23]{BFN1} and studied in \cite[Appendix A(i)--(ii)]{BFN2}, which is essentially the {\bf Abelianization map} from the physics literature \cite[Section 4.2]{BDG}.  The images of minuscule dressed monopole operators can be explicitly computed, and in particular the images of (\ref{eq: Y to CB 1}--\ref{eq: Y to CB 3}) are, using Yangian notation,
\begin{align}
A_i^{(r)} & \mapsto (-1)^r e_r(\eV_{i,1},\ldots, \eV_{i,\bv_i} ), \label{eq: difference ops 1}\\
E_i^{(1)} & \mapsto - \sum_{r=1}^{\bv_i} \prod_{k: i_k = i}(\eV_{i,r} - z_k - \tfrac{1}{2}) \frac{\prod_{j\rightarrow i} \prod_{s=1}^{\bv_j} (\eV_{i,r} - \eV_{j,s} - \tfrac{1}{2} ) }{\prod_{s=1, s\neq r}^{\bv_i}(\eV_{i,r} - \eV_{i,s}) } \shV_{i,r}^{-1} \label{eq: difference ops 2}\\
F_i^{(1)} & \mapsto \sum_{r=1}^{\bv_i} \frac{\prod_{i\rightarrow j} \prod_{s=1}^{\bv_j} (\eV_{i,r} - \eV_{j,s} + \tfrac{1}{2}) }{\prod_{s=1, s\neq r}^{\bv_i} (\eV_{i,r} - \eV_{i,s})} \shV_{i,r} \label{eq: difference ops 3}
\end{align}
This calculation appears in \cite[Appendix B]{BFN2}.  As a corollary of the previous two sections, 

\begin{Corollary}
$\bz^\ast(\iota_\ast)^{-1}$ identifies $\CBone$ with the $\CC[z_1,\ldots,z_N]$--subalgebra of $\sD$ generated by the elements (\ref{eq: difference ops 1}--\ref{eq: difference ops 1}), taken over all $i\in I$.
\end{Corollary}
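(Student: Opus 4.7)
The plan is to combine Theorem \ref{thm: Yangian surjectivity} (equivalently Corollary \ref{cor: h=1 version}) with the known injectivity of $\bz^\ast(\iota_\ast)^{-1}$ from \cite[Appendix A]{BFN2}. Let $\cS \subset \sD$ denote the $\CC[z_1,\ldots,z_N]$--subalgebra generated by the operators (\ref{eq: difference ops 1}--\ref{eq: difference ops 3}); since $\bz^\ast(\iota_\ast)^{-1}$ is an injective algebra map, the claim reduces to the equality $\bz^\ast(\iota_\ast)^{-1}(\CBone) = \cS$.

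The inclusion $\cS \subseteq \bz^\ast(\iota_\ast)^{-1}(\CBone)$ is immediate: comparing (\ref{eq: Y to CB 1}--\ref{eq: Y to CB 3}) with (\ref{eq: difference ops 1}--\ref{eq: difference ops 3}) shows that the generators of $\cS$ are exactly the images of the Yangian generators $A_i^{(r)}, E_i^{(1)}, F_i^{(1)}$ under the composition of $Y_\mu[z_1,\ldots,z_N] \twoheadrightarrow \CBone$ with $\bz^\ast(\iota_\ast)^{-1}$.

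For the reverse inclusion, I would argue that $A_i^{(r)}, E_i^{(1)}, F_i^{(1)}$ already generate $\CBone$ as a $\CC[z_1,\ldots,z_N]$--algebra. By Theorem \ref{thm: Yangian surjectivity}, a generating set is furnished by $H_{\bG\times\bF}^\ast(pt)$ together with the full family of dressed minuscule monopoles $M_{\varpi_{i,1}, f}, M_{\varpi_{i,1}^\ast, f}$. Node-wise, $H_\bG^\ast(pt) = \CC[\eV_{i,r}]^\Sigma$ is generated by the elementary symmetric polynomials in $\eV_{i,1},\ldots,\eV_{i,\bv_i}$, which are (up to sign) the $A_i^{(r)}$ by (\ref{eq: Y to CB 1}). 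Remark \ref{rmk: only need f=1} then reduces the dressings to $f = 1$ via iterated commutators with the elements $S_i^{(r)} \in H_\bG^\ast(pt)$; these commutators are dictated entirely by the shifted Yangian relations, which hold in $\CBone$ thanks to Corollary \ref{cor: h=1 version}. Under (\ref{eq: Y to CB 2}--\ref{eq: Y to CB 3}) the trivially dressed monopoles $M_{\varpi_{i,1}, 1}, M_{\varpi_{i,1}^\ast, 1}$ match (up to sign) $F_i^{(1)}, E_i^{(1)}$, which closes the loop.

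The main obstacle is confirming that the passage from general dressing $f$ to the trivial dressing $f = 1$ does not escape the subalgebra generated by $\{A_i^{(r)}, E_i^{(1)}, F_i^{(1)}\}_{i\in I, r \geq 1}$. This closure is automatic once the Yangian relations are available in $\CBone$: each $S_i^{(r)}$ is a polynomial in the $A_i^{(r)}$, and commutators of Yangian generators expand as non-commutative polynomials in the stated generators, so the procedure of Remark \ref{rmk: only need f=1} remains within $\cS$ throughout.
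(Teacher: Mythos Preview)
Your proposal is correct and follows essentially the same approach as the paper, which simply states ``As a corollary of the previous two sections'' without further elaboration. You have correctly unpacked this: injectivity of $\bz^\ast(\iota_\ast)^{-1}$ together with the surjection $Y_\mu[z_1,\ldots,z_N] \twoheadrightarrow \CBone$ from Corollary~\ref{cor: h=1 version}, plus Remark~\ref{rmk: only need f=1} to reduce to the generators $A_i^{(r)}, E_i^{(1)}, F_i^{(1)}$, is exactly what is needed.
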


This result may be considered as another definition of $\CBone$, which is analogous to the definition of {\bf orthogonal Gelfand-Zetlin algebras} due to Mazorchuk \cite{Maz}.  More precisely, let us take our quiver $Q$ to be the orientation 
$$1\leftarrow \cdots \leftarrow n-1$$ 
of the Dynkin diagram of type $A_{n-1}$, and assume our graded vector space $W = W_{n-1}$. In particular $N = \dim_\CC W_{n-1} = \bw_{n-1}$.  To more closely align with the notation of \cite[Section 5.1]{Web6}, denote
\begin{align}
x_{i, k} & = - \eV_{i,k} - \tfrac{i}{2},  & \text{ for } 1\leq i \leq n-1\text{ and } 1\leq k \leq \bv_i,  \\
x_{n,k} & = -z_k-\tfrac{n}{2}, & \text{ for } 1\leq k \leq N, \\
\varphi_{i,k}^{\pm 1} & = \shV_{i,k}^{\mp}, & \text{ for } 1\leq i \leq n-1\text{ and } 1\leq k \leq \bv_i
\end{align}
Note that $\varphi_{i,k} x_{j,\ell} = (x_{j,\ell} + \delta_{i,j} \delta_{k,\ell}) \varphi_{i,k}$.    Define $\br = (r_1,\ldots, r_n)$ by $r_i = \bv_i$ for $1\leq i \leq n-1$ and $r_n = N = \bw_{n-1}$.   Finally, denote the right-hand side of (\ref{eq: difference ops 2}) by $X_i^+$, and of (\ref{eq: difference ops 3}) by $X_i^-$.  Explicitly, we have
\begin{align}
X_i^+ & =  -\sum_{k=1}^{r_i} \frac{\prod_{\ell=1}^{r_{i+1}} (x_{i+1, \ell} - x_{i,k})}{\prod_{\ell =1, \ell \neq k}^{r_i} (x_{i,\ell} - x_{i,k})} \varphi_{i,k}, \\
X_i^- &= \sum_{k=1}^{r_i} \frac{ \prod_{\ell=1}^{r_{i-1}} (x_{i-1, \ell} - x_{i,k}) }{\prod_{\ell=1, \ell \neq k}^{r_i} (x_{i,\ell} - x_{i,k}) } \varphi_{i,k}^{-1}
\end{align}
Up to a sign, these are equal to the same-named elements from \cite[Section 5.1]{Web6}.   
%
%

We define the OGZ algebra $U(\br) \subset \sD$ as the subalgebra generated by all $X_i^\pm$ and all symmetric polynomials $\CC[x_{i,r}]^{\Gamma}$, where $\Gamma = \prod_{1\leq i \leq n} \Sigma_{r_i}$.  We may slightly enlarge this algebra by allowing arbitrary polynomials in $x_{n,k}$ for $1\leq k \leq r_n$ (note that these elements are central). Denoting this enlarged algebra by $\widetilde{U}(\br) \subset \sD$, we have $U(\br) = \widetilde{U}(\br)^{\Sigma_{r_n}}$ under the natural action.  
\begin{Corollary}
The map $\bz^\ast (\iota_\ast)^{-1}$ defines an isomorphism $\CBone \stackrel{\sim}{\rightarrow} \widetilde{U}(\br)$.
\end{Corollary}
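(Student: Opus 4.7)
The plan is to reduce this corollary to the previous one by a straightforward change of variables, checking that the generating sets for the two subalgebras of $\sD$ match.  By the previous corollary, the image of $\bz^\ast(\iota_\ast)^{-1}$ is the $\CC[z_1,\ldots,z_N]$--subalgebra of $\sD$ generated by the operators (\ref{eq: difference ops 1})--(\ref{eq: difference ops 3}), taken over all $i\in I$.  By definition, $\widetilde{U}(\br)$ is generated by all $X_i^\pm$, by $\CC[x_{i,r}]^\Gamma$, and by arbitrary polynomials in $x_{n,1},\ldots,x_{n,r_n}$.  So I only need to check that under the substitutions $x_{i,k} = -\eV_{i,k} - i/2$, $x_{n,k} = -z_k - n/2$, $\varphi_{i,k}^{\pm 1} = \shV_{i,k}^{\mp}$, these two generating sets coincide (up to scalars and obvious polynomial manipulations in the base ring $\CC[z_1,\ldots,z_N] = \CC[x_{n,1},\ldots,x_{n,r_n}]$).

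First I would handle the Cartan part.  The images (\ref{eq: difference ops 1}) generate, together with $\CC[z_1,\ldots,z_N]$, exactly the subalgebra $\CC[\eV_{i,k}^{\Sigma_{\bv_i}}, z_k] \subset \sD$ of polynomials symmetric in each group $\eV_{i,\bullet}$ and arbitrary in the $z_k$.  Under our substitution this is $\CC[x_{i,k}]^\Gamma$ enlarged by allowing arbitrary polynomials in the central variables $x_{n,k}$, which is precisely the Cartan part of $\widetilde{U}(\br)$.

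Next I would identify the raising/lowering operators.  For the quiver $1 \leftarrow 2 \leftarrow \cdots \leftarrow n-1$, the convention $j\rightarrow i$ means $j = i+1$, and $i\rightarrow j$ means $j=i-1$; the framing appears only at $i = n-1$.  Substituting into (\ref{eq: difference ops 2}), each factor $(\eV_{i,r} - \eV_{j,s} - \tfrac{1}{2})$ with $j=i+1$ becomes $x_{i+1,s} - x_{i,r}$, and for $i = n-1$ the framing factor $(\eV_{n-1,r} - z_k - \tfrac{1}{2})$ becomes $x_{n,k} - x_{n-1,r}$, so both cases can be written uniformly as $\prod_{\ell=1}^{r_{i+1}} (x_{i+1,\ell} - x_{i,k})$.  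The denominators transform as $\prod_{s\neq r}(\eV_{i,r}-\eV_{i,s}) = (-1)^{r_i-1}\prod_{\ell \neq k}(x_{i,\ell}-x_{i,k})$, and $\shV_{i,r}^{-1} = \varphi_{i,r}$.  Combining signs, (\ref{eq: difference ops 2}) becomes a nonzero scalar multiple of $X_i^+$.  The same calculation applied to (\ref{eq: difference ops 3}), using $\shV_{i,r} = \varphi_{i,r}^{-1}$ and $i\rightarrow j$ meaning $j=i-1$, shows that (\ref{eq: difference ops 3}) is a scalar multiple of $X_i^-$.  Hence the two generating sets agree up to scalars, so the subalgebras coincide.

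Since $\bz^\ast(\iota_\ast)^{-1}$ is injective (stated earlier in the section), the resulting map $\CBone \to \widetilde{U}(\br)$ is an isomorphism.  I do not expect a genuine obstacle here; the only care required is bookkeeping of signs and the special role played by the $i=n-1$ node (where the framing factors line up with the $x_{n,\bullet}$ variables), and checking that no extra generators are needed on either side since $\CC[z_1,\ldots,z_N]$ on one side corresponds exactly to $\CC[x_{n,1},\ldots,x_{n,r_n}]$ on the other.
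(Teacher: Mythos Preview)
Your proposal is correct and follows exactly the approach the paper intends: the corollary is stated without proof in the paper, being an immediate consequence of the previous corollary together with the explicit change of variables and the displayed formulas for $X_i^\pm$ that precede it. You have simply written out this implicit verification.

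One minor bookkeeping slip: under the substitution $x_{i,k} = -\eV_{i,k} - i/2$ one has $\eV_{i,r} - \eV_{i,s} = x_{i,s} - x_{i,r}$, so in fact $\prod_{s\neq r}(\eV_{i,r}-\eV_{i,s}) = \prod_{\ell\neq k}(x_{i,\ell}-x_{i,k})$ with no extra sign $(-1)^{r_i-1}$. This does not affect your argument, since you only need equality up to a nonzero global scalar (independent of the summation index), and the paper itself only claims the $X_i^\pm$ agree with those of \cite{Web6} ``up to a sign''.
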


The astute reader will note that our algebra $\widetilde{U}(\br)$ is {\em not quite} the OGZ algebra as defined in \cite[Section 3]{Maz}, \cite[Section 4.4]{Hartwig}: in both of these works, the factors $\varphi_{i,k}^\pm$ of $X_i^\pm$  appear on the {\em left} instead of the right.  In fact, this defines an isomorphic algebra: 

We may think of $\widetilde{U}(\br)$ as the image of $Y_\mu[z_1,\ldots,z_N] \rightarrow \sD$. There is an anti-involution of the shifted Yangian defined by $E_i^{(1)} \leftrightarrow F_i^{(1)}$, fixing the generators $A_i^{(r)}$ and $z_k$.  There is also an anti-involution of $\sD$ defined by $\varphi_{i,k}^\pm \mapsto \varphi_{i,k}^\mp$, fixing the generators $x_{i,k}$. Composing our map $Y_\mu[z_1,\ldots, z_N] \rightarrow \sD$ with both anti-involutions, we get another map $Y_\mu [z_1,\ldots, z_N ] \rightarrow \sD$, which has the property that in the images of $E_i^{(1)}, F_i^{(1)}$ the terms $\varphi_{i,k}^\pm$ appear on the left.  

The image of this new homomorphism $Y_\mu[z_1,\ldots,z_N] \rightarrow \sD$ is isomorphic to $\widetilde{U}(\br)$, but has a further discrepancy from \cite{Maz} and \cite{Hartwig}: $X_i^\pm$ now contain terms with $x_{i\mp 1, k}$ in their numerators, instead of $x_{i\pm 1, k}$. To rectify this, when defining $\CBone$ we take the opposite orientation of our quiver $1\rightarrow 2\rightarrow \cdots \rightarrow n-1$, and also choose the orientation $\Hom(V_{n-1}, W_{n-1})$ in the definition of $\bN$.  By \cite[Section 6(viii)]{BFN1}, the resulting algebra is isomorphic. This shows that our algebra ${U}(\br)$ is indeed isomorphic to the OGZ algebra from \cite{Maz}, \cite{Hartwig}.

\section{Coulomb branch categories}
\label{sec: coulomb cat}

In this section, following Webster \cite[Section 3]{Web3}, we  define a category such that the algebra $\ICB$ appears as an endomorphism algebra.  Note that there are many variations on this category, and we have chosen rather plain conventions, in order to simplify our goal of relating this category with the KLR Yangian algebra $\Ya$ from \cite[Section 4.4]{KTWWY2}.

\subsection{Webster's extended BFN category}

Denote by $\bt_\RR \subset \bt = \operatorname{Lie}\bT$ the set of diagonal matrices with entries in $\RR$.  For a point $\eta = (\eta_{i, r}) \in \bt_\RR$, we can associate linear operators on $\bg((z))$ and $\NK$, defined by the (adjoint) action of $\eta + z\tfrac{\partial}{\partial z}$.  The eigenvalues of these operator are in $\RR$.  We define $\Iwa_\eta \subset \GK$ to be the Iwahori subgroup whose Lie algebra is the sum of the non-negative eigenspaces. Similarly, we define $U_\eta\subset \NK$ to be sum of the non-negative degree eigenspaces.  It is easy to see that $\Iwa_\eta$ preserves the subspace $U_\eta$.

For generic $\eta$, note that $\Iwa_\eta$ only depends on which alcove $\eta$ lies in, while $U_\eta$ only depends on $\eta$'s cell in a hyperplane arrangement determined by the weights of $\bN$.  Following \cite[Definition 5.2]{BFN1} let us define a {\bf generalized root} for the pair $(\bG, \bN)$ to be either (i) a root of $\operatorname{Lie}\bG$ or (ii) a weight of $\bN$.  	Denote by $\objs \subset \bt_\RR$ the complement of all {integer shifts} of generalized root hyperplanes.  Explicitly, these are the hyperplanes
\begin{align}
\eV_{i,r} - \eV_{i,s} & = n, \ \ \text{ for } i\in I, \text{ all }r\neq s, \text{ and } n\in \ZZ  \label{eq: hyper 1}\\
\eV_{i,r} & = n, \ \ \text{ for } i\in I,\text{ all }  r, \text{ and } n \in \ZZ, \label{eq: hyper 2}\\
\eV_{i,r} - \eV_{j,s} & = n, \ \ \text{ for } i \sim j, \text{ all }r,s, \text{ and } n \in \ZZ \label{eq: hyper 3}
\end{align}
Their complement $\objs$ is a disjoint union of cells. By the above, the pair $(\Iwa_\eta, U_\eta)$ depends only on the cell of $\eta$ in $\objs$.

\begin{Remark}
As in \cite{Web3}, we could incorporate non-integral shifts above.  For example, this is desireable in situations where some weight of $\bN$ coincides with a root of $\bG$.  This does not occur for a quiver without loops, so we omit this complication.
\end{Remark}

For any $\eta, \eta' \in \objs$, we define analogues of (\ref{eq: R ICB def}) and (\ref{eq: ICB def}),
\begin{align}
\catR{\eta'}{\eta} & \DEF  \left\{ [g, s] \in \GK \times^{\Iwa_\eta} U_\eta :  gs \in U_{\eta'} \right\}, \\
\CCB{\eta'}{\eta} &\DEF H^{(\Iwa_{\eta'} \times \bF)\rtimes \CC^\times}_\ast( \catR{\eta'}{\eta} )
\end{align}
The group action and Borel-Moore homology are defined as described in Section \ref{section: quantized Coulom branch algebras}.  An analogue of Theorem \ref{thm: bfn thm} holds, and in particular there is a convolution product 
\begin{equation}
\label{eq: conv cat}
\CCB{\eta''}{\eta'} \otimes \CCB{\eta'}{\eta} \longrightarrow \CCB{\eta''}{\eta}
\end{equation}

Following \cite[Definition 3.5]{Web3}, we take:
\begin{Definition}
The {\bf extended BFN category} $\sB$ has objects the elements $\eta \in \objs$, and morphisms 
$$
\Hom_{\sB}( \eta, \eta') \DEF \CCB{\eta'}{\eta},
$$
with composition of morphisms given by the convolution product (\ref{eq: conv cat}).
\end{Definition}

There is an action of the affine Weyl group $\widehat{\Sigma} = \Sigma \ltimes \ZZ^\bv$ on $\bt$, which preserves $\objs$: the finite Weyl group $\Sigma$ acts via its standard permutation action on $\bt$, while the lattice $\ZZ^\bv$ acts by $\laG \cdot \eta = \eta - \laG$.  With these conventions,  it is not hard to see that
\begin{equation}
\label{eq: Weyl invariance}
\Iwa_{w \cdot \eta} = w \Iwa_\eta w^{-1}, \qquad U_{w\cdot \eta} = w U_\eta,
\end{equation}
where $w\in \widehat{\Sigma}$ acts on $\GK$ and $\NK$ via the embedding $\widehat{\Sigma} \subset \GK$ from Section \ref{sec: Gr and Fl}. 

\subsection{A presentation of \texorpdfstring{$\sB$}{B}}
\label{sec: presentation of cat}

In \cite[Section 3.2]{Web3}, Webster describes the morphisms in $\sB$ in terms of paths in $\bt$.   Let $\paths \subset \bt_\RR$ denote the complement of all pairwise intersections of hyperplanes (\ref{eq: hyper 1}--\ref{eq: hyper 3}).  Then a {\bf path} will mean a piecewise smooth map $\pi: [0,1] \rightarrow \paths$, which is transverse to any hyperplane that it crosses.  We compose paths in the usual way, if their end and start agree.  Precise parametrizations will generally not be important.

Just as $(\Iwa_\eta, U_\eta)$ only depend on the cell of $\eta$ in $\objs$, the morphism corresponding to a path $\pi$ will only depend on the sequence $H_1,\ldots, H_\ell$ of hyperplanes  that it crosses, in order.  We call the total number $\ell$ of hyperplanes that $\pi$ crosses its {\bf length}.  Suppose that $\pi$ crosses the hyperplane $H_k$ at ``time'' $t_k\in (0,1)$, and choose some intermediate times $t_1 < s_1 < t_2 < s_2 <\ldots < s_{\ell - 1} < t_\ell$.  We write $s_0 = \pi(0)$ and $s_\ell = \pi(1)$.  Denote $\eta_k^- = \pi(s_{k-1})$ and $\eta_k^+ = \pi(s_k)$, with associated $\Iwa_k^\pm, U_k^\pm$.   
In other words, $\eta_k^\pm$ lie in adjacent cells across the hyperplane $H_k$, with corresponding pairs $(\Iwa_k^\pm, U_k^\pm)$.
With these conventions, the morphism associated to the hyperplane $H_k$ is of one of two types:

\begin{enumerate}
\item[(a)] If the hyperplane $H_k$ corresponds to a weight of $\bN$ (cases (\ref{eq: hyper 2}--\ref{eq: hyper 3})) then $\Iwa_k^+ = \Iwa_k^-$, while $U_k^\pm$ differ by one dimension.  We let 
\begin{equation*}
r(\eta_k^+, \eta_k^-) = [ \pi^{-1}(1) ]  = \Hom_{\sB}(\eta_k^-, \eta_k^+)
\end{equation*}
be the fiber over the unit point of $\GK / \Iwa_k^-$, under the map $\pi : \catR{\eta_k^+}{\eta_k^-} \rightarrow \GK / \Iwa_k^-$ sending $[g, s] \mapsto [g]$ as usual.  
\item[(b)] If the hyperplane $H$ correspond to a root of $\bG$ (case (\ref{eq: hyper 1})) then $U_k^+ = U_k^-$, while the Iwahoris $\Iwa_k^\pm$ differ by a single (affine) root $\alpha$. In this case we let
\begin{equation*}
u_\alpha = [ \pi^{-1} \big( \overline{ \Iwa_k^+ \Iwa_k^- / \Iwa_k^-} \big) ]  \in \Hom_{\sB}(\eta_k^-, \eta_k^+)
\end{equation*}
Note that $\overline{ \Iwa_k^+ \Iwa_k^- / \Iwa_k^-} \cong \mathbb{P}^1$ by our assumption.
\end{enumerate}
In this way, we have a morphism $x_k$ for each $H_k$.   Finally, we define the morphism
\begin{equation}
\label{eq: def of r from hyperplane sequence}
\mathbbm{r}_\pi = x_k x_{k-1} \cdots x_1  \ \in \Hom_{\sB}\big(\pi(0), \pi(1) \big)
\end{equation}
This agrees with \cite[Definition 3.11]{Web3}, because of \cite[Relation (3.4e)]{Web3}.  

Besides the elements $\mathbbm{r}_\pi$, for any $w\in \widehat{\Sigma}$ there is an isomorphism 
\begin{equation*}
y_w = [ \pi^{-1}(w) ] \in \Hom_{\sB}(\eta, w \eta)
\end{equation*}
where we think of $w \in \GK / \Iwa_\eta$ as in Section \ref{sec: Gr and Fl}.

Webster gives a complete set of relations between the above elements \cite[Theorem 3.10]{Web3}, and also gives bases for any $\Hom_{\sB}(\eta, \eta')$ \cite[Corollary 3.12]{Web3}.  For our present purposes it will suffice to know that
\begin{equation}
\label{eq: paths and weyl group}
y_w \mathbbm{r}_\pi = \mathbbm{r}_{w \cdot \pi} y_w
\end{equation}
where $\widehat{\Sigma}$ acts on the set of paths pointwise, via its action on $\bt$.

\begin{Remark}
\cite[Theorem 3.10]{Web3} also provides a faithful action of $\sB$ where each $\eta \in \objs$ is assigned a copy of the polynomial ring $H_{\bT\times \bF\times \CC^\times}^\ast(pt)$, which is helpful to keep in mind.  Very roughly speaking, acting in this polynomial representation (i) $r(\eta, \eta')$ is multiplication by a linear polynomial, (ii) $u_\alpha$ is a divided difference operator in the affine root $\alpha$, and (iii) $y_w$ acts by $w$.
\end{Remark}

\subsection{A subcategory}
\label{section: a subcategory}
Recall that we denote $|\bv| = \sum_i \bv_i$. Consider the set 
\begin{equation}
I^\bv \DEF \Big\{ \bi = (i_1,\ldots, i_m) \in I^{|\bv|} : \#\{k : i_k = i\} = \bv_i \text{ for all } i\Big\}
\end{equation}
An element $\bi \in I^\bv$ is equivalent to a total order $<_{\bi}$ on the set of pairs
\begin{equation*}
\left\{ (i, r) : i \in I, 1\leq r \leq \bv_i \right\}
\end{equation*}
which restricts to the order $(i,1) <_{\bi} (i,2)<_\bi \ldots < _\bi(i, \bv_i)$ for each $i\in I$.  

\begin{Example}
If $I = \{1, 2\}$ and $\bv_1 = 3, \bv_2 = 2$, then $\bi = (1,2,1,2,1) \in I^{\bv}$.  It corresponds to the total order
$$
(1,1) <_\bi (2,1) <_\bi (1,2) <_\bi (2,2) <_\bi (1, 3)
$$
\end{Example}

For each $\bi$, choose $\eta = (\eta_{i,r})_{\substack{i\in I, 1\leq r\leq \bv_i}} \in \objs$ with components\footnote{For simplicity, we will choose the $\eta_{i,r}$ to partition the interval $[0,1]$ into equal parts.} $0<\eta_{i,r} <1$, and such that $(i,r) <_{\bi} (j, s)$ implies $\eta_{i,r} < \eta_{j,s}$.   Then
\begin{equation}
U_{\eta_\bi} =  \label{eq: def of subspace for bi}
\end{equation}
\begin{equation*}
\bigoplus_{i\rightarrow j} \Big( \bigoplus_{(i,r) <_\bi (j, s)} \Hom_\CC( V_{i,r}, V_{j,s} )[[z]] \oplus \bigoplus_{(i,r) >_\bi (j,s)} z \Hom_\CC( V_{i,r}, V_{j,s} )[[z]] \Big) \oplus \bigoplus_i \Hom_\CC(W_i, V_i)[[z]] 
\end{equation*}
where we denote the lines $V_{i,r} =\CC e_{i,r} \subset V_i$.  Meanwhile, for {any} $\bi\in I^\bv$ the Iwahori $\Iwa_{\eta_\bi} = \Iwa$ is always our usual one (\ref{eq: the Iwahori}).

\begin{Definition}
The category $\catya$ is the full subcategory of $\cB$ having objects $I^\bv$ (under $\bi \mapsto \eta_\bi$).
\end{Definition}

\begin{Remark}
\label{remark: finkelberg icm}
Assume that $W=0$.  Then the category $\catya$ appears in \cite[Section 6.5]{F}.  Following the notation there, for any $\bi \in I^\bv$ we can associate a full flag $V = V^0 \supset V^1 \supset \ldots \supset V^N = 0$ where $N = |\bv| = \dim V$.  It is defined with respect to our basis $\basV_{i,r}$, using the order $<_\bi$:, $V^1$ is the span of all $\basV_{i,r}$ except the least $(i,r)$, $V^2$ is the span of all except the least two $(i,r)$, etc.    With the above conventions, one can verify that the algebra $\mathcal{H}_V = \bigoplus_{\bi,\bj \in I^\bv} {_\bj \mathcal{H}_\bi}$ from \cite{F} is equal to $\bigoplus_{\bi,\bj\in I^\bv} \Hom_{\catya}(\bi,\bj)$.  
\end{Remark}

The categories $\sB$ and  $\catya$ are closely related to the algebra $\ICB$ from (\ref{eq: ICB def}).  To see this, first fix any $\bi_{\bv} \in I^{\bv}$ such that $i\rightarrow j$ implies $(i, r) <_{\bi_\bv} (j,s)$ for all $r,s$. For example, by choosing a total order $i_1 < i_2 <\ldots$ on $I$ such that $i\rightarrow j$ imples $i<j$, we could define $\bi_\bv$ by
\begin{equation}
\label{eq: order and idempotent}
\bi_\bv = (\underbrace{i_1,\ldots, i_1}_{\bv_{i_1}\text{ times}}, \underbrace{i_2,\ldots,i_2}_{\bv_{i_2}\text{ times}},\ldots)
\end{equation}
\begin{Remark}
It is possible that no such total order exists, since $Q$ might have an oriented cycle. But by \cite[Section 6(viii)]{BFN1}, if we reorient an arrow of $Q$ then $\CB$ is unchanged up to isomorphism. Thus we can safely assume that $Q$ is acyclic, so that a total order does exist.
\end{Remark}

A similar element (denoted $\bi_\bm$) is considered in \cite[Section 4.5]{KTWWY2}, but with additional requirements in its definition since there the Dynkin diagram $I$ is assumed bipartite.  (We do not make this bipartiteness assumption here, as it is not relevant to our study.)

From the explicit description (\ref{eq: def of subspace for bi}) we see that  $\subrep{\bi_{\bv}} = \NO$.  Since $\Iwa_{\eta_\bi} =\Iwa$ for all $\bi\in I^\bv$, there is an equality $ \cR = \catR{\bi_\bv}{\bi_\bv}$ and it is $(\Iwa\times\bF)\rtimes \CC^\times$--equivariant. Since the convolution products are also defined in the same way, it follows that
\begin{Lemma}
\label{prop: ICB in CCB}
For any choice of $\bi_\bv$ as above, there is an equality of algebras $\ICB = \End_{\catya}( \bi_\bv ) $.
\end{Lemma}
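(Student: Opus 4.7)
The proof is essentially a matter of unwinding the definitions and verifying that all three ingredients (the underlying space, the group action, and the convolution product) coincide on both sides. I will organize the verification in three steps, corresponding to these three ingredients.

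\textbf{Step 1: Identify the spaces.} The first task is to check that $\catR{\bi_\bv}{\bi_\bv} = \cR$ as ind-schemes. Both are defined as $\{[g,s] \in \GK \times^H U : gs \in U\}$, so it suffices to check that $H = \Iwa_{\eta_{\bi_\bv}}$ equals $\Iwa$ and that $U = U_{\eta_{\bi_\bv}}$ equals $\NO$. The first equality is noted just before Lemma \ref{prop: ICB in CCB}: since our chosen $\eta_\bi$ satisfies $0 < \eta_{i,r} < 1$ with $\eta_{i,1} < \eta_{i,2} < \ldots < \eta_{i,\bv_i}$ for each $i$, weighing basis elements $\basV_{i,r}^\ast \otimes \basV_{i,s} z^n$ of $\bg((z))$ by $\eta + z\partial_z$ one sees that the non-negative eigenspaces span the Lie algebra of $\Iwa$ (\ref{eq: the Iwahori}). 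The second equality is where the specific choice of $\bi_\bv$ enters: the explicit description (\ref{eq: def of subspace for bi}) applied to $\bi = \bi_\bv$ kills the second summand (indexed by $(i,r) >_{\bi_\bv} (j,s)$ with $i \to j$), because by assumption $(i,r) <_{\bi_\bv} (j,s)$ holds whenever $i\to j$. What remains is exactly $\NO$.

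\textbf{Step 2: Match the equivariant structures.} The group $(\Iwa_{\eta_{\bi_\bv}} \times \bF) \rtimes \CC^\times$ acting on $\catR{\bi_\bv}{\bi_\bv}$ is literally $(\Iwa \times \bF) \rtimes \CC^\times$ acting on $\cR$ once the identifications of Step 1 are made, since the left $\Iwa$--action, the $\bF$--action, and the half--shifted loop--rotation action are all given by the same formulas in the two setups. Consequently the underlying Borel-Moore homology groups $\CCB{\bi_\bv}{\bi_\bv}$ and $\ICB$ agree as vector spaces (and as left modules over $H_{\bT \times \bF \times \CC^\times}^\ast(pt)$).

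\textbf{Step 3: Match the convolution products.} Finally, the convolution product on $\End_\catya(\bi_\bv) = \CCB{\bi_\bv}{\bi_\bv}$ is defined by the same recipe as the convolution on $\ICB$: one takes the three-step diagram from \cite[Section 3(iii)]{BFN1} with the common pair $(\Iwa_{\eta_{\bi_\bv}}, U_{\eta_{\bi_\bv}}) = (\Iwa, \NO)$ throughout, and so the two products agree termwise. Combining the three steps gives the equality of algebras.

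There is no real obstacle here beyond bookkeeping; the only step that uses a non-trivial property of $\bi_\bv$ is the identification $U_{\eta_{\bi_\bv}} = \NO$ in Step 1, and that in turn is a direct consequence of the defining property that $i \to j$ forces $(i,r) <_{\bi_\bv} (j,s)$ for all $r,s$. Everything else is a matter of observing that the general construction of $\sB$, when restricted to this particular object, specializes to the original construction of $\ICB$ from Section \ref{section: quantized Coulom branch algebras}.
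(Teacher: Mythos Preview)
Your proposal is correct and follows essentially the same approach as the paper: the paper's argument (given in the paragraph immediately preceding the lemma) is exactly your three-step verification --- check $U_{\eta_{\bi_\bv}} = \NO$ from (\ref{eq: def of subspace for bi}) and $\Iwa_{\eta_{\bi_\bv}} = \Iwa$, conclude $\cR = \catR{\bi_\bv}{\bi_\bv}$ equivariantly, and note that the convolution products are defined by the same recipe. You have simply fleshed out each of these observations in more detail than the paper does.
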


\subsection{Cylindrical diagrams and morphisms} 
\label{sec: Cylindrical diagrams and morphisms} 
For the category $\catya$, we can encode the paths from Section \ref{sec: presentation of cat} using the {\bf cylindrical KLR diagrams} from \cite[Section 4.4]{KTWWY2}.  Recall that a cylindrical KLR diagram $D$ consists of a finite number of curves in a cylinder $\RR / \ZZ \times [0, 1]$, locally of the form
$$
\begin{tikzpicture}
  \draw[very thick] (-2,0) +(-.5,-.5) -- +(.5,.5);
  \draw[very thick](-2,0) +(.5,-.5) -- +(-.5,.5);


 \draw[very thick](0,0) +(0,-.5) --  +(0,.5);

  \draw[very thick](2,0) +(0,-.5) --  node
  [midway,circle,fill=black,inner sep=2pt]{}
  +(0,.5);
\end{tikzpicture}
$$
which meet the ``bottom circle'' $t=0$ and ``top circle'' $t=1$ at distinct points with $x\neq 0$, where $(x,t)$ are coordinates on $ \RR / \ZZ \times [0,1]$.  Each curve carries a label $i \in I$.  See the left-hand side of (\ref{diagram: diagram}) for an example (which also depicts the coordinate axis $(x,t)$), and \cite{KTWWY2} for more details.

At the bottom of $D$, the sequence of labels of the curves defines a sequence $\operatorname{bot}(D) \in I^m$ in order of increasing $x$. Similarly we get $\operatorname{top}(D) \in I^m$.  Also, at the top or bottom of $D$, the list of $x$--coordinates of the curves encode a point of $\objs$, so that the coordinate of the $r$th curve labelled $i$ is $\eta_{i,r}$. {\em We will restrict our attention to those diagrams where $\operatorname{bot}(D), \operatorname{top}(D) \in I^\bv$, and such that the $x$--coordinates of the curves at the top and bottom are precisely given by $\eta_{\operatorname{bot}(D)}$ and $\eta_{\operatorname{top}(D)}$, respectively.}

\begin{equation}
\label{diagram: diagram}
\begin{tikzpicture}[baseline]

\draw[thick,->] (-4,0) +(-1,0) -- +(-1,0.5) node[at end, below left] {$t$};
\draw[thick,->] (-4,0) +(-1,0) -- +(-0.5,0) node[at end, below left] {$x$};
\draw[very thick, dashed] (-4,0) +(0,-1) -- +(0,1);
\draw[very thick, dashed] (-4,0) +(4,-1) -- + (4,1);
\draw[very thick] (-4,0) +(1,-1) -- +(2,1) ;
\draw[very thick] (-4,0) +(2,-1) to[out=110,in=-20] +(0,.1) ;
\draw[very thick] (-4,0) + (4,.1) to[out=160,in=-70] + (3,1) ;
\draw[very thick] (-4,0) +(3,-1) to[out=50,in=-160] +(4,-0.5);
\draw[very thick] (-4,0) +(0,-0.5) to[out=20,in=-100] +(1,1);

\draw (-4,0) + (1,-1.3) node {$i$};
\draw (-4,0) +(2,-1.3) node {$i$};
\draw (-4,0) +(3,-1.3) node {$j$};
\draw (-4,0) +(1,1.3) node {$j$};
\draw (-4,0) +(2,1.3) node {$i$};
\draw (-4,0) +(3,1.3) node {$i$};

\end{tikzpicture} \quad
\stackrel{\text{unroll}}{\relbar\joinrel\relbar\joinrel\longrightarrow} \quad 
\begin{tikzpicture}[baseline]

\draw[very thick] (-4,0) +(1,-1) -- +(2,1) ;
\draw[very thick] (-4,0) +(2,-1) to[out=110,in=-20] +(0,.1) ;
\draw[very thick] (-4,0) + (0,.1) to[out=160,in=-70] + (-1,1) ;
\draw[very thick] (-4,0) +(3,-1) to[out=50,in=-160] +(4,-0.5);
\draw[very thick] (-4,0) +(4,-0.5) to[out=20,in=-100] +(5,1);

\draw (-4,0) + (1,-1.3) node {$(i,1)$};
\draw (-4,0) +(2,-1.3) node {$(i,2)$};
\draw (-4,0) +(3,-1.3) node {$(j,1)$};
\draw (-4,0) +(5 	,1.3) node {$(j,1)$};
\draw (-4,0) +(-1,1.3) node {$(i,2)$};
\draw (-4,0) +(2,1.3) node {$(i,1)$};

\end{tikzpicture}
\end{equation}

For the time being, we focus on diagrams which carry no dots.  Looking at the bottom of $D$, consider the $r$th curve having label $i$. It may be lifted uniquely to a path $\pi_{i,r} : [0, 1] \rightarrow \RR$ in the universal cover of $\RR \rightarrow \RR/\ZZ$, with the property $\pi_{i,r}(0) = \eta_{i,r}$.  These lifts define an {\bf unrolled path} $\pi = (\pi_{i,r}) : [0,1] \rightarrow \paths$, associated to $D$.  

\begin{Lemma}
For $D, \pi $ as above, there exists a unique $w\in \widehat{W}$ such that $\pi(1) = w \cdot \eta_{\operatorname{top}(D)}$.
\end{Lemma}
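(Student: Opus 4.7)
The plan is to prove existence by extracting a permutation and winding numbers from the combinatorics of $D$ and its unrolled path $\pi$, and to prove uniqueness from the freeness of the $\widehat{\Sigma}$-action on $\objs$.

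For existence, I would begin by noting that the curves of $D$ are pairwise disjoint and color-preserving, so they induce a bijection between the strands emerging from the bottom and those entering the top, restricted to each color $i \in I$. With $\operatorname{bot}(D) \in I^\bv$ labelling the bottom and $\operatorname{top}(D) \in I^\bv$ the top, this bijection defines a permutation $\sigma_i \in \Sigma_{\bv_i}$ for each $i$: namely $\sigma_i(r)$ records which occurrence of the label $i$ in $\operatorname{top}(D)$ is reached by the $r$th bottom strand of color $i$. Set $\sigma = (\sigma_i)_{i\in I} \in \Sigma$.

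Next, because $\pi_{i,r}$ is by construction the unique lift to $\RR$ with $\pi_{i,r}(0) = \eta_{\operatorname{bot}(D),(i,r)}$, the endpoint $\pi_{i,r}(1)$ reduces modulo $\ZZ$ to the $x$-coordinate on the cylinder, which is $\eta_{\operatorname{top}(D),(i,\sigma_i(r))}$. Hence there is a unique integer $n_{i,r} \in \ZZ$ (the winding number of the $(i,r)$-strand around the cylinder) such that
$$
\pi_{i,r}(1) \;=\; \eta_{\operatorname{top}(D),(i,\sigma_i(r))} + n_{i,r}.
$$
From $\sigma \in \Sigma$ and $n = (n_{i,r}) \in \ZZ^{\bv}$ one then assembles the sought element $w = w_\pi \in \widehat{\Sigma} = \Sigma \ltimes \ZZ^{\bv}$, using the explicit conventions for the $\widehat{\Sigma}$-action on $\bt_\RR$ fixed earlier in the paper, so that $w \cdot \eta_{\operatorname{top}(D)} = \pi(1)$.

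For uniqueness, the key point is that $\widehat{\Sigma}$ acts freely on $\objs$. Indeed, any nontrivial reflection in $\widehat{\Sigma} = \prod_i \widehat{\Sigma}_{\bv_i}$ has fixed hyperplane of the form $\eV_{i,r} - \eV_{i,s} = n$ with $r \neq s$ and $n\in\ZZ$, and these hyperplanes are precisely of type \eqref{eq: hyper 1}, so they are excluded from $\objs$; meanwhile nontrivial translations $z^\laG$ act on $\bt_\RR$ without fixed points. Since $\eta_{\operatorname{top}(D)} \in \objs$ has trivial stabilizer, any two $w$ solving $w \cdot \eta_{\operatorname{top}(D)} = \pi(1)$ must agree.

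The main obstacle is purely bookkeeping: pinning down the $\widehat{\Sigma} = \Sigma \ltimes \ZZ^\bv$-action conventions so that the element $w$ built from $(\sigma, n)$ genuinely realizes the tuple $(\pi_{i,r}(1))$, rather than some off-by-inverse variant. Once the conventions are aligned with those in Section \ref{sec: notation} the verification reduces to a coordinate-by-coordinate identity, and no deeper difficulty arises.
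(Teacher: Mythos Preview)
The paper states this lemma without proof, so there is nothing to compare against; it is treated as evident from the combinatorics of unrolling. Your argument is correct and supplies the elementary justification the paper omits.

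One small refinement to your uniqueness step: you address affine reflections and pure translations separately, but a general nontrivial element $w = \sigma z^{\laG} \in \widehat{\Sigma}$ need be neither. The gap is easily filled. If $w \cdot \eta = \eta$ with $\sigma \neq 1$, choose $(i,r)$ with $\sigma^{-1}(i,r) = (i,s)$ and $s \neq r$; comparing the $(i,r)$-coordinates of $\sigma(\eta) - \eta = \sigma(\laG)$ forces $\eta_{i,s} - \eta_{i,r} \in \ZZ$, so $\eta$ lies on a hyperplane of type~\eqref{eq: hyper 1} and hence not in $\objs$. Combined with your observation that nontrivial translations have no fixed points, this shows the $\widehat{\Sigma}$-action on $\objs$ is free, and uniqueness follows.
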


So, from our diagram $D$ we get a pair $(\pi, w)$. Denote the corresponding morphisms
\begin{align*}
\mathbbm{r}_{D} & \DEF \mathbbm{r}_\pi \in \Hom_{\sB}(\eta_{\operatorname{bot}(D)}, w \cdot \eta_{\operatorname{top}(D)}), \\
y_D & \DEF y_w \in \Hom_{\sB} (\eta_{\operatorname{top}(D)}, w\cdot \eta_{\operatorname{top}(D)})
\end{align*}
Their composition gives a morphism in $\catya$: 
\begin{equation}
\morphism_D \DEF  y_D^{-1} \mathbbm{r}_D \in \Hom_{\catya}(\operatorname{bot}(D), \operatorname{top}(D) )
\end{equation}

\begin{Remark}
The reader may have noticed that we have secretly slightly generalized our notion of path, since cylindrical KLR diagrams might have several crossings at the same ``time'' $t$:
$$
\begin{tikzpicture}[scale=0.7]
\draw[very thick, dashed] (0,0) -- (0,2);
\draw[very thick, dashed] (6,0) -- (6,2);

\draw[very thick] (1,0) -- (3,2);
\draw[very thick] (2,0) -- (2,2);
\draw[very thick] (3,0) -- (5,2); 
\draw[very thick] (4,0) -- (4,2);
\draw[very thick] (5,0) to[out=60,in=-150] (6,1);
\draw[very thick] (0,1) to[out=30,in=-100] (1,2);

\draw (1,-0.3) node {$i$};
\draw (2,-0.3) node {$j$};
\draw (3,-0.3) node {$j$};
\draw (4,-0.3) node {$k$};
\draw (5,-0.3) node {$i$};

\draw (1,2.3) node {$i$};
\draw (2,2.3) node {$j$};
\draw (3,2.3) node {$i$};
\draw (4,2.3) node {$k$};
\draw (5,2.3) node {$j$};

\end{tikzpicture}
$$
Here, the coresponding path $\pi$ passes through an intersection of three hyperplanes.  Thankfully, the discussion below shows that we can isotope $D$, without changing the corresponding morphisms in $\mathscr{B}$.  So the reader may either accept these more general paths (which may pass through intersections of any number of hyperplanes, so long as the hyperplanes are in disjoint sets of variables), or else restrict to those $D$ where crossings never occur at the same height.
\end{Remark}

Cylindrical KLR diagrams $D', D''$ can be multiplied if $\operatorname{bot}(D') = \operatorname{top}(D'')$: we define $D' \cdot D''$ by stacking $D'$ on top of $D''$.   We may also consider isotopies of cylindrical KLR diagrams, with the extra requirements that (a) all isotopies fix the top and bottom circles, and (b) they do not create or break crossings, {\em including of the ``seam'' $x=0$}.

\begin{Proposition}
\label{lemma: where to put the Weyl group} For any cylindrical KLR diagram $D$ without dots,
\begin{enumerate}
\item[(a)]  $\mathbbm{r}_D$ and $y_D$ only depend on $D$ up to isotopy, defined as above.  In particular, $\morphism_D$ only depends on $D$ up to isotopy.

\item[(b)] If $D = D'\cdot D''$, then $\morphism_D = \morphism_{D'} \circ \morphism_{D''}$.

\end{enumerate}
\end{Proposition}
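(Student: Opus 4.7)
The plan is to handle (a) by reducing isotopy equivalence to a short list of elementary local moves and verifying each corresponds to a relation in Webster's presentation of $\sB$ from \cite[Theorem 3.10]{Web3}, then to deduce (b) from a direct computation of the unrolled paths using the equivariance relation (\ref{eq: paths and weyl group}).

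For part (a), I first separate the two factors in $\morphism_D = y_D^{-1}\mathbbm{r}_D$. The element $y_D = y_w$ only encodes how the endpoints of the strands on the top circle match those on the bottom: namely, $w \in \widehat{\Sigma}$ is the unique element realizing this matching, as in the lemma preceding the definition of $\morphism_D$. Since an isotopy fixes both circles and (by hypothesis) cannot break or create crossings with the seam, it preserves this matching, so $y_D$ is isotopy invariant. For $\mathbbm{r}_D = \mathbbm{r}_\pi$, any isotopy decomposes into a finite sequence of elementary moves: local isotopies that do not alter the combinatorial sequence of hyperplanes crossed by the unrolled path $\pi$; exchanges of the heights of two crossings whose supporting hyperplanes are transverse and in disjoint variables; Reidemeister-III-type moves at a triple coincidence of hyperplanes; and cancellations of an adjacent pair of crossings of the same hyperplane. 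For each move the resulting change in the factorization of $\mathbbm{r}_\pi$ is either trivial (commuting morphisms in disjoint variables) or is exactly one of the rank-two KLR-type relations of \cite[Theorem 3.10]{Web3}; the cancellation case uses the observation that a path which crosses a hyperplane and immediately recrosses it contributes the identity in $\sB$.

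For part (b), let $\pi', \pi''$ be the unrolled paths of $D', D''$ with associated Weyl elements $w', w''$, so that $\pi'(1) = w'\cdot \eta_{\operatorname{top}(D')}$ and $\pi''(1) = w''\cdot \eta_{\operatorname{top}(D'')}$. Since $D''$ is placed at the bottom of $D$, the unrolled path of $D$ begins with $\pi''$ and continues with the $w''$-translate of $\pi'$, whose starting point is $w''\cdot\eta_{\operatorname{bot}(D')} = \pi''(1)$ by (\ref{eq: Weyl invariance}) and the compatibility $\operatorname{top}(D'') = \operatorname{bot}(D')$. Thus the concatenated path is $\pi = \pi''\cdot (w''\cdot \pi')$, the total Weyl element is $w = w''w'$, and hence $\mathbbm{r}_D = \mathbbm{r}_{w''\cdot\pi'}\mathbbm{r}_{\pi''}$ while $y_D = y_{w''}y_{w'}$. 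Applying (\ref{eq: paths and weyl group}) to rewrite $\mathbbm{r}_{w''\cdot\pi'} = y_{w''}\mathbbm{r}_{\pi'}y_{w''}^{-1}$ and simplifying,
\[
\morphism_D = y_{w'}^{-1} y_{w''}^{-1} \cdot y_{w''}\mathbbm{r}_{\pi'}y_{w''}^{-1} \cdot \mathbbm{r}_{\pi''} = (y_{w'}^{-1}\mathbbm{r}_{\pi'})(y_{w''}^{-1}\mathbbm{r}_{\pi''}) = \morphism_{D'}\circ \morphism_{D''}.
\]

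The main obstacle is the bookkeeping in (a): one must carefully enumerate all elementary isotopy moves near multi-hyperplane intersections and match each to a specific relation in Webster's presentation, in particular the ones allowing multiple simultaneous crossings as noted in the remark preceding the proposition. Seam-adjacent isotopies have the potential to obscure the picture, but by the argument above $y_D$ is manifestly isotopy invariant, and (\ref{eq: paths and weyl group}) ensures that any ambient translate by $\widehat{\Sigma}$ is absorbed into a conjugate path whose relations are already subsumed by the elementary moves verified in $\sB$.
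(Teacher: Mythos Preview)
Your argument for (b) is correct and fills in the computation that the paper leaves as a one-line citation of (\ref{eq: paths and weyl group}).

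For (a), however, you have misread the isotopy conditions and this leads to a genuine error. The paper's isotopies are required to stay within the class of cylindrical KLR diagrams and are explicitly forbidden from creating or breaking crossings (including with the seam). This rules out your ``cancellation'' move entirely: a pair of adjacent crossings of the same hyperplane cannot be removed. It also rules out Reidemeister~III, since such a move forces a triple point at some instant, which is not an allowed local configuration. So two of your four elementary moves simply do not occur.

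More seriously, your claimed analysis of the cancellation move is false. A path that crosses a hyperplane and immediately recrosses it does \emph{not} contribute the identity in $\sB$. For a weight hyperplane the composite $r(\eta^-,\eta^+)\,r(\eta^+,\eta^-)$ is multiplication by a nonconstant linear polynomial (this is precisely the KLR relation $\psi_k^2 = \pm(z_k - z_{k+1})$ when $i_k \sim i_{k+1}$), and for a root hyperplane the composite involves a squared divided difference, which vanishes. Had these moves been allowed, the proposition would in fact be false. Similarly, Reidemeister~III on three mutually crossing strands is not an identity in $\sB$: in general $\psi_k\psi_{k+1}\psi_k$ and $\psi_{k+1}\psi_k\psi_{k+1}$ differ by a correction term.

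The paper's proof is shorter precisely because it recognizes that under the stated isotopy constraints, the only nontrivial elementary move is a height exchange of two crossings on \emph{disjoint} strands (two crossings sharing a strand cannot pass each other without forming a triple point). That single case is what you correctly handle as ``commuting morphisms in disjoint variables'', and the paper dispatches it by observing that in the faithful polynomial representation the two operators act on disjoint sets of variables. If you strip out the Reidemeister~II/III cases, your argument for (a) collapses to exactly this.
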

\begin{proof}
Both parts are consequences of \cite[Theorem 3.10]{Web3}.  In particular part (b) follows from (\ref{eq: paths and weyl group}).  

Consider then part (a).  The claim about $y_D$ is well-kwown, and follows from the relations of $\widehat{\Sigma}$.  For $\mathbbm{r}_D$, first suppose that $D$ has no crossings at the same time $t$, and consider an isotopy $D\rightarrow D'$ which preserves this property.  Then the corresponding paths pass through precisely the same sequence of cells and hyperplanes in $\paths$, so by the definition (\ref{eq: def of r from hyperplane sequence} we have that $\mathbbm{r}_D = \mathbbm{r}_{D'}$.  It remains to check that subdiagrams can ``slide'' past each other in time, if they are formed by disjoint sets of strands:
\begin{equation}
\label{eq: sldes are fine}
\begin{tikzpicture}[baseline,scale=0.8]
\draw[very thick] (0,-1.1)+(-0.3,0.5) -- +(1.3,0.5) -- +(1.3,1) -- +(-0.3,1) -- +(-0.3,0.5);
\draw[very thick] (0,-1.1) +(0,0) -- +(0,0.5);
\draw[very thick] (0,-1.1) +(1,0) -- +(1,0.5);
\draw[very thick] (0,-1.1) +(1,1) -- +(1,2.3);
\draw[very thick] (0,-1.1) +(0,1) -- +(0,2.3);
\draw (0,-1.1) +(0.5,0.2) node {$\cdots$};
\draw (0,-1.1) +(0.5,1.6) node {$\cdots$};

\draw[very thick] (2,-0.3)+(-0.3,0.5) -- +(1.3,0.5) -- +(1.3,1) -- +(-0.3,1) -- +(-0.3,0.5);
\draw[very thick] (2,-0.3)+(0,1) -- +(0,1.5);
\draw[very thick] (2,-0.3)+(1,1) -- +(1,1.5);
\draw[very thick] (2,-0.3)+(0,-0.8) -- +(0,0.5);
\draw[very thick] (2,-0.3)+(1,-0.8) -- +(1,0.5);
\draw[very thick] (2,-0.3) + (0.5,1.2) node {$\cdots$};
\draw[very thick] (2,-0.3) + (0.5,-0.2) node {$\cdots$};
\end{tikzpicture}
\quad = \quad
\begin{tikzpicture}[baseline,scale=0.8]
\draw[very thick] (2,-1.1)+(-0.3,0.5) -- +(1.3,0.5) -- +(1.3,1) -- +(-0.3,1) -- +(-0.3,0.5);
\draw[very thick] (2,-1.1) +(0,0) -- +(0,0.5);
\draw[very thick] (2,-1.1) +(1,0) -- +(1,0.5);
\draw[very thick] (2,-1.1) +(1,1) -- +(1,2.3);
\draw[very thick] (2,-1.1) +(0,1) -- +(0,2.3);
\draw (2,-1.1) +(0.5,0.2) node {$\cdots$};
\draw (2,-1.1) +(0.5,1.6) node {$\cdots$};

\draw[very thick] (0,-0.3)+(-0.3,0.5) -- +(1.3,0.5) -- +(1.3,1) -- +(-0.3,1) -- +(-0.3,0.5);
\draw[very thick] (0,-0.3)+(0,1) -- +(0,1.5);
\draw[very thick] (0,-0.3)+(1,1) -- +(1,1.5);
\draw[very thick] (0,-0.3)+(0,-0.8) -- +(0,0.5);
\draw[very thick] (0,-0.3)+(1,-0.8) -- +(1,0.5);
\draw[very thick] (0,-0.3) + (0.5,1.2) node {$\cdots$};
\draw[very thick] (0,-0.3) + (0.5,-0.2) node {$\cdots$};
\end{tikzpicture}
\quad \stackrel{def}{=} \quad
\begin{tikzpicture}[baseline,scale=0.8]
\draw[very thick] (2,-1.1)+(-0.3,1) -- +(1.3,1) -- +(1.3,1.5) -- +(-0.3,1.5) -- +(-0.3,1);
\draw[very thick] (2,-1.1) +(0,0) -- +(0,1);
\draw[very thick] (2,-1.1) +(1,0) -- +(1,1);
\draw[very thick] (2,-1.1) +(1,1.5) -- +(1,2.3);
\draw[very thick] (2,-1.1) +(0,1.5) -- +(0,2.3);
\draw (2,-1.1) +(0.5,0.5) node {$\cdots$};
\draw (2,-1.1) +(0.5,1.9) node {$\cdots$};

\draw[very thick] (0,-1.1)+(-0.3,1) -- +(1.3,1) -- +(1.3,1.5) -- +(-0.3,1.5) -- +(-0.3,1);
\draw[very thick] (0,-1.1) +(0,0) -- +(0,1);
\draw[very thick] (0,-1.1) +(1,0) -- +(1,1);
\draw[very thick] (0,-1.1) +(1,1.5) -- +(1,2.3);
\draw[very thick] (0,-1.1) +(0,1.5) -- +(0,2.3);
\draw (0,-1.1) +(0.5,0.5) node {$\cdots$};
\draw (0,-1.1) +(0.5,1.9) node {$\cdots$};
\end{tikzpicture}
\end{equation}
This is a consequence of \cite[Theorem 3.10]{Web3}, for a very simple reason: in the faithful polynomial representation on $\sB$ given there, crossings correspond to divided difference operators, difference operators, or multiplication by polynomials.  Moreover, the set of variables on which a crossing acts is encoded by its strands.  So in particular, the operations corresponding to our two subdiagrams take place in distinct sets of variables, and therefore commute.
\end{proof}

The simplest diagrams are those with a single crossing, either between two strands or between a strand and the ``seam'' of $\RR/\ZZ$.  Following the notation of \cite[Section 4.4]{KTWWY2}, we denote these by
\begin{equation*}
\psi_k(\bi)=\,\begin{tikzpicture}[baseline,scale=1.2]
      \draw[very thick, dashed] (4,0) +(0,-.5) -- +(0,.5);
 \draw[very thick] (4.3,0) +(0,-.5) -- +(0,.5);
  \draw[very thick] (6,0) +(0,-.5) -- +(0,.5);
  \draw[very thick, dashed] (6.3,0) +(0,-.5) -- +(0,.5);
  \draw (4.3,0) +(0.1,-.8) node {\small$i_1$};
  \draw (6,0) +(0.1,-.8) node {\small$i_{|\bv|}$};
  \draw[very thick] (4.9,0) +(0,-.5) -- +(.5,.5);
  \draw[very thick] (5.4,0) +(0,-.5) -- +(-.5,.5);
\draw (4.9,0) +(0.1,-.8) node {\small$i_k$};
\draw (5.4,0) +(0.1,-.8) node {\small$i_{k+1}$};
\draw (4.75,0) node {$\cdots$};
\draw (5.72,0) node {$\cdots$};
\end{tikzpicture} \qquad \qquad  
\sigma_+(\bi)=\,\begin{tikzpicture}[baseline,scale=1.2]
      \draw[very thick, dashed] (4,0) +(0,-.5) -- +(0,.5);
 \draw[very thick] (4.3,0) +(0,.5) -- +(-.3,0);
  \draw[very thick] (6,0) +(.3,0) -- node[at end, below] {\small$i_{|\bv|}$}+(0,-.5);
  \draw[very thick] (4.3,-.5) --node[at start, below] {\small$i_1$} (4.9,.5);
 \draw[very thick] (5.4,-.5) -- node[at start, below] {\small$i_{|\bv|-1}$} (6,.5);
  \draw[very thick, dashed] (6.3,0) +(0,-.5) -- +(0,.5);
\draw (5.15,0) node {$\cdots$};
    \end{tikzpicture} \qquad \qquad 
\sigma_-(\bi)=\,\begin{tikzpicture}[baseline,scale=1.2]
      \draw[very thick, dashed] (4,0) +(0,-.5) -- +(0,.5);
 \draw[very thick] (4.3,0) +(0,-.5) -- node[at start, below] {\small$i_1$} +(-.3,0);
  \draw[very thick] (6,0) +(.3,0) -- +(0,.5);
 \draw[very thick] (4.3,.5) --node[at end, below] {\small$i_2$} (4.9,-.5);
 \draw[very thick] (5.4,.5) --node[at end, below] {\small$i_{|\bv|}$} (6,-.5);
  \draw[very thick, dashed] (6.3,0) +(0,-.5) -- +(0,.5);
\draw (5.15,0) node {$\cdots$};
\end{tikzpicture}
\end{equation*}
The proposition shows that any $\morphism_D$ (with no dots) can be decomposed as a product of $\morphism$'s corresponding to these simpler diagrams.

We have thus far neglected the dots that a diagram $D$ might carry.  Consider the diagrams with straight vertical lines: 
$$
e(\bi)=\,\begin{tikzpicture}[baseline,scale=1.2]
\draw[very thick, dashed] (-3.7,0) +(0,-.5) -- +(0,.5);
 \draw[very thick] (-3.4,0) +(0,-.5) -- +(0,.5);
  \draw[very thick] (-2,0) +(0,-.5) -- +(0,.5);
  \draw[very thick, dashed] (-1.7,0) +(0,-.5) -- +(0,.5);
  \draw (-3.4,0) +(0.1,-.8) node {\small$i_1$};
  \draw (-2,0) +(0.1,-.8) node {\small$i_{|\bv|}$};
  \draw (-2.65,0) node {$\cdots$};  \end{tikzpicture}
\qquad \qquad   
\yz_k(\bi)=\,\begin{tikzpicture}[baseline,scale=1.2]

\draw[very thick, dashed] (0,0) +(0,-.5) -- +(0,.5);
 \draw[very thick] (.3,0) +(0,-.5) -- +(0,.5);
  \draw[very thick] (2,0) +(0,-.5) -- +(0,.5);
  \draw[very thick, dashed] (2.3,0) +(0,-.5) -- +(0,.5);
  \draw (.3,0) +(0.1,-.8) node {\small$i_1$};
  \draw (2,0) +(0.1,-.8) node {\small$i_{|\bv|}$};
  \draw (.75,0) node {$\cdots$};
    \draw[very thick] (1.2,0) +(0,-.5) -- node [midway,circle,fill=black,inner sep=2pt]{} +(0,.5);
     \draw (1.65,0) node {$\cdots$};
      \draw (1.2,0) +(0.1,-.8) node {\small$i_k$};  
\end{tikzpicture}
$$
To these diagrams, we associate the morphisms $1$ and $\eV_{i,r}$ in $H_{\bT}^\ast(pt) \subset \End_{\catya}(\bi)$, respectively, where as usual $(i,r)$ is determined by $i_k$ from $\bi$.  Appealing to \cite[Theorem 3.10]{Web3} again, we claim that these morphisms again isotope as expected, both with each other and with the $\morphism_D$ defined above.  We can therefore define 
\begin{equation}
\morphism_D \in \Hom_{\catya}(\operatorname{bot}(D), \operatorname{top}(D))
\end{equation}
for an arbitrary cylindrical KLR diagram $D$, and this is invariant under isotopy of $D$.

\subsection{Comparison with the algebra \texorpdfstring{\protect\Ya}{Ya}} 
\label{section: finale}

The {\bf KLR Yangian algebra} $\Ya$ is defined in \cite[Section 4.4]{KTWWY2}, as a sort of KLR algebra on a cylinder.  $\Ya$ is spanned by cylindrical KLR diagrams up to isotopy, modulo certain relations, and multiplication is given by stacking of diagrams.  For any $\bi \in I^n$ there is an idempotent $\idem(\bi) \in \Ya$, which is the diagram with straight vertical lines as above.  Note that in $\Ya$ the number of strands can be arbitrary, but we will restrict our attention to those diagrams with strands labelled by $I^\bv$.

To be more precise, in this paper we will consider $\Ya$ as an algebra over $H_{\bF \times \CC^\times}^\ast(pt) = \CC[\hbar,z_1,\ldots,z_N]$.  Thus we make the following changes in the definition:
\begin{enumerate}
\item[(a)] Homogenize relations \cite[(4.8a) and (4.8c)]{KTWWY2} by replacing the factors of $2$ by $\hbar$.

\item[(b)] Define $\overline{X}_{ij}(u,v) = u-v-\tfrac{1}{2}\hbar$ when $i\leftarrow j$.

\item[(c)] Let the roots of $p_i(u)$ be the generators of the polynomial ring $H_{\bF_i}^\ast(pt)$, so that $ p_i(u) = \prod_{k: i_k = i} (u-z_{k}) $.
   Also, define $p_{i,+}(u) = p_i(u+\hbar)$.
\end{enumerate}
We recover the algebra from \cite{KTWWY2} by taking $\hbar =2$, and specializing $z_1,\ldots,z_N$ to complex numbers.  The latter can be encoded by an $I$--tuple of multisets, denoted $\mathbf{R}$ in \cite{KTWWY2}.  

\begin{Theorem}
The map $D \mapsto \morphism_D$ defines an isomorphism of $H_{\bF\times \CC^\times}^\ast(pt)$--algebras
$$
\bigoplus_{\bi,\bj\in I^\bv} \idem(\bj) \Ya \idem(\bi)  \stackrel{\sim}{\longrightarrow} \bigoplus_{\bi, \bj \in I^\bv} \Hom_{\catya}(\bi, \bj)
$$
\end{Theorem}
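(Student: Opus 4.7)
The plan is to realize both sides via the same elementary generators (single--hyperplane crossings, dots, and seam shifts) and compare, using Webster's presentation \cite[Theorem 3.10]{Web3} of $\sB$ as the bridge between $\Ya$ and $\catya$.

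First I would establish that $D\mapsto \morphism_D$ descends to a well--defined $H_{\bF\times\CC^\times}^\ast(pt)$--algebra map. Isotopy invariance and compatibility with stacking are already furnished by Proposition \ref{lemma: where to put the Weyl group}. What remains is to verify that each defining relation of $\Ya$ holds among the $\morphism_D$. The relations of $\Ya$ are local: the quadratic/triangular KLR relations among the $\psi_k(\bi)$, the dot--through--crossing slides involving $\yz_k(\bi)$, and the cylindrical relations involving $\sigma_\pm(\bi)$ with polynomial coefficients $p_i(u)$ and $p_{i,+}(u)=p_i(u+\hbar)$. By Section \ref{sec: Cylindrical diagrams and morphisms}, each such local piece corresponds either to an elementary $\mathbbm{r}_H$ for a single hyperplane crossing, to a dot given by multiplication by some $\eV_{i,r}$, or to the minimal affine translation $y_{z^{\pm\eps_{i,r}}}$ for a seam crossing. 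Webster's theorem gives a complete set of relations among these generators, so the task reduces to matching polynomial coefficients. The $\tfrac{1}{2}\hbar$ shifts come from the twisted $\CC^\times$ action on $\bN$ of weight $1/2$ and reproduce $\overline{X}_{ij}(u,v)$; the factor $p_i(u)=\prod_{k:i_k=i}(u-z_k)$ is the $\bF$--equivariant Euler class of the $\Hom(W_i,V_i)$--contribution picked up as a strand traverses the seam.

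Next I would argue surjectivity. By the homomorphism property it suffices to hit $e(\bi),\yz_k(\bi),\psi_k(\bi),\sigma_\pm(\bi)$ together with the scalars $z_k$. These are, by construction, the images of the corresponding elementary generators of $\catya$: identities map to $e(\bi)$, polynomial dressings by $\eV_{i,r}$ to products of $\yz_k(\bi)$'s, single--hyperplane crossings (for both roots of $\bG$ and weights of $\bN$) to $\psi_k(\bi)$, and the atomic affine translations $y_{z^{\pm\eps_{i,r}}}$ to $\sigma_\pm(\bi)$. Since any path in $\paths$ factors as a sequence of single--hyperplane crossings and any $w\in\widehat{\Sigma}$ factors as a product of simple reflections and elementary translations, Webster's generation statement immediately yields that every morphism in $\Hom_{\catya}(\bi,\bj)$ lies in the image.

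For injectivity I would match bases: \cite[Corollary 3.12]{Web3} gives an $H_{\bT\times\bF\times\CC^\times}^\ast(pt)$--basis for each $\Hom_\sB(\eta,\eta')$ indexed by a choice of reduced expression for the minimal affine Weyl element and a polynomial dressing, and \cite[Section 4.4]{KTWWY2} furnishes a completely parallel basis of $\idem(\bj)\Ya\idem(\bi)$ by cylindrical KLR diagrams in a chosen reduced form with dots collected on a distinguished strand. Under $D\mapsto\morphism_D$ these correspond term--by--term. The hard part, I expect, is the relation--matching in the first paragraph, especially the seam relations: one must confirm that a single minimal cylindrical shift produces precisely $p_i(u)$, i.e.\ that the Euler class of the one--dimensional weight space of $\bN$ that moves into or out of $U_\eta$ under that shift reproduces exactly $\prod_{k:i_k=i}(u-z_k)$ (and its $\hbar$--shifted version $p_{i,+}$ on the opposite side of the seam). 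Once those coefficients are pinned down, the remaining KLR relations reduce to local computations of degree one already implicit in Webster's work.
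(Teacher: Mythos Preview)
Your proposal is correct, and your basis-matching argument for the isomorphism (via \cite[Corollary 3.12]{Web3} and the basis of $\idem(\bj)\Ya\idem(\bi)$ established in the proof of \cite[Theorem 4.17]{KTWWY2}) is exactly what the paper does. The difference lies in how you establish well-definedness. You propose to verify each defining relation of $\Ya$ directly inside $\catya$ using Webster's presentation, tracking Euler classes and $\tfrac{1}{2}\hbar$ shifts by hand; you correctly flag the seam relations as the delicate point. The paper bypasses this entirely: both $\Ya$ and $\catya$ carry faithful polynomial representations (by \cite[Theorem 4.17]{KTWWY2} and \cite[Theorem 3.10]{Web3} respectively), and one simply checks that the generators $\psi_k(\bi),\sigma_\pm(\bi),\idem(\bi),\yz_k(\bi)$ act by the same operators on polynomials in both pictures. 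Faithfulness of the $\catya$ representation then forces every $\Ya$ relation to hold in $\catya$, with no case-by-case work on the cylindrical relations. Your route would work but is considerably more laborious; the polynomial-representation shortcut is the cleaner argument. Note also that your separate surjectivity paragraph is redundant: once you know the map sends a basis bijectively to a basis, both injectivity and surjectivity follow at once.
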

\begin{proof}
We can verify that this map is well-defined by comparing the polynomial representations of $\Ya$ and $\sB$, defined in \cite[Theorem 4.17]{KTWWY2} and \cite[Theorem 3.10]{Web3}, respectively.  Note that in the conventions of \cite{KTWWY2}, for each $\bi$ we should identify 
$$
H_\bT^\ast(pt) = \CC	[\eV_{i,r} : i\in I, 1\leq r\leq \bv_i]  \ \cong \ \CC[Z_1(\bi), \ldots, Z_{|\bv|}(\bi) ]
$$
where $Z_1(\bi),\ldots,Z_{|\bv|}(\bi)$ map to the elements $\eV_{i,r}$ in increasing order under $<_\bi$.  Accounting for this convention change, we easily check that the action of the elements $\phi_k(\bi), \sigma_\pm(\bi), \idem(\bi)$ and $z_k(\bi)$ is the same whether thought of in $\Ya$ or in $\catya$.  This shows that the homomorphism is well-defined.

To show that this map is an isomorphism, we look at bases. For each $w\in \widehat{\Sigma}$, we fix a minimal length path $\pi$ from $\eta_\bi$ to $w \eta_\bj$, and consider the morphism $y_w^{-1} \mathbbm{r}_\pi \in \Hom_{\catya}(\bi, \bj)$.   On the one hand, by \cite[Corollary 3.12]{Web3} these elements (taken over all $w\in \widehat{\Sigma}$) give a basis for $\Hom_{\catya}(\bi, \bj)$ as a left (or right) module over $H_{\bT\times\bF\times\CC^\times}^\ast(pt)$.  On the other hand, the path $\pi$ can be ``rolled'' into a diagram $D$, and Lemma \ref{lemma: where to put the Weyl group} shows that $ \morphism_D = y_w^{-1} \mathbbm{r}_\pi$. These same diagrams $D$ (over all $w\in\widehat{\Sigma}$) form a basis for $\idem(\bj) \Ya \idem(\bi)$, as shown during  the proof of \cite[Theorem 4.17]{KTWWY2}.  By the definition of our map, this basis for $\idem(\bj) \Ya \idem(\bi)$ maps bijectively onto the above basis for $\Hom_{\catya}(\bi,\bj)$, and therefore the map is an isomorphism.
\end{proof}

In \cite[Theorem 4.19]{KTWWY2}, it was proven that there is a map $Y_\mu^\lambda \rightarrow \idemCB \Ya_{\hbar = 1} \idemCB$, and claimed that this map is an isomorphism. We can complete the proof of this result:
\begin{Corollary}
The map $Y_\mu^\lambda \stackrel{\sim}{\rightarrow } \idemCB \Ya_{\hbar = 1} \idemCB$ is an isomorphism.  In finite ADE type, this upgrades to an isomorphism $\mathbf{Y}_\mu^\lambda \stackrel{\sim}{\rightarrow} \idem \Ya \idem$.
\end{Corollary}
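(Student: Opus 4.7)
The plan is to combine the isomorphism just proved, Theorem A, and the matrix-algebra structure from Section \ref{sec: CB and ICB}, and then verify that the resulting composite agrees with the map constructed in \cite[Theorem 4.19]{KTWWY2}.

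First I would restrict the isomorphism $\bigoplus_{\bi,\bj\in I^\bv} \idem(\bj) \Ya_{\hbar=1} \idem(\bi) \stackrel{\sim}{\to} \bigoplus_{\bi,\bj} \Hom_{\catya}(\bi,\bj)$ to the single summand $(\bi_\bv,\bi_\bv)$, which by Lemma \ref{prop: ICB in CCB} yields an isomorphism of algebras
\[
\idem(\bi_\bv)\,\Ya_{\hbar=1}\,\idem(\bi_\bv) \ \stackrel{\sim}{\longrightarrow}\ \End_{\catya}(\bi_\bv) = \ICBone.
\]
Since $\idemCB$ is a primitive sub-idempotent of $\idem(\bi_\bv)$ inside $\Ya_{\hbar=1}$, cutting down by $\idemCB$ on both sides gives an isomorphism $\idemCB\,\Ya_{\hbar=1}\,\idemCB \ \stackrel{\sim}{\to}\ \idemCB \ICBone \idemCB$. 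By Theorem \ref{thm: ICB matrix algebra over CB}(a), the right-hand side is canonically identified with $\CBone$ via $a\mapsto \Inc \ast a \ast \Av$, and Theorem \ref{thm A}(a) (proved in Section \ref{section: relationship to shifted Yangians}) identifies the latter with $Y_\mu^\lambda$. Composing these isomorphisms in reverse produces an isomorphism $Y_\mu^\lambda \stackrel{\sim}{\to} \idemCB\,\Ya_{\hbar=1}\,\idemCB$.

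The main point to check, and the step I expect to require the most care, is that this composite agrees with the algebra homomorphism $Y_\mu^\lambda \to \idemCB\,\Ya_{\hbar=1}\,\idemCB$ constructed in \cite[Theorem 4.19]{KTWWY2}. I would verify this on the generators $A_i^{(r)}$, $E_i^{(1)}$, $F_i^{(1)}$, $z_k$ of $Y_\mu[z_1,\ldots,z_N]$, by comparing their images in two ways: on one side via the explicit formulas (\ref{eq: Y to CB 1})--(\ref{eq: Y to CB 3}) defining $Y_\mu^\lambda \to \CBone$ followed by $\CBone \hookrightarrow \ICBone \cong \idem(\bi_\bv)\Ya_{\hbar=1}\idem(\bi_\bv)$; on the other, via the formulas from \cite[Theorem 4.19]{KTWWY2}. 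Both can be checked against the action on the faithful polynomial representation $P = H_{\bT\times\bF}^\ast(pt)$, using that the $Y_\mu^\lambda$--action is intertwined with the $\CBone$--action by the embedding $P^\Sigma \hookrightarrow P$ dictated by $\Inc,\Av$. Since the generators $A_i^{(r)}$ act by multiplication by symmetric polynomials (hence commute with $\idemCB$) and $E_i^{(1)},F_i^{(1)}$ are the $\pm\varpi_{i,1}$--minuscule monopoles whose cylindrical KLR diagrammatic expressions are computed in \cite[\S 4.4]{KTWWY2}, this comparison reduces to a finite check matching with Lemma \ref{lemma: minuscule monopole reexpressed}.

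For the finite ADE statement, I would run the same argument in the graded setting. The isomorphism in part (a) of the preceding theorem is an isomorphism of graded $H_{\bF\times\CC^\times}^\ast(pt) = \CC[\hbar,z_1,\ldots,z_N]$--algebras (both sides carry compatible gradings by homological degree, respectively by number of crossings/dots), so cutting by $\idem(\bi_\bv)$ and then by $\idemCB$ gives an isomorphism of graded $\CC[\hbar,z_1,\ldots,z_N]$--algebras $\idemCB\,\Ya\,\idemCB \cong \idemCB\,\ICB\,\idemCB$. Combining with the matrix algebra identification $\idemCB\,\ICB\,\idemCB \cong \CB$ of Theorem \ref{thm: ICB matrix algebra over CB}(a) (which works $\CC[\hbar]$--linearly) and the isomorphism $\mathbf{Y}_\mu^\lambda \cong \CB$ provided by Theorem \ref{thm: finite type surjectivity} (the Rees construction being compatible with the homological grading on $\CB$ by the discussion preceding that theorem), we obtain $\mathbf{Y}_\mu^\lambda \stackrel{\sim}{\to} \idemCB\,\Ya\,\idemCB$. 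The agreement with the map of \cite[Theorem 4.19]{KTWWY2} follows from the $\hbar=1$ case together with $\CC[\hbar]$--flatness of both sides.
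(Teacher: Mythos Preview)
Your proposal is correct and follows essentially the same approach as the paper: both assemble the chain of isomorphisms $Y_\mu^\lambda \cong \CBone \cong \idemCB\ICBone\idemCB \cong \idemCB\End_{\catya}(\bi_\bv)_{\hbar=1}\idemCB \cong \idemCB\Ya_{\hbar=1}\idemCB$ (the paper phrases this as a commutative square), and then verify agreement with the map of \cite[Theorem 4.19]{KTWWY2} on Yangian generators by invoking Lemma \ref{lemma: minuscule monopole reexpressed} to rewrite the minuscule monopole operators as $\partial_{w_0 w_\laG} f r_\laG \idemCB$ and matching this against the cylindrical KLR diagram. Your treatment of the finite ADE upgrade is slightly more explicit than the paper's, which simply notes that the same argument applies over $\CC[\hbar]$.
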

\begin{proof}
We claim that there is a commutative diagram
\begin{equation}
\label{eq: final CD}
\begin{tikzcd}
Y_\mu^\lambda \ar[d,"\sim" labl] \ar[r] & \idemCB \Ya_{\hbar=1} \idemCB \ar[d,"\sim" labl] \\
\CBone \ar[r,"\sim"] &  \idemCB \End_{\catya}(\bi_\bv)_{\hbar = 1} \idemCB
\end{tikzcd}
\end{equation}
The top arrow is \cite[Theorem 4.19]{KTWWY2}, the left side isomorphism is from Section \ref{section: relationship to shifted Yangians}, and the right side isomorphism is from the previous theorem.  Finally, the bottom comes from the isomorphism $\CB \cong \idemCB \ICB \idemCB$ of Theorem \ref{thm: ICB matrix algebra over CB}, plus the equality $\ICB = \End_{\catya}(\bi_\bv)$ of Lemma \ref{prop: ICB in CCB}.  

To check commutativity, we can argue directly in terms of the images of the generators of the Yangian.  The element $F_i^{(r)} \in Y_\mu^\lambda$ maps to the dressed monopole operator $M_{\varpi_{i,1}, f} \in \CBone$ where $f=z_1^{r-1}$.  By Lemma \ref{lemma: minuscule monopole reexpressed}, this monopole operator maps to the endomorphism 
$$ 
\partial_{i, \bv_i} \cdots \partial_{i, 2} z_1^{r-1} r_\laG \idemCB \ \in \ \idemCB \ICB_{\hbar=1} \idemCB = \idemCB \End_{\catya}(\bi_\bv)_{\hbar=1} \idemCB
$$
Indeed, the stabilizer $W_\laG = W_{\varpi_{i,1}}$ of $(1,0,\ldots,0)$ is the symmetric group on $\{2,\ldots,\bv_i\}$, and it is easy to see that $w_0 w_\laG = s_{i,\bv_i}\cdots s_{i,1}$.  As argued in the proof of \cite[Theorem 4.19]{KTWWY2}, this endomorphism corresponds to a cylindrical KLR diagram encoding the image of $F_i^{(r)}$ in $\Ya$, which is more or less (ignoring dots, the idempotent $\idemCB$, and a sign)
\begin{equation*}
\begin{tikzpicture}

\node at (-3,-0.5)  {$\cdots $};
\node at (0.6,-0.5) {$\cdots $};
\node at (-1,-0.5) {$\cdots$};
\draw[very thick, dashed] (-4,0) +(0,-1) -- +(0,1);
\draw[very thick, dashed] (-4,0) +(5.5,-1) -- + (5.5,1);

\draw[very thick] (-4,0) +(0.5,-1) -- +(0.5,1);
\draw[very thick] (-4,0) +(1.4,-1) -- +(1.4,1);

\draw[very thick] (-4,0) +(2.5,-1) -- +(2.5,1);
\draw[very thick] (-4,0) + (3.5,-1) -- +(3.5,1);
\draw[very thick] (-4,0) +(4.1,-1) -- +(4.1,1);
\draw[very thick] (-4,0) +(5,-1) -- +(5,1);

\draw[very thick] (-4,0) +(2.2,-1) .. controls +(-0.2,1) .. + (0,0);

\draw[very thick] (-4,0) +(5.5,0) .. controls +(-3.3,0) .. + (2.2,1);

\draw [decorate,decoration={brace,amplitude=5pt}] (-4,0) +(1.4, -1.1) -- +(0.5,-1.1)  node [black,midway,below,yshift=-2pt] {\footnotesize $<i$};
\draw [decorate,decoration={brace,amplitude=5pt}] (-4,0) +(3.5, -1.1) -- +(2.2,-1.1)  node [black,midway,below,yshift=-2pt] {\footnotesize $i$};
\draw [decorate,decoration={brace,amplitude=5pt}] (-4,0) +(5, -1.1) -- +(4.1,-1.1)  node [black,midway,below,yshift=-2pt] {\footnotesize $>i$};
\end{tikzpicture}
\end{equation*}
At the bottom, the order on the curve labels $i$ is determined by $\bi_\bv$, defined as in (\ref{eq: order and idempotent}).  Thus the image of $F_i^{(r)}$ is the same under either path in (\ref{eq: final CD}).  Similar arguments hold for the other generators of $Y_\mu^\lambda$, so the diagram is commutative.
\end{proof}

\bibliographystyle{amsalpha}
\bibliography{symmtype}

\end{document}